\newtheorem{lemma}{Lemma}[section]
\newtheorem{theorem}{Theorem}[section]
\newtheorem{definition}{Definition}[section]
\newtheorem{proposition}{Proposition}[section]
\newtheorem{remark}{Remark}[section]
\newtheorem{corollary}{Corollary}[section]
\numberwithin{equation}{section}
\newcommand{\be}{\begin{equation}}
\newcommand{\bm}{\begin{multline}}
\newcommand{\ee}{\end{equation}}
\newcommand{\dis}{\displaystyle}
\newcommand{\R}{\mathbb{R}}
\newcommand{\N}{\mathbb{N}}
\renewcommand{\S}{\mathbb{S}}
\newcommand{\FX}{\mathbf{X}}
\newcommand{\CA}{\mathcal{A}}
\newcommand{\CB}{\mathcal{B}}
\newcommand{\CE}{\mathcal{E}}
\newcommand{\CG}{\mathcal{G}}
\newcommand{\CR}{\mathcal{R}}
\newcommand{\bu}{\mathbbm{u}}
\newcommand{\bv}{\mathbbm{v}}
\newcommand{\na}{\nabla}
\newcommand{\al}{\alpha}
\newcommand{\ga}{\gamma}
\newcommand{\om}{\omega}
\newcommand{\Om}{\Omega}
\newcommand{\la}{\lambda}
\newcommand{\de}{\delta}
\newcommand{\si}{\sigma}
\newcommand{\pa}{\partial}
\newcommand{\ka}{\kappa}
\newcommand{\eps}{\epsilon}
\newcommand{\ta}{\theta}
\newcommand{\eqdef}{\overset{\mbox{\tiny{def}}}{=}}
\begin{document}
\title[Compressible Navier-Stokes approximation for the Boltzmann equation]{Compressible Navier-Stokes approximation for the Boltzmann equation in bounded domains}

\author[R.-J. Duan]{Renjun Duan}
\address[RJD]{Department of Mathematics, The Chinese University of Hong Kong,
Shatin, Hong Kong, P.R.~China}
\email{rjduan@math.cuhk.edu.hk}

\author[S.-Q. Liu]{Shuangqian Liu}
\address[SQL]{School of Mathematics and Statistics, Central China Normal University, Wuhan 430079, and Department of Mathematics, Jinan University, Guangzhou 510632, P.R.~China}
\email{tsqliu@jnu.edu.cn}


\begin{abstract}
It is well known that the full compressible Navier-Stokes equations can be deduced via the Chapman-Enskog expansion from the Boltzmann equation as the first-order correction to the Euler equations with viscosity  and heat-conductivity coefficients of order of the Knudsen number $\eps>0$. In the paper, we carry out the rigorous mathematical analysis  of the compressible Navier-Stokes approximation for the Boltzmann equation regarding the  initial-boundary value problems in general bounded domains. The main goal is to measure the uniform-in-time deviation of the Boltzmann solution with diffusive reflection boundary condition from a local Maxwellian with its fluid quantities given by the solutions to the corresponding compressible Navier-Stokes equations with consistent non-slip boundary conditions whenever $\eps>0$ is small enough. Specifically, it is shown that for well chosen initial data around constant equilibrium states, the deviation weighted by a velocity function is $O(\eps^{1/2})$ in $L^\infty_{x,v}$ and $O(\eps^{3/2})$ in $L^2_{x,v}$ globally in time. The proof is based on the uniform estimates for the remainder in different functional spaces without any spatial regularity. One key step is to obtain the global-in-time existence as well as uniform-in-$\eps$ estimates for regular solutions to the full compressible Navier-Stokes equations in bounded domains
when the parameter $\eps>0$ is involved in the analysis.
\end{abstract}
\maketitle

\thispagestyle{empty}
\setcounter{tocdepth}{1}
\tableofcontents

\section{Introduction}

\subsection{Boltzmann equation}
The problem of ``the limiting processes, there merely indicated, which lead from the atomistic view to the laws of motion of continua" was approached by many groups of mathematicians in kinetic theory, cf.~\cite{Cer-88,CC-90,Gr-63,Ko,So,SBGS}. In this paper, we study the compressible Navier-Stokes  (CNS) hydrodynamic approximation for the Boltzmann equation in bounded domains. The Boltzmann equation in the Euler scaling reads:
\begin{equation}\label{BE}
\pa_t F+v\cdot \na_x F=\frac{1}{\eps}Q(F,F).
\end{equation}
Here, $F=F(t,x,v)$ denotes the density distribution function of the particle gas at time $t\geq0$, position $x\in \Omega$ and velocity $v\in \R^3$, $\Omega$ is a bounded domain in $\R^3$, which can be given by $\{x|\xi(x)<0\}$ with $\xi$ being a smooth function. $\eps>0$ is the Knudsen number defined as the ratio of the molecular mean free path length to a representative physical length scale.
Moreover, let $(v_\ast,v)$ and $(v_\ast', v')$ be the velocities of the particles before and after the collision respectively, which satisfy
\begin{eqnarray*}
\left\{\begin{array}{lll}
\begin{split}
&v'=v+[(v_\ast-v)\cdot\omega]\om,\ \ v_\ast'=v_\ast-[(v_\ast-v)\cdot\omega]\om,\ \om\in\S^2,\\
&|v_\ast|^2+|v|^2=|v_\ast'|^2+|v'|^2.\end{split}\end{array}\right.
\end{eqnarray*}
The Boltzmann collision operator $Q(\cdot,\cdot)$ is then given as the following non-symmetric form for hard sphere model
\begin{equation*}
\begin{split}
Q(F_1,F_2)=&\int_{\R^3\times\S^2}|(v_\ast-v)\cdot\omega|[F_1(v_\ast')F_2(v')-F_1(v_\ast)F_2(v)]dv_\ast d\omega\\
=&Q_{\textrm{gain}}(F_1,F_2)-Q_{\textrm{loss}}(F_1,F_2).
\end{split}
\end{equation*}
The equation \eqref{BE} is supplemented with the initial data
\begin{equation}\label{ID}
F(0,x,v)=F_0(x,v),\ x\in\Omega,\ v\in\R^3,
\end{equation}
and the diffuse reflection boundary condition
\begin{equation}\label{dbd}
F(t,x,v)|_{n(x)\cdot v<0}=M^w\int_{n(x)\cdot v'>0}F(t,x,v')(n(x)\cdot v')dv',\ x\in\pa\Omega,\ t\geq0,
\end{equation}
where $n(x)=\frac{\na_x\xi}{|\na_x\xi|}$ is the unit outer normal vector and
$M^w$ defined by
\begin{equation}\label{wM}
M^w=\frac{1}{2\pi\ta^2_w}e^{-\frac{|v-u_w|^2}{2\ta_w}}
\end{equation}
is a Maxwellian distribution having the wall temperature $\ta_w$ and wall velocity $u_w$. Through the paper we assume $u_w=0$ and $\ta_w=1$ at the wall. 

\subsection{
Compressible Navier-Stokes equations}

In this subsection, we make use of the classical Chapman-Enskog expansion to formally derive the full compressible Navier-Stokes equations.
For this, we first denote a local Maxwellian $M$ by
$$
M(v)=M_{[\overline{\rho},\overline{u},\overline{\ta}]}(v)=\frac{\overline{\rho}(t,x)}{(2\pi\overline{\ta}(t,x))^{3/2}}
e^{-\frac{|v-\overline{u}(t,x)|^2}{2\overline{\ta}(t,x)}},
$$
where $\overline{\rho}$, $\overline{u}$ and $\overline{\ta}$ stand for the arbitrary fluid density, velocity and temperature respectively. With this Maxwellian, one can define the following linear Boltzmann operator:
$$
-L_Mg=Q(M,g)+Q(g,M).
$$
Note that $L_M$ can be split into
$$
L_M=\nu_M-K_M(v,v_\ast),
$$
where $\nu_M$ is a multiplier, given by
$$
\nu_{M}(v)=\int_{\R^3\times \S^2}
|(v-v_\ast)\cdot\omega|M\,dv_\ast d\omega,
$$
and
$K(v,v_\ast)$ is a self-adjoint $L^2$ compact operator, defined by
$$
K_Mg=Q_{\textrm{gain}}(M,g)-Q_{\textrm{loss}}(g,M)+Q_{\textrm{gain}}(g,M).
$$
Clearly, $\nu_M\sim \langle v\rangle$ provided that $\overline{\rho}$ and $\overline{u}$ are bounded, and $\overline{\ta}$ has positive lower and upper bounds.
Moreover, $K(v,v_\ast)$ can be also presented (cf.~\cite{Gla}) as
\begin{eqnarray*}
\begin{split}
\left\{\begin{array}{rll}
\begin{split}
K_Mg=&\sqrt{M(v)}
k_{M}\left(\left(\frac{g}{\sqrt{M}}\right)(v)\right),\
k_{M}=k_{2M}-k_{1M},\\
k_{1M}g=&\int_{\R^3\times \S^2}
|(v-v_\ast)\cdot\omega|\sqrt{M(v)}\sqrt{M(v_\ast)}g(v_\ast)\,dv_\ast d\omega,\\
k_{2M}g=&\int_{\R^3\times \S^2}
|(v-v_\ast)\cdot\omega|\sqrt{M(v_\ast)}\left\{\sqrt{M(v')}g(v'_\ast)+\sqrt{M(v_\ast')}g(v')\right\}\,dv_\ast d\omega.
\end{split}
\end{array}\right.
\end{split}
\end{eqnarray*}

We now introduce a correction term
\begin{equation}\label{def.G}
\begin{split}
G(\overline{\rho},\overline{u},\overline{\ta})=-L^{-1}_MM\left\{\frac{1}{2}A(V):\si(\overline{u})
+B(V)\cdot\frac{\na_x\overline{\ta}}{\sqrt{\overline{\ta}}}\right\},
\end{split}
\end{equation}
with $$\si(\overline{u})=\na_x \overline{u}+(\na_x \overline{u})^T-\frac{2}{3}\na_x\cdot \overline{u}I,$$
and a higher order dissipation term
$$
H[\overline{\rho},\overline{u},\overline{\ta}]=MV\cdot\frac{\na_x\cdot[\mu(\overline{\ta})\sigma(\overline{u})]}{\overline{\rho}\sqrt{\overline{\ta}}}
+M\left(\frac{1}{3}|V|^2-1\right)\frac{\frac{1}{2}\mu(\overline{\ta})\sigma(\overline{u}):\sigma(\overline{u})
+\na_x\cdot[\ka(\overline{\ta})\na_x\overline{\ta}]}{\overline{\rho}\overline{\ta}},
$$
where
\begin{eqnarray*}
\left\{
\begin{array}{rllll}
\begin{split}
A(V)&=V\otimes V-\frac{1}{3}|V|^2I,\ \ B(V)=\frac{|V|^2-5}{2}V,\ \ V=\frac{v-\overline{u}}{\sqrt{\overline{\ta}}},\\
\mu(\overline{\ta})&=\frac{\overline{\ta}}{10}\int_{{\R}^3}A(V):L^{-1}_{M_{[1,\overline{u},\overline{\ta}]}}
A(V)M_{[1,\overline{u},\overline{\ta}]}>0,\\
\ka(\overline{\ta})&=\frac{\overline{\ta}}{3}\int_{{\R}^3}B(V)\cdot L^{-1}_{M_{[1,\overline{u},\overline{\ta}]}}
B(V)M_{[1,\overline{u},\overline{\ta}]}>0.
\end{split}
\end{array}
\right.
\end{eqnarray*}
Let us now take $M=M_{[\rho,u,\ta]}$ where $[\rho,u,\ta]$ satisfy
\begin{eqnarray}\label{NS}
\left\{\begin{array}{rlll}
\begin{split}
&\pa_t\rho+\na_x\cdot(\rho u)=0,\\
&\rho(\pa_tu+u\cdot \na_xu)+\na_x P=\eps\na_x\cdot\left[\mu(\ta)\sigma(u)\right],\ P=\rho\ta,\\
&\frac{3}{2}\rho(\pa_t\ta+u\cdot\na_x\ta)+P\na_x\cdot u
=\eps\na_x\cdot\left[\ka(\ta)\na_x\ta\right]
+\frac{\eps}{2}\mu(\ta)\sigma(u):\sigma(u).
\end{split}
\end{array}\right.
\end{eqnarray}
We then set
\begin{equation}\label{epn}
F=M_{[\rho,u,\ta]}+\eps G(\rho,u,\ta)+\eps^{3/2}R,
\end{equation}
and plug this into \eqref{BE} to obtain the equation for $M$:
\begin{equation}\label{M}
\pa_t M+v\cdot\na_x M+L_MG=\eps H,
\end{equation}
and the remainder equation for $R$:
\begin{equation}\label{R}
\begin{split}
\pa_tR+&v\cdot\na_xR+\frac{1}{\eps}L_MR\\
=&\eps^{1/2}Q(R,R)+Q(R,G)+Q(G,R)+\eps^{-1/2}Q(G,G)-\eps^{-1/2}(\pa_tG+v\cdot \na_xG+H).
\end{split}
\end{equation}
It is straightforward to check that \eqref{M} is equivalent to the full CNS 
equations \eqref{NS}.

To solve \eqref{NS}, we impose the initial data
\begin{equation}\label{NSid}
[\rho,u,\ta](0,x)=[\rho_0,u_0,\ta_0](x),
\end{equation}
and the Dirichlet boundary condition
\begin{equation}\label{NSbd}
{[u,\ta]\big|_{\pa\Omega}=[0,1]}.
\end{equation}

\subsection{Main results}
Define $$
M_-=M_{[1,0,1]}=\frac{1}{(2\pi)^{3/2}}e^{-\frac{|v|^2}{2}}.
$$
We now state our main results as follows:

\begin{theorem}\label{mr}
Let $w_\ell=\langle v\rangle^{\ell}=(1+|v|^2)^{\ell/2}$ with $\ell>3/2$, assume $F_0=M_{[\rho_0,u_0,\ta_0]}$,
there exists a constant $\ka_0>0$, such that
if
\begin{equation}
\label{mr.conid}
\|[\rho_0-1,u_0,\ta_0-1]\|_{\FX_\eps}\leq \ka_0\eps,
\end{equation}
with the norm $\|\cdot\|_{\FX_\eps}$ given by \eqref{Xe},
then
the initial boundary value problem \eqref{BE}, \eqref{ID} and \eqref{dbd} on the Boltzmann equation admits a unique global solution
$$
F(t,x,v)=M_{[\rho,u,\ta]}+\eps G+\eps^{3/2} R\geq0, \quad (t,x,v)\in [0,\infty)\times \Omega\times \R^3,
$$
where $[\rho,u,\ta]$ is 
is a unique global solution obtained in Theorem \ref{NSsol} for the initial boundary value problem \eqref{NS}, \eqref{NSid} and \eqref{NSbd} on the full compressible Navier-Stokes equations, $G=G(\rho,u,\ta)$ with $G(\cdot,\cdot,\cdot)$ defined in \eqref{def.G},  and the remainder $R$ satisfies
\eqref{R}. Moreover,
it holds that
\begin{equation}\label{diff}
\begin{split}
\sup_{t\geq 0}\left\|\frac{F-M_{[\rho,u,\ta]}}{\sqrt{M_-}}\right\|_2\lesssim \eps^{3/2},\quad \sup_{t\geq 0}\left\|\frac{w_\ell\left(F-M_{[\rho,u,\ta]}\right)}{\sqrt{M_-}}\right\|_\infty\lesssim \eps^{1/2} .
\end{split}
\end{equation}
\end{theorem}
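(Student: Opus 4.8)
\medskip
\noindent\emph{Proof strategy.}
The plan is to regard $[\rho,u,\ta]$ as the global solution of \eqref{NS}, \eqref{NSid}, \eqref{NSbd} furnished by Theorem~\ref{NSsol}, whose $\FX_\eps$-norm is $\lesssim\eps$ uniformly in $\eps$ (this controls $G$, $H$ and all the derivatives of $[\rho,u,\ta]$ --- including the time derivatives --- entering \eqref{R}), and then to solve the remainder equation \eqref{R} by a priori estimates on $R$ in $L^2_{x,v}\cap L^\infty_{x,v}$ carried out \emph{without any spatial regularity on $R$}, closed by a continuity argument. Two preliminary reductions come essentially for free. First, the ansatz \eqref{epn} together with $F_0=M_{[\rho_0,u_0,\ta_0]}$ forces $R(0,x,v)=-\eps^{-1/2}G(\rho_0,u_0,\ta_0)$, which by \eqref{def.G} and the smallness \eqref{mr.conid} is $O(\eps^{1/2})$ in both $L^2_{x,v}$ and the $w_\ell$-weighted $L^\infty_{x,v}$. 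Second, since $u_w=0$, $\ta_w=1$, a direct computation shows that $M_{[\rho,u,\ta]}$ itself satisfies the diffuse reflection law \eqref{dbd} on $\pa\Om$ as soon as $[u,\ta]|_{\pa\Om}=[0,1]$ (which holds by \eqref{NSbd}); hence $R$ satisfies the diffuse reflection boundary condition up to an inhomogeneous source of size $O(\eps^{1/2})$ coming from $\eps G$. Throughout one normalizes $f=R/\sqrt{M_-}$, splits $L_M=\nu_M-K_M$ with $\nu_M\sim\langle v\rangle$, and uses that $L_M$ is an $O(\eps)$-perturbation of $L_{M_-}$ uniformly in $(t,x)$.

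\medskip
\noindent\emph{The $L^2$ estimate.}
Pairing \eqref{R} with $R/M_-$ over $\Om\times\R^3$, using the coercivity of $L_M$ (which yields a micro-dissipation $\tfrac c\eps\|(\FI-\FP_M)f\|_\nu^2$, with $\FP_M$ the local hydrodynamic projection), and using that the boundary integral produced by diffuse reflection is non-negative up to the $O(\eps^{1/2})$ inhomogeneity --- via the standard Ukai-type trace inequality --- one obtains a differential inequality of the form
\[
\frac{d}{dt}\,\|f(t)\|_2^2+\frac c\eps\,\|(\FI-\FP_M)f\|_\nu^2+(\text{boundary dissipation})\ \lesssim\ \eps^{1/2}\|f\|_2+\eps\|f\|_\nu^2+\eps^{1/2}\|w_\ell f\|_\infty\|f\|_\nu^2 .
\]
Here $\eps^{-1/2}(\pa_tG+v\cdot\na_xG+H)$ and $\eps^{-1/2}Q(G,G)$ are $O(\eps^{1/2})$ in $L^2_{x,v}$ precisely because of the uniform derivative bounds of Theorem~\ref{NSsol}; $Q(R,G)+Q(G,R)$ is $O(\eps)\|f\|_\nu^2$; and $\eps^{1/2}Q(R,R)$ is controlled by the continuity hypothesis on $\|w_\ell f\|_\infty$. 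Since the plain energy identity does not see $\FP_M f$, it must be supplemented with a Kawashima-type interaction functional built from the macroscopic moment equations that $\FP_M f$ obeys --- obtained by testing \eqref{R} against $\sqrt M$, $v_i\sqrt M$, $|v|^2\sqrt M$, and exploiting that $G$ has vanishing collision-invariant moments --- so as to generate genuine dissipation of $\FP_M f$ without differentiating in $x$; the resulting errors are absorbed by the $\tfrac1\eps$ micro-dissipation and the smallness of $\eps$. The output is a bound uniform in time and in $\eps$: $\sup_{t\ge0}\|f(t)\|_2+\big(\tfrac1\eps\int_0^\infty\|(\FI-\FP_M)f\|_\nu^2\,dt\big)^{1/2}\lesssim\|f(0)\|_2+\eps^{1/2}\lesssim\eps^{1/2}$.

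\medskip
\noindent\emph{The $L^\infty$ estimate and closing.}
Set $h=w_\ell f$. Writing \eqref{R} in mild (Duhamel) form along backward characteristics and tracking reflections off $\pa\Om$ through the diffuse boundary operator (the stochastic-cycle / back-time bouncing analysis), one exploits the strong decay $e^{-\frac c\eps\langle v\rangle(t-s)}$ coming from $\tfrac1\eps\nu_M$ and the fact that the diffuse boundary map is a contraction in $w_\ell$-weighted $L^\infty$ (the reflected part is $\propto M^w$, which dominates $w_\ell^{-1}$). Iterating the Duhamel formula twice converts the $\tfrac1\eps K_M h$ coupling into a space-velocity integral which, via the kernel estimates for $k_M$ and a change of variables tuned to the $\eps$-scaling so that the gain from $\tfrac1\eps\nu_M$ offsets the loss from $\tfrac1\eps K_M$, is dominated by $\sup_s\|f(s)\|_2$. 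One arrives at $\sup_{t\ge0}\|h(t)\|_\infty\lesssim\|h(0)\|_\infty+\eps^{-a}\sup_{t\ge0}\|f(t)\|_2+\eps^{1/2}+(\text{small})\sup_t\|h\|_\infty+\eps^{1/2}\big(\sup_t\|h\|_\infty\big)^2$ for some fixed $a\le1$, hence a bound on $\sup_{t\ge0}\|h(t)\|_\infty$ that is uniform in $\eps$ up to the factor $\eps^{-a}$. A standard local existence theory for \eqref{R} with this inhomogeneous diffuse boundary condition, combined with the a priori estimates above, closes the continuity argument and produces the global $R$; $F\ge0$ then follows by realizing $F$ as the limit of a positivity-preserving approximation of the Boltzmann IBVP issued from $F_0=M_{[\rho_0,u_0,\ta_0]}>0$. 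Finally \eqref{diff} is read off from $F-M_{[\rho,u,\ta]}=\eps G+\eps^{3/2}R$ using $G=O(\eps)$ together with the $L^2$ and weighted $L^\infty$ bounds just obtained for $R$.

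\medskip
\noindent\emph{Expected main obstacle.}
The core difficulty is the low-regularity, bounded-domain control of the remainder. The diffuse reflection boundary term in the $L^2$ identity is only \emph{almost} non-negative, so a sharp trace estimate and honest control of the $O(\eps^{1/2})$ boundary inhomogeneity generated by $\eps G$ are required. More seriously, recovering dissipation for the hydrodynamic part $\FP_M f$ of the remainder with no spatial derivatives available, and uniformly in $\eps$, forces a delicate use of the macroscopic moment equations and boundary-adapted interaction functionals. And the singular prefactor $\tfrac1\eps$ in front of $L_M$ makes the $L^\infty$ step subtle: the decay from $\tfrac1\eps\nu_M$ must be played off against the amplification from $\tfrac1\eps K_M$ in the double-Duhamel iteration, which is where the $\eps$-dependence of all constants is ultimately pinned down. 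Underlying all of this, Theorem~\ref{NSsol} must supply every norm of $[\rho,u,\ta]$ needed to estimate $\pa_tG+v\cdot\na_xG+H$ --- in $L^2$ and in weighted $L^\infty$ --- uniformly in $\eps$, which is precisely why establishing that theorem (a global uniform-in-$\eps$ regularity theory for the full CNS system in a bounded domain) is singled out as a key step.
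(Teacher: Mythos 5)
Your overall architecture is the right one (solve the CNS system with uniform-in-$\eps$ regularity via Theorem \ref{NSsol}, then close $L^2$ and weighted $L^\infty$ a priori estimates for the remainder with no spatial regularity, using the stochastic-cycle/double-Duhamel analysis for $L^\infty$ and a continuity/iteration argument), and your observation that $M_{[\rho,u,\ta]}$ itself satisfies the diffuse reflection law on $\pa\Om$ so that the boundary inhomogeneity $r=P_\ga G-G$ enters only at order $\eps^{-1/2}r=O(\eps^{1/2})$ is exactly what the paper uses. However, there are two genuine gaps where your closure would fail. First, your $L^2$ step overstates what the source terms allow: $\eps^{-1/2}(\pa_tG+v\cdot\na_xG+H)$ involves $\na_x^2[u,\ta]$, which Theorem \ref{NSsol} controls only as $\eps\|\na_x^2[u,\ta]\|_2\lesssim\ka_0\eps$ pointwise in time (i.e.\ $O(1)$, not $O(\eps)$); only its \emph{time integral} $\eps\int_0^t\|\na_x^2[u,\ta]\|^2ds$ is small. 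Consequently $\sup_t\|R/\sqrt{M_-}\|_2$ is only $O(\ka_0)$, not $O(\eps^{1/2})$, and the macroscopic part satisfies merely $\int_0^t\|P_0^MR\|_2^2ds\sim O(\eps^{-1})$ (cf.\ \eqref{ad.mawd}). This singularity is fatal for your treatment of the trilinear term: bounding $\eps^{1/2}\int_0^t(Q(R,R),P_1^MR/M_-)$ by $\|w_\ell f\|_\infty\cdot\|f\|_\nu^2$ produces $\eps^{1/2}\cdot\eps^{-1}\cdot\eps^{-1}=\eps^{-3/2}$ and cannot be absorbed. The paper's resolution is the $L^6$--$L^3$ splitting of the macroscopic contribution, which requires two ingredients absent from your proposal: the $L^6$ bound on $P_0^MR$ obtained by the dual test-function/elliptic argument with $-\Delta\varphi_{c,6}=c^5$ and $W^{2,6/5}\hookrightarrow$ estimates (Lemma \ref{l2-l6}), and the $L^2_tL^3_x$ bound on $P_0^MR$ obtained from the velocity averaging lemma after a boundary-adapted extension of $R$ (Lemmas \ref{l3lem}--\ref{l3lem2}). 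A Kawashima-type interaction functional alone, as you propose, does not produce these higher-integrability bounds.

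Second, your $L^\infty$ closure is quantitatively insufficient. The double Duhamel iteration with a plain $L^2$ upper bound yields $\eps\|h\|_\infty\lesssim\eps^{-1/2}\sup_t\|R/\sqrt{M_-}\|_2+\cdots$ (this is \eqref{upl6-l3}), and since $\sup_t\|R\|_2=O(1)$ this gives only $\eps\|h\|_\infty\lesssim\eps^{-1/2}$, which does not close the nonlinear estimates (one needs $\eps\|h\|_\infty\lesssim1$). The paper instead rearranges the upper bound as $\eps^{1/2}\sup_t\|P_0^MR\|_6+\eps^{-1/2}\sup_t\|P_1^MR\|_2$ (cf.\ \eqref{upl2}), where the first term is controlled by the $L^6$ lemma and the second requires converting the time-integrated micro-dissipation $\eps^{-1}\int_0^t\|P_1^MR\|_2^2ds=O(1)$ into a supremum in time; this is done via \eqref{eps-1eng} and forces the inclusion of $\pa_tR$ in the energy functional, together with the commutator $[P_1^M,\pa_t]$ and the estimates \eqref{point3} for $\pa_tR$ in $L^\infty$. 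Your proposal contains no time-derivative estimates and no $P_0^M/P_1^M$ splitting of the $L^\infty$ upper bound, so the fixed exponent ``$a\le1$'' you posit cannot be realized and the continuity argument does not close as written.
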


\begin{remark}
It is a common view  that the temporal derivatives of the initial datum such as $\pa_t R_0$ and $\pa_t[\rho_0,u_0,\ta_0]$ are determined by the time
evolution equations of $R$ and $[\rho,u,\ta]$ respectively, for instance, $\pa_t R_0$ is understood by the equation \eqref{R}.

\end{remark}

\begin{remark}\label{mr.rm2}
Note that \eqref{NSbd} is matched with \eqref{wM} for the velocity $u_w=0$ and temperature $\ta_w=1$ at the wall, and also the initial data $F_0$ is chosen to be a local Maxwellian $M_{[\rho_0,u_0,\ta_0]}$ whose fluid quantities $[\rho_0,u_0,\ta_0]$  is consistent with initial data for the CNS equations and close to the constant state $[1,0,1]$ up to order of $\eps$ in terms of the norm $\FX_\eps$. Therefore, these restrictions retrain from the appearance of the Knudsen layer.
\end{remark}


The result of Theorem \ref{mr} clarifies how close to each other in $L^2$ and $L^\infty$ settings with respect to the small Knudsen number $\eps>0$ the Boltzmann solution and the CNS solution are.
Although it is well known that the Boltzmann equation can be approximated by the CNS equations via the Chapman-Enskog expansion up to first order (cf.~\cite{Cer-88,CC-90}), it seems that  Theorem \ref{mr} provides the first rigorous justification to this global-in-time approximation regarding the IBVPs of kinetic and compressible fluid equations in general bounded domains. In the mean time, we have also obtained in Theorem \ref{NSsol} the global-in-time existence as well as uniform-in-$\eps$ estimates for regular solutions to the CNS equations in bounded domains, and this result has its own interest even in the context of the compressible fluid dynamical equations.

\subsection{Literature}
The hydrodynamic limit or approximation of the Boltzmann equation is a fundamental topic in kinetic theory. There have been extensive mathematical studies on this subject. We may refer readers to the Cercignani's book \cite[e.g.~Chapter V]{Cer-88} for the history of the problem, to the survey book \cite{SR} by Saint-Raymond for the detailed presentation of limits of the Boltzmann equation to either the incompressible Navier-Stokes and Euler equations or the compressible Euler equations, and to two recent works \cite{EGKM-15,LYZ} as well as references therein for more up-to-date results on the topic. In what follows we review some existing works in the literature that are most related to the topic of this paper in connection with the CNS approximation to the Boltzmann equation.

First of all, for the IBVP on the Boltzmann equation \eqref{BE} in bounded domains, the unique global-in-time solution around global Maxwellians in the $L^\infty$ setting has been constructed by Guo \cite{Guo-2010} for different types of boundary conditions including the diffusive-reflection condition \eqref{dbd} and for angular cutoff hard potentials. As for the IBVP \eqref{NS} and \eqref{NSbd} on the corresponding CNS in an unbounded domain which is either the half-space or exterior to a bounded domain, the global existence of smooth solutions around constant states was established by Matsumura and Nishida \cite{Matsumura-Nishida-1983} through the energy method developed in their previous pioneering  work \cite{MN-80} for the whole space. The case of the bounded domain has been studied in \cite{MN-82}; see also \cite{VZ} for the problem on existence and stability of periodic and stationary solutions.

For the CNS approximation to the Boltzmann equation, there have been a few mathematical results in different situations, for instance, \cite{DE-96,ELM-94,ELM-95,KMN, La-92,LYZ}; see also \cite{BLM} and references therein for the numeric method solving the Boltzmann equation by the CNS approximation. Specifically, Kawashima, Matsumura and Nishida \cite{KMN} proved that the Boltzmann solution is asymptotically equivalent in large time to the CNS solution for the Cauchy problems in the whole space when initial data for both problems are sufficiently close to constant equilibrium states in smooth Sobolev spaces. In spirit of a previous work \cite{La-87}, Lachowicz \cite{La-92}  initiated a study of the CNS approximation including the initial layer to the Boltzmann equation in spatially periodic domains. Without considering the initial layer, the result of \cite{La-92} has been recently extended by the second author of the paper together with Yang and Zhao \cite{LYZ}  to the more general situations. The main differences between \cite{LYZ} and \cite{La-92}  have been pointed out in Remark 1.1 in the latter paper. Particularly, the error estimates obtained in the former is uniform in all time $t>0$ and also for small enough $\eps>0$, and the CNS solution is proved to exist and satisfy some properties required for the analysis of the remainder equation. One of the main goals in this paper is to make use of the techniques developed in \cite{Masmoudi-Rousset-2017,Wa-16}  in order to extend the result in \cite{LYZ} related to the CNS in periodic domains to the case of general bounded domains, see Theorem \ref{NSsol} in Section \ref{sec2} as mentioned before.

In the one-dimensional case, Esposito, Lebowitz and Marra \cite{ELM-94} studied the steady solution of the Boltzmann equation in a slab with a constant external force of order $\eps$ parallel to the boundary and with complete accommodation at the walls, and they showed that for any small enough force, the Boltzmann solution can be approximated by the steady CNS solution with non-slip boundary condition up to $3/2$ order of $\eps$ in $L^\infty$ norm. The result of \cite{ELM-94} was extended later by the same authors \cite{ELM-95} to allow the temperature difference on two ends of the slab and also by Di Meo and Esposito \cite{DE-96} to the case of general hard potentials. It could be interesting to further extend those results \cite{DE-96,ELM-94,ELM-95} to the steady problem on the Boltzmann equation in general bounded domains. Note that in the time-dependent situation under consideration of this paper, the large-time behavior of the Boltzmann solution is trivial and hence it is hard to see if the developed approach in the paper can be directly adopted to treat the CNS approximation to the stationary Boltzmann equation with non-trivial solutions in some physical situations such as with a variable wall temperature \cite{EGKM-13} or a small external force \cite{ELM-94,EGKM-15}.

Another important issue for justifying the more accurate CNS approximation in bounded domains is concerned with the suitable choice of the boundary conditions for the CNS equations \eqref{NS}. In the current work, the non-slip boundary condition \eqref{NSbd} has been used. However, to provide the correct overall solution to the Boltzmann equation, it is generally common to supplement the CNS equations with the slip boundary conditions, cf.~\cite{Cer-88,Ko}. Derivation of slip boundary conditions for the CNS equations from the Boltzmann equation at the kinetic level was first made by Coron \cite{Co} with the explicit computations of slip coefficients. This problem has been very recently revisited by Aoki et al.~\cite{ABHK} in a systematic way. Particularly, those slip boundary conditions are essentially the consequence of the analysis of the Knudsen layer. Interested readers may also refer to \cite{AKFG} for the application of their approach to a specific problem. It should be an interesting and challenging problem to extend the current result to the case with such slip boundary conditions for the CNS equations by introducing the extra Knudsen layer correction around the boundary, cf.~\cite{Ca,SBGS}, for instance.

Lastly, as related to the current work, we again mention \cite{EGKM-15} for the hydrodynamic limit to the incompressible Navier-Stokes equations for the stationary Boltzmann equation in bounded domains in the presence of a small external force field and a small boundary temperature variation for the diffusive reflection boundary condition. In this paper we shall adopt some techniques developed in \cite{EGKM-15} (see also \cite{Gu16}) to treat the uniform estimates on the remainder.

\subsection{Strategy of the proof}

For convenience, we sketch the proof of Theorem \ref{mr} in the following two parts basing on solving two initial boundary value problems on the CNS equations and the remainder equation.

\medskip
\noindent{\it Part I. Global existence and uniform regularity of the CNS solution $[\rho,u,\ta]$}. Regarding global existence of classical solutions to the full CNS equations around constant states, we have mentioned the known results \cite{MN-80,MN-82,Matsumura-Nishida-1983,VZ} in the case of $\R^3$, half-space, exterior domain or bounded domain.
However, for the problem in bounded domains, when the viscosity and heat-conductivity coefficients depend on the parameter $\eps$, it is non-trivial to obtain the uniform-in-$\eps$ high-order regular solution globally in time, due to the supplemented non-slip boundary condition as well as the weak hyperbolic-parabolic dissipation structure of the system. To overcome such difficulty, we shall employ the techniques in \cite{Masmoudi-Rousset-2017} and \cite{Wa-16} by introducing the energy norm $\FX_\eps$ with the conormal derivatives with respect to time and space variables. Compared to the incompressible setting where the regularity of solutions can be gained via the elliptic estimates (cf.~\cite{Masmoudi-Rousset-2017}), we have to additionally treat the uniform estimates on the hyperbolic component $\rho$ in the compressible situation. Moreover, different from the isentropic case, appearance of the temperature equation in the full CNS system induces an extra difficulty, cf.~\cite{Wa-16}.

Precisely, we are able to derive the following uniform estimates
\begin{equation*}
\begin{split}
\sup\limits_{0\leq s\leq t}&
\sum\limits_{\al_0\leq 3}\left\|\pa_t^{\al_0}[\rho-1,u,\ta-1](s)\right\|_2^2
+\sup\limits_{0\leq s\leq t}\sum\limits_{\al_0\leq 2}\left\|\pa_t^{\al_0}\na[\rho,u,\ta](s)\right\|_2^2
\\&+\sup\limits_{0\leq s\leq t}\sum\limits_{\al_0\leq 2}\eps^2\left\|\pa_t^{\al_0}\na^2\rho(s)\right\|_2^2
+\sup\limits_{0\leq s\leq t}\sum\limits_{\al_0\leq 1}\eps^2\left\|\pa_t^{\al_0}\na^2[u,\ta](s)\right\|_2^2
\\&+\sup\limits_{0\leq s\leq t}\sum\limits_{\al_0\leq 3,|\al|=2}\eps\left\|\pa_t^{\al_0}Z^{\al}\left[\widetilde{\rho},u,\widetilde{\ta}\right](s)\right\|_2^2
+\sup\limits_{0\leq s\leq t}\sum\limits_{\al_0\leq 2}\eps^2\left\|\pa_t^{\al_0}\na_x[\rho,u,\ta](s)\right\|_{H_{co}^{2}}^2
\\&+\sup\limits_{0\leq s\leq t}\sum\limits_{\al_0\leq 2}\eps^4\left\|\pa_t^{\al_0}\na^2\rho\right\|_{H_{co}^{2}}^2
+\sup\limits_{0\leq s\leq t}\sum\limits_{\al_0\leq 1}\eps^4\left\|\pa_t^{\al_0}\na^2[u,\ta]\right\|_{H_{co}^{2}}^2,
\end{split}
\end{equation*}
under the a priori assumption $N(\rho,u,\theta)\lesssim \eps^2$ with
\begin{equation*}
\begin{split}
N(\rho,u,\ta)(t)\eqdef &\|[\rho-1,u,\ta-1](t)\|^2_{\FX_{\eps}}
 +\eps\sum\limits_{\al_0\leq3}\int_0^t\|\pa_t^{\al_0}\na_x[u,\ta]\|^2_{2}ds
 +\eps\sum\limits_{\al_0\leq2}\int_0^t\|\pa_t^{\al_0}\na_x \rho\|^2_{2}ds
 \\&+\eps\sum\limits_{1\leq\al_0\leq3}\int_0^t\|\pa_t^{\al_0}\rho\|^2_{2}ds
 +\eps\sum\limits_{\al_0\leq2}\int_0^t\|\pa_t^{\al_0}\na^2_x[\rho,u,\ta]\|^2_{2}ds
+\eps^2\sum\limits_{1\leq\al_0\leq3}\int_0^t\|\pa_t^{\al_0}\rho\|^2_{H^2_{co}}ds
  \\& +\eps^2\sum\limits_{\al_0\leq 2}\int_0^t\left\|\pa_t^{\al_0}\na_x\widetilde{\rho}(s)\right\|_{H^2_{co}}^2ds
 +\eps^2\sum\limits_{\al_0\leq 3}\int_0^t\left\|\pa_t^{\al_0}\na_x\left[u,\widetilde{\ta}\right](s)\right\|_{H^2_{co}}^2ds
 \\&+\eps^3\sum\limits_{\al_0\leq 2}\int_0^t\left\|\pa_t^{\al_0}\na^2_x\left[\widetilde{\rho},u,\widetilde{\ta}\right](s)\right\|_{H^2_{co}}^2ds
+\eps^3\sum\limits_{\al_0\leq1}\int_0^t\|\pa_t^{\al_0}\na^3_x[u,\ta]\|_{H^2_{co}}^2ds.
\end{split}
\end{equation*}
Therefore, we have to require the condition \eqref{mr.conid}.  As mentioned in Remark \ref{mr.rm2}, to remove such restriction could make it necessary to include the analysis of the Knudsen layer as well as the possible switch of the non-slip boundary condition to the appropriate slip boundary condition as suggested in \cite{ABHK}.  

To deduce the above uniform estimates, we first obtain the zero-order energy estimate \eqref{0engp2} which also involves the pure time derivatives up to the third order. The energy estimate \eqref{1drho} for the first-order spatial derivatives of $[\rho,u,\theta]$ are subtle to obtain, and they are treated in a special way by the Galerkin method together with the Helmholtz decomposition (see \eqref{def.helmholtz} as well as Lemma \ref{Hdec}) and elliptic estimates. Similarly, we also control the energy norm $\|\nabla_x^2\rho\|_2$ as in \eqref{2drhop2} with the coefficient $\eps^2$. In the last step, we perform the same energy estimates by first acting the conormal differentiation $Z^\alpha$ $(|\al|\leq 2)$ and conclude the proof.

\medskip
\noindent{\it Part II. Global existence and uniform estimates of the remainder $R$}. We extend the main ideas of \cite{EGKM-15,Gu-06,Gu16} for treating the incompressible hydrodynamic limit to the setting of the compressible fluid approximation under consideration. Recall \eqref{epn} for the expansion of the Boltzmann solution $F$. As the background profile is a local Maxwellian $M_{[\rho,u,\ta]}$ with $[\rho,u,\ta]$ chosen as the CNS solution, the analysis for obtaining the uniform estimates on the remainder $R$ is hard to carry out, for instance, the linearized Boltzmann operator is around $M_{[\rho,u,\ta]}$, no longer a global Maxwellian. We may adopt ideas in \cite{LYY,Liu-Yang-Yu-Zhao-2006} with the macro-micro decomposition to treat the energy estimates. Moreover, the technique for $L^\infty$ estimates in terms of the characteristic approach as in \cite{Guo-2010} has to be modified to consider the effect of the local Maxwellian, see also \cite{GJ}.

Before going to the details of the proof, we explain a little the reason of choosing the expansion \eqref{epn}, particularly $3/2$-order of $\eps$ as a coefficient of the remainder $R$. In fact, the first two terms $M_{[\rho,u,\ta]}+G$ is natural, corresponding to the Chapman-Enskop solution for the CNS system \eqref{NS}. A general form for the remainder should be taken as $\eps^\beta R$ with $\beta>1$ to be determined so that the Boltzmann solution can be approximated by the CNS solution up to $O(\eps^\beta)$ in $L^\infty$ setting. Once we plug the ansatz into the Boltzmann equation \eqref{BE}, we obtain
\begin{equation*}
\begin{split}
\pa_tR+&v\cdot\na_xR+\frac{1}{\eps}L_MR\\
=&\eps^{\beta-1}Q(R,R)+Q(R,G)+Q(G,R)+\eps^{1-\beta}Q(G,G)-\eps^{1-\beta}(\pa_tG+v\cdot \na_xG+H),
\end{split}
\end{equation*}
with the boundary condition $R_-=P_\ga R+\eps^{1-\beta} [P_\ga G-G]$. Hence, the nonlinear term $\eps^{\beta-1}Q(R,R)$ is singular for small $\beta$ while the inhomogeneous source term $\eps^{1-\beta}(\pa_tG+v\cdot \na_xG+H)$ is singular for large $\beta$. The interplay between different function spaces to bound $R$ determines that $\beta=3/2$ is a suitable choice for obtaining the uniform-in-$\eps$ estimates. This in turn yields that the boundary source term $\eps^{1-\beta} [P_\ga G-G]$ must be singular in $\eps$. Fortunately, from the proof the boundary singularity turns out to be controlled.

Now we shall sketch the proof of Theorem \ref{mr}, particularly the validity of the uniform error estimate \eqref{diff} implying that the norm $\|R/{M}_-^{1/2}\|_{L^\infty}$ should be singular of order $\eps^{-1}$. Keep in mind from the natural energy inequality that
$$
\int_0^t \left\|\frac{P_1^M R}{\sqrt{M}}\right\|_2^2ds\sim O(\eps),
$$
and further in terms of the formal elliptic estimates that
\begin{equation}
\label{ad.mawd}
\int_0^t \left\|\frac{P_0^M R}{\sqrt{M}}\right\|_2^2ds\sim O(\eps^{-1}),
\end{equation}
namely, the time integration of the micro energy dissipation is of order $\eps$ while the time integration of the macro energy dissipation is singular of order $\eps^{-1}$. Starting from estimates on the pointwise bound of $R(t,x,v)$ as in \cite{EGKM-15}, we can make use of the technique of the $L^2$-$L^\infty$ interplay. However, even at the linear level, if the pure $L^2$ norm was used as an upper bound, such $L^2$ bound involves a strong singularity in $\eps$, see \eqref{upl6-l3}. Instead, we may decompose $R=P_0^MR+P_1^MR$ and rearrange the upper bound as the combination of $L^6$ norm of the macro component $P_0^MR$ and $L^2$ norm of the micro component $P_1^MR$ distributed by the different orders of $\eps$ in the form of
$$
\eps \sup_{0\leq s\leq t} \left\|\frac{R(s)}{\sqrt{M}}\right\|_\infty\lesssim \eps^{1/2} \sup_{0\leq s\leq t} \left\|\frac{P_0^M R(s)}{\sqrt{M_-}}\right\|_6+\eps^{-1/2} \sup_{0\leq s\leq t} \left\|\frac{P_1^M R(s)}{\sqrt{M_-}}\right\|_2+\cdots,
$$
see \eqref{upl2} where we have multiplied the inequality by $\eps$. Here, the second term on the right can be estimated as
$$
\eps^{-1} \sup_{0\leq s\leq t} \left\|\frac{P_1^M R(s)}{\sqrt{M_-}}\right\|_2^2\leq \eps^{-1}  \left\|\frac{P_1^M R(0)}{\sqrt{M_-}}\right\|_2^2+2\eps^{-1}\int_0^t\left\|\frac{P_1^M R(s)}{\sqrt{M_-}}\right\|_2^2\left\|\frac{\pa_tP_1^M R(s)}{\sqrt{M_-}}\right\|_2^2 ds,
$$
provided that the right-hand time integral is suitably controlled by $O(1)$. To deal with the $L^6$ norm of $P_0^MR$, we use the dual argument as in \cite{Gu16}. Formally, by the Sobolev's inequality $W^{2,6/5}\hookrightarrow L^2$ and elliptic estimates, we can show that
\begin{equation}
\label{ad.L6}
 \eps^{1/2} \sup_{0\leq s\leq t} \left\|\frac{P_0^M R(s)}{\sqrt{M_-}}\right\|_6\lesssim \eps^{-1/2} \sup_{0\leq s\leq t} \left\|\frac{P_1^M R(s)}{\sqrt{M_-}}\right\|_2+\eta \eps  \sup_{0\leq s\leq t}  \left\|\frac{R(s)}{\sqrt{M_-}}\right\|_\infty+\cdots,
\end{equation}
see \eqref{P1-P6}. It remains to bound the energy norm of $R$ at the nonlinear level. To do so, the most key point is to treat the trilinear term
$$
\eps^{1/2} \int_0^t \left(Q(R,R),\frac{P_1^M R}{\sqrt{M_-}}\right)ds.
$$
To bound the above term from the macro contribution, instead of the usual way by the product of $L^2$ and $L^\infty$ norms which actually fails due to the strong $\eps$-singularity as in \eqref{ad.mawd}, we use the $L^6$-$L^3$ estimates to bound it by
$$
\eta \eps^{-1}\int_0^t \left\|\frac{P_1^M R}{\sqrt{M_-}}\right\|_2^2ds+C_\eta \eps^2\int \left\|\frac{P_0^M R}{\sqrt{M_-}}\right\|_6^2 \left\|\frac{P_0^M R}{\sqrt{M_-}}\right\|_3^2ds.
$$
Recall by \eqref{ad.L6} that the weighted $L^6$ norm of $P_0^M R$ is of order $\eps^{-1/2}$. Thus, one has to verify that
$$
\int_0^t \left\|\frac{P_0^M R}{\sqrt{M_-}}\right\|_3^2ds\sim O(\eps^{-1}).
$$
The rigorous proof for the above estimate is based on the velocity average lemma. In the case of the whole space of three dimensions, $L^3$ bound is a  consequence of the critical Sobolev imbedding $H^{1/2}\hookrightarrow L^3$. However, we have to treat additional difficulties for the bounded domain.



\subsection{Organization of the paper}

The rest of this paper is organized as follows. In Section \ref{sec2} we study the initial-boundary value problem on the CNS system \eqref{NS}, \eqref{NSid} and \eqref{NSbd}. In Sections 3, 4, and 5 we make series of estimates on the remainder $R$. Section 6 is devoted to proving Theorem \ref{mr}. Some basic estimates used in those sections are collected in the appendix in Section 7.

\medskip
\noindent{\it Notations.}
We now list some notations  used in the paper.
 \begin{itemize}
 \item
 Throughout this paper,  $C$ denotes some generic positive (generally large) constant and $\la$ denotes some generic positive (generally small) constants, where $C$ and $\la$  may take different values in different places. $D\lesssim E$ means that  there is a generic constant $C>0$
such that $D\leq CE$. $D\sim E$
means $D\lesssim E$ and $E\lesssim D$.
\item Let $1\leq p\leq \infty$, we denote $\Vert \,\cdot \,\Vert _{p }$ the $L^{p }(\Omega
\times \R^{3})-$norm or the $L^{p }(\Omega )-$norm or $L^{p }(\Omega\cup\ga )-$norm,
sometimes, we use $|\,\cdot \,|_{\infty }$ to denote either the $L^{\infty }(\partial \Omega
\times \R^{3})-$norm or the $L^{\infty }(\partial \Omega )-$norm at
the boundary. Moreover, 
$(\cdot,\cdot)$ denotes the $L^{2}$ inner product in
$\Omega\times {\R}^{3}$  with
the $L^{2}$ norm $\|\cdot\|_2$ and $\langle\cdot\rangle$ denotes the $L^{2}$ inner product in $\R^3_v$.

\item As to
the phase boundary integration, we denote $d\gamma = |n(x)\cdot v|dS_xdv$,
where $dS_x$ is the surface element and for $1\leq p<+\infty$, we define $|f|_p^p = \int_{\gamma}
|f(x,v)|^p d\gamma \equiv\int_{\gamma} |f(x,v)|^p $ and the corresponding
space as $L^p(\partial\Omega\times\R^3;d\gamma)=L^p(\partial\Omega%
\times\R^3)$. Furthermore $|f|_{p,\pm}= |f \mathbf{1}_{\gamma_{\pm}}|_p$
and $|f|_{\infty,\pm}= |f \mathbf{1}_{\gamma_{\pm}}|_{\infty}$. 
We also denote $f_{\pm}=f_{\gamma_{\pm}}=f\mathbf{1}_{\gamma_{\pm}}$ and $f_\ga=f\mathbf{1}_{\gamma}$.

\end{itemize}

\section{Solutions of the compressible Navier-Stokes equations}\label{sec2}

This section is devoted to obtaining the existence and most importantly to deducing the higher regularity of the solutions of the compressible Navier-Stokes system \eqref{NS}, \eqref{NSid} and \eqref{NSbd}.

It should be pointed out that it is extremely difficult to obtain the uniform higher regularity of the solutions of the system \eqref{NS}, \eqref{NSid} and \eqref{NSbd} due to the weak dissipation on the right hand side and the non-slip boundary condition, which is quite different from the incompressible case, where the standard elliptic estimates can be directly adopted to gain the regularity of the solutions, cf. \cite{Masmoudi-Rousset-2017}. To settle this problem, it is convenient to introduce the
so-called conormal derivatives.

Since $\pa\Omega$ is compact, one can find finitely many points $x_i^0\in\pa\Om$, radii $r_i>0$, corresponding sets $\Om_i=\Om\cap B^0(x_i^0,r_i)$ and smooth functions $\phi_i\in C^k(\overline{\Om}_i)$ $(i=1,2,\cdots,m, k\geq6)$ such that $\pa\Om\subset \cup_i^m B^0(x_i^0,r_i)$ and
$$
\Om_i=\{x\in B^0(x_i^0,r_i)|x_3>\phi_i(x_1,x_2)\}, \ m\geq i\geq1.
$$
In what follows, we omit the subscript $i$ of $\phi_i$ for notational simplicity. Using this, we now change coordinates so as to flatten out the boundary. To be more specific, we define
\begin{equation*}
	\Phi:~(y,z)\longmapsto (y,\phi(y)+z)=x.
\end{equation*}
Denote $e_{y^1}=(1,0,\partial_1\phi)^T$, $e_{y^2}=(0,1,\partial_2\phi)^T$ and $e_{z}=(0,0,-1)^T$, one sees that
 $(e_{y^1},e_{y^2},e_z)$ is a local basis around the boundary. We emphasize that $e_{y^1}$ and $e_{y^2}$ on the boundary are tangent to $\partial\Omega$, and in general, $e_z$ is not a normal vector field. We now define
\begin{equation*}
	 Z_i=\partial_{y^i}=\partial_i-\partial_i\phi\partial_z,~i=1,2, ~~Z_3=\varphi(z)\partial_z,
\end{equation*}
where $\varphi(z)=\frac {z}{1+z}$ is smooth, supported in $\mathbb{R}_+$ with the property $\varphi(0)=0$,
$\varphi'(0)>0$, $\varphi(z)>0$ for $z>0$. It is easy to check that
\begin{equation*}
	Z_kZ_j=Z_jZ_k,~~j,~k=1,2,3
\end{equation*}
and
\begin{equation*}
	 \partial_zZ_i=Z_i\partial_z,~i=1,2,~~\mbox{and}~~\partial_zZ_3\neq Z_3\partial_z.
\end{equation*}
Now the unit outward normal $n(x)$ can be equivalently given by
\begin{equation*}
	n(x)\equiv n(\Phi(y,z))=\frac{1}{\sqrt{1+|\nabla\phi(y)|^2}}\left(\begin{array}{cccc} &\partial_1\phi(y)\\&\partial_2\phi(y)\\&-1\end{array}\right)\eqdef \frac{-N(y)}{\sqrt{1+|\nabla\phi(y)|^2}}.
\end{equation*}

We now define the following Sobolev conormal derivatives
\begin{equation*}
	 Z^\al=Z_1^{\al_{1}}Z_2^{\al_{2}}Z_3^{\al_{3}}.
\end{equation*}
where $\al$ is a  multi-index with $\al=(a_{1},\al_{2},\al_{3})$,
and the corresponding Sobolev conormal norm:
\begin{equation*}
	\|f(t)\|^2_{H_{co}^m}=\sum_{|\al|\leq m}\|Z^\al f(t)\|^2_{L^2_x},~~
	\|f(t)\|_{H_{co}^{k,\infty}}=\sum_{|\al|\leq k}\|Z^\al f(t)\|_{L^\infty_x},
\end{equation*}
for smooth function $f(t,x)$. Note that we also use $H^k$ to denote the usual Sobolev space $W^{k,2}(\Om).$

Then the solution of \eqref{NS}, \eqref{NSid} and \eqref{NSbd} is sought in the set of the
functions
\begin{equation*}
\begin{split}
\FX_\eps(t)=&\left\{[\rho,u,\ta]\Big|\|[\rho-1,u,\ta-1](t)\|^2_{\FX_{\eps}}\leq c_0\eps^2,\ c_0>0\right\}
\end{split}
\end{equation*}
where
\begin{equation}\label{Xe}
\begin{split}
\|[\rho-1,u,\ta-1](t)\|^2_{\FX_{\eps}}
=&\sup\limits_{0\leq s\leq t}
\sum\limits_{\al_0\leq 3}\left\|\pa_t^{\al_0}[\rho-1,u,\ta-1](s)\right\|_2^2
+\sup\limits_{0\leq s\leq t}\sum\limits_{\al_0\leq 2}\left\|\pa_t^{\al_0}\na_x[\rho,u,\ta](s)\right\|_2^2
\\&+\sup\limits_{0\leq s\leq t}\sum\limits_{\al_0\leq 2}\eps^2\left\|\pa_t^{\al_0}\na_x^2\rho(s)\right\|_2^2
+\sup\limits_{0\leq s\leq t}\sum\limits_{\al_0\leq 1}\eps^2\left\|\pa_t^{\al_0}\na_x^2[u,\ta](s)\right\|_2^2
\\&+\sup\limits_{0\leq s\leq t}\sum\limits_{\al_0\leq 3,|\al|=2}\eps\left\|\pa_t^{\al_0}Z^{\al}\left[\widetilde{\rho},u,\widetilde{\ta}\right](s)\right\|_2^2
+\sup\limits_{0\leq s\leq t}\sum\limits_{\al_0\leq 2}\eps^2\left\|\pa_t^{\al_0}\na_x[\rho,u,\ta](s)\right\|_{H_{co}^{2}}^2
\\&+\sup\limits_{0\leq s\leq t}\sum\limits_{\al_0\leq 2}\eps^4\left\|\pa_t^{\al_0}\na_x^2\rho\right\|_{H_{co}^{2}}^2
+\sup\limits_{0\leq s\leq t}\sum\limits_{\al_0\leq 1}\eps^4\left\|\pa_t^{\al_0}\na_x^2[u,\ta]\right\|_{H_{co}^{2}}^2.
\end{split}
\end{equation}

\begin{theorem}\label{NSsol}
Let $\ka_0>0$. If $$\|[\rho_0-1,u_0,\ta_0-1]\|_{\FX_\eps}\leq \ka_0\eps,$$
then there exists a unique global smooth solution $[\rho,u,\ta](t,x)$ to \eqref{NS}, \eqref{NSid} and \eqref{NSbd} satisfying
\begin{equation} \label{apes}
\begin{split}
\|[\rho-1,u,\ta-1](t)\|^2_{\FX_{\eps}}&
 +\eps\sum\limits_{\al_0\leq3}\int_0^t\|\pa_t^{\al_0}\na_x[u,\ta]\|^2_{2}ds
 +\eps\sum\limits_{\al_0\leq2}\int_0^t\|\pa_t^{\al_0}\na_x \rho\|^2_{2}ds
 \\&+\eps\sum\limits_{1\leq\al_0\leq3}\int_0^t\|\pa_t^{\al_0}\rho\|^2_{2}ds
 +\eps\sum\limits_{\al_0\leq2}\int_0^t\|\pa_t^{\al_0}\na^2_x[\rho,u,\ta]\|^2_{2}ds
+\eps^2\sum\limits_{1\leq\al_0\leq3}\int_0^t\|\pa_t^{\al_0}\rho\|^2_{H^2_{co}}ds
  \\& +\eps^2\sum\limits_{\al_0\leq 2}\int_0^t\left\|\pa_t^{\al_0}\na_x\widetilde{\rho}(s)\right\|_{H^2_{co}}^2ds
 +\eps^2\sum\limits_{\al_0\leq 3}\int_0^t\left\|\pa_t^{\al_0}\na_x\left[u,\widetilde{\ta}\right](s)\right\|_{H^2_{co}}^2ds
 \\&+\eps^3\sum\limits_{\al_0\leq 2}\int_0^t\left\|\pa_t^{\al_0}\na^2_x\left[\widetilde{\rho},u,\widetilde{\ta}\right](s)\right\|_{H^2_{co}}^2ds
+\eps^3\sum\limits_{\al_0\leq1}\int_0^t\|\pa_t^{\al_0}\na^3_x[u,\ta]\|_{H^2_{co}}^2ds
\\& \leq C_0\|[\rho_0-1,u_0,\ta_0-1]\|^2_{\FX_\eps},
\end{split}
\end{equation}
for $C_0>0$.
\end{theorem}
\begin{proof}
The local existence of \eqref{NS}, \eqref{NSid} and \eqref{NSbd} follows from a standard iteration method, we only prove the {\it a priori} estimate
\eqref{apes} under the {\it a priori} assumption
\begin{equation}\label{aps}
N(t)\leq \ka^2_0\eps^2,
\end{equation}
where $N(t)$ is given by
\begin{equation*}
\begin{split}
N(t)=&N(\rho,u,\ta)(t)\\=&\|[\rho-1,u,\ta-1](t)\|^2_{\FX_{\eps}}
 +\eps\sum\limits_{\al_0\leq3}\int_0^t\|\pa_t^{\al_0}\na_x[u,\ta]\|^2_{2}ds
 +\eps\sum\limits_{\al_0\leq2}\int_0^t\|\pa_t^{\al_0}\na_x \rho\|^2_{2}ds
 \\&+\eps\sum\limits_{1\leq\al_0\leq3}\int_0^t\|\pa_t^{\al_0}\rho\|^2_{2}ds
 +\eps\sum\limits_{\al_0\leq2}\int_0^t\|\pa_t^{\al_0}\na^2_x[\rho,u,\ta]\|^2_{2}ds
+\eps^2\sum\limits_{1\leq\al_0\leq3}\int_0^t\|\pa_t^{\al_0}\rho\|^2_{H^2_{co}}ds
  \\& +\eps^2\sum\limits_{\al_0\leq 2}\int_0^t\left\|\pa_t^{\al_0}\na_x\widetilde{\rho}(s)\right\|_{H^2_{co}}^2ds
 +\eps^2\sum\limits_{\al_0\leq 3}\int_0^t\left\|\pa_t^{\al_0}\na_x\left[u,\widetilde{\ta}\right](s)\right\|_{H^2_{co}}^2ds
 \\&+\eps^3\sum\limits_{\al_0\leq 2}\int_0^t\left\|\pa_t^{\al_0}\na^2_x\left[\widetilde{\rho},u,\widetilde{\ta}\right](s)\right\|_{H^2_{co}}^2ds
+\eps^3\sum\limits_{\al_0\leq1}\int_0^t\|\pa_t^{\al_0}\na^3_x[u,\ta]\|_{H^2_{co}}^2ds.
\end{split}
\end{equation*}
The proof is then divided into following four steps.

{\it Step 1. The zeroth order energy estimate.} Denote $[\widetilde{\rho},\widetilde{\ta}]=[\rho-1,\ta-1]$, take the inner product of $\eqref{NS}_1$,
$\eqref{NS}_2$ and $\eqref{NS}_3$ with $\widetilde{\rho}, u$ and $\frac{\widetilde{\ta}}{\ta}$, respectively, to obtain
\begin{equation}\label{rhoz}
\frac{1}{2}\frac{d}{dt}\|\widetilde{\rho}\|_2^2+(\na_x\widetilde{\rho}u,\widetilde{\rho})
+(\widetilde{\rho}\na_x\cdot u,\widetilde{\rho})+(\na_x\cdot u,\widetilde{\rho})=0,
\end{equation}
\begin{equation}\label{uz}
\begin{split}
\frac{1}{2}\frac{d}{dt}\int_\Om\rho u^2dx&+(\na_x \widetilde{\rho},u)+(\na_x \widetilde{\ta},u)+(\widetilde{\ta}\na_x \widetilde{\rho},u)+(\widetilde{\rho}\na_x \widetilde{\ta},u)\\=&-\eps\left(\left\|\sqrt{\mu(\ta)}\na_xu\right\|_2^2+\frac{1}{3}\left\|\sqrt{\mu(\ta)}\na_x\cdot u\right\|_2^2\right),
\end{split}
\end{equation}
\begin{equation}\label{taz}
\begin{split}
\frac{3}{4}\frac{d}{dt}\int_\Om\frac{\rho}{\ta} \widetilde{\ta}^2dx&+\frac{3}{4}\int_\Om\frac{\rho}{\ta^2} \widetilde{\ta}^2\pa_t\ta dx
+\frac{3}{4}\int_\Om\frac{\rho}{\ta^2} \widetilde{\ta}^2 u\cdot\na_x\ta dx
+(\na_x\cdot u,\widetilde{\ta})+(\widetilde{\rho}\na_x\cdot u,\widetilde{\ta})\\
=&-\eps\left\|\sqrt{\frac{\ka(\ta)}{\ta}}\na_x\widetilde{\ta}\right\|_2^2
+\frac{\eps}{2}(\mu(\ta)\sigma(u):\sigma(u),\widetilde{\ta}/\ta)+\left(\ka(\ta)\na_x\ta,\frac{\widetilde{\ta}\na_x\ta}{\ta^2}\right).
\end{split}
\end{equation}
Taking the summation of \eqref{rhoz}, \eqref{uz} and \eqref{taz}, applying Lemmas \ref{sob.ine.lem} and \ref{sob.ine} and the {\it a priori} assumption \eqref{aps}, we then have for some $\la>0$
\begin{equation}\label{0eng}
\begin{split}
\left\|\left[\widetilde{\rho},u,\widetilde{\ta}\right](t)\right\|_2^2+\la\eps\int_0^t\left\|\na_x[u,\widetilde{\ta}](s)\right\|_2^2ds
\leq C\left\|\left[\widetilde{\rho},u,\widetilde{\ta}\right](0,x)\right\|_2^2
+\ka_0\eps\int_0^t\left\|\na_x\left[\rho,u,\widetilde{\ta}\right](s)\right\|_2^2ds.
\end{split}
\end{equation}
To obtain the dissipation of $\na_x\widetilde{\rho}$, we next get from the inner product of $\eps(\eqref{NS}_1,\na_x\cdot u)$ and
$\eps(\eqref{NS}_2,\na_x\widetilde{\rho}/\rho)$ that for any $\eta>0$
\begin{equation}\label{rhodis}
\begin{split}
-\eps(u,&\na_x\widetilde{\rho})(t)+\la\eps\int_0^t\left\|\na_x\widetilde{\rho}\right\|_2^2ds\\
\leq& C\eps|(u,\na_x\widetilde{\rho})(0)|
+C_\eta\eps^3\int_0^t\|\na_x^2u\|_2^2ds+C(\eps+\eps^2)\int_0^t\left\|\na_x\left[u,\widetilde{\ta}\right](s)\right\|_2^2ds
\\&+C(\ka_0+\eta)\eps\int_0^t\left\|\na_x\widetilde{\rho}\right\|_2^2ds,
\end{split}
\end{equation}
where we used the fact $(\widetilde{\rho},\na_x\cdot \pa_tu)+(\na_x\rho,\pa_tu)=0$.

Let $\ka_0$ and $\eps$ be suitably small, then \eqref{0eng} and \eqref{rhodis} give rise to
\begin{equation*}
\begin{split}
\left\|\left[\widetilde{\rho},u,\widetilde{\ta}\right](t)\right\|_2^2
&-\eps|(u,\na_x\widetilde{\rho})|+\la\eps\int_0^t\left\|\na_x[\widetilde{\rho},u,\widetilde{\ta}](s)\right\|_2^2ds\\
\leq& C\left\|\left[\widetilde{\rho}_0,u_0,\widetilde{\ta}_0\right]\right\|_2^2+
C\eps|(u_0,\na_x\widetilde{\rho}_0)|
+C\eps^3\int_0^t\|\na_x^2u\|_2^2ds
\leq CN(0)+C\eps N(t).
\end{split}
\end{equation*}
Similarly, by acting $\pa_t^{\al_0}$ to \eqref{NS}, one also has
\begin{equation}\label{0engp2}
\begin{split}
\sum\limits_{\al_0\leq3}&\left\|\pa_t^{\al_0}\left[\widetilde{\rho},u,\widetilde{\ta}\right](t)\right\|_2^2
-\eps\sum\limits_{\al_0\leq2}|(\pa_t^{\al_0}u,\pa_t^{\al_0}\na_x\widetilde{\rho})|
+\la\sum\limits_{\al_0\leq2}\eps\int_0^t\left\|\na_x\pa_t^{\al_0}\widetilde{\rho}(s)\right\|_2^2ds
\\&+\la\sum\limits_{1\leq\al_0\leq3}\eps\int_0^t\left\|\pa_t^{\al_0}\widetilde{\rho}(s)\right\|_2^2ds
+\la\eps\sum\limits_{\al_0\leq3}\int_0^t\left\|\na_x\pa_t^{\al_0}[u,\widetilde{\ta}](s)\right\|_2^2ds\\
\leq& C\sum\limits_{\al_0\leq3}\left\|\pa_t^{\al_0}\left[\widetilde{\rho}_0,u_0,\widetilde{\ta}_0\right]\right\|_2^2+
C\eps\sum\limits_{\al_0\leq2}|(\pa_t^{\al_0}u_0,\na_x\pa_t^{\al_0}\widetilde{\rho}_0)|
\\&+C\eps^3\sum\limits_{\al_0\leq2}\int_0^t\|\na_x^2\pa_t^{\al_0}u\|_2^2ds
+C(\eps+\ka_0)N(t)\\
\leq& CN(0)+C(\eps+\ka_0)N(t).
\end{split}
\end{equation}
Moreover, it follows from Lemmas \ref{sob.ine.lem}  and \ref{sob.ine} and the {\it a priori} assumption \eqref{aps} that
\begin{align}\label{pat3rho}
\eps\int_0^t\|\pa_t^3\rho\|_2^2ds\lesssim& \eps\int_0^t\|\pa_t^2(\na_x\rho\cdot u)\|_2^2ds
+\eps\int_0^t\|\pa_t^2(\tilde{\rho}\na_x\cdot u)\|_2^2ds
+\eps\int_0^t\|\pa_t^2\cdot u\|_2^2ds\notag\\
\leq& C(\eps+\ka_0)N(t)+\eps\int_0^t\|\pa_t^2\na_x u\|_2^2ds.
\end{align}

{\it Step 2. The first order energy estimate.} The energy estimates for $\na_x\left[\widetilde{\rho},u,\widetilde{\ta}\right]$ are subtle since we know nothing about the derivatives of these quantities on the boundary and the dissipation of \eqref{NS} is very weak. Our strategy to take care of these difficulties is the Helmholtz decomposition, elliptic estimates and the Galerkin method. To see this, in terms of Lemma \ref{Hdec}, we first decompose $u$
as
\begin{equation}
\label{def.helmholtz}
u=u^1+u^2,\quad u^1=\na_x \bu, \quad u^2=\na_x\times \bv, \quad n\cdot
u^2|_{\pa\Om}=0.
\end{equation}

Moreover, we set $\ta_m(t,x)-1=\sum\limits_{k=1}^md_k(t)\mathbbm{w}_k(x)$ with $\mathbbm{w}_k(x)\in H^1_0(\Om) (k=1,2,\cdots)$ being the eigenvalues of the operator $-\Delta_x$, i.e.
\begin{eqnarray*}
\left\{\begin{array}{rll}
&-\Delta_x \mathbbm{w}_k=\la_k\mathbbm{w}_k,\ \ x\in\Om,\\[2mm]
&\mathbbm{w}_k=0,\ x\in\pa\Om,
\end{array}\right.
\end{eqnarray*}
where $0<\la_1\leq\la_2\leq\cdots.$ The key point here is that we get an approximation sequence $\ta_m$ such that $\Delta \ta_m|_{\pa\Om}=0.$

We now approximate \eqref{NS} as
\begin{eqnarray}\label{NS2}
\left\{\begin{array}{rlll}
\begin{split}
&\pa_t\rho+\na_x\cdot(\rho u)=0,\\
&\rho(\pa_tu+u\cdot \na_xu)+\na_x (\rho\ta_m)=\eps\na_x\cdot\left[\mu(\ta_m)\sigma(u)\right],\\
&\frac{3}{2}\rho(\pa_t\ta_m+u\cdot\na_x\ta_m)+\rho\ta_m\na_x\cdot u
=\eps\na_x\cdot\left[\ka(\ta_m)\na_x\ta_m\right]
+\frac{\eps}{2}\mu(\ta_m)\sigma(u):\sigma(u),\\
&[\rho,u,\ta_m](0,x)=[\rho_0,u_0,\ta_0](x),\\
&{[u,\ta_m]\big|_{\pa\Omega}=[0,1]}.
\end{split}
\end{array}\right.
\end{eqnarray}
Note that here $[\rho,u]\eqdef[\rho_m,u_m]$ also depend on $m$, we drop the subscript $m$ for brevity.

As before, by Lemma \ref{Hdec}, for any function function $f\in H^k(\Om,\R^3)$, we have the Helmholtz decomposition $f=P_1f+P_2f$, where $P_1$ is the curl free projection operator and $P_2$ is the divergence free operator. Acting $P_2$ to $\eqref{NS2}_2$, one has
\begin{equation}\label{P2u}
\begin{split}
\pa_tu^2&+P_2\{(\rho-1)\pa_tu\}+P_2\{\rho u\cdot \na_xu\}+\eps\mu(1)\na_x\times\na_x\times u\\=&\frac{4\eps}{3}P_2\{(\mu(\ta_m)-\mu(1))\na_x\na_x\cdot u\}
+\eps P_2\{\na_x\mu(\ta_m)\cdot \si(u)\}
\\&-P_2\{(\mu(\ta_m)-\mu(1))\na_x\times\na_x\times u\}.
\end{split}
\end{equation}
Taking the inner product of $\eqref{P2u}$ with $\pa_t u$ and integrating the resulting equation with respect to $t$, we further have by applying Lemma \ref{Hdec} again that
\begin{equation}\label{p2u.p1}
\begin{split}
\eps(\na_x &\times u^2,\na_x \times u^2)+\la\int_0^t\|\pa_tu^2\|_2^2~ds\\
\leq& C\eps|(\na_x\times u_0^2,\na_x\times u_0^2)|+
C\int_0^t\|\widetilde{\rho}\pa_tu\|_2^2ds
+C\int_0^t\|\rho u\cdot \na_xu\|_2^2ds\\
&+C\eps^2\int_0^t\|\na_x\mu(\ta_m)\cdot\sigma(u)\|_2^2ds+C\eps^2\int_0^t\|(\mu(\ta_m)-\mu(1))\na_x\na_x\cdot u\|_2^2ds
\\
&+C\eps^2\int_0^t\|(\mu(\ta_m)-\mu(1))\na_x\times\na_x\times u\|_2ds\\
\leq &C\eps|(\na_x\times u_0^2,\na_x\times u_0^2)|+C\ka^2_0\eps^2\int_0^t\left\|\na_x u(s)\right\|_{H^1}^2ds
\leq C\eps N(0)+C\ka^2_0\eps N(t),
\end{split}
\end{equation}
where we have used the following identities that
$$\na_x\cdot(\mu(\ta_m)\si(u))=\na_x\mu(\ta_m)\cdot \si(u)+\mu(\ta_m)(\frac{4}{3}\na_x\na_x\cdot u-\na_x\times\na_x\times u),$$
as well as
$$
(\na_x\times\na_x\times u,\pa_t u)=(\na_x\times u,\pa_t \na_x\times u)=(\na_x\times u^2,\na_x\times \pa_t u^2),
$$
due to the boundary condition that $u|_{\pa\Omega}=0$ and the Helmholtz decomposition \eqref{def.helmholtz}.
Likewise, it follows that for $\al_0\leq 2$
\begin{equation}\label{patu2p1}
\begin{split}
\eps(\na_x& \times \pa_t^{\al_0}u^2,\na_x \times \pa_t^{\al_0}u^2)+\la\int_0^t\|\pa_t^{\al_0+1}u^2\|_2^2~ds\\
\leq &C\eps|(\na_x\times \pa_t^{\al_0}u_0^2,\na_x\times \pa_t^{\al_0}u_0^2)|+C\ka^2_0\eps^2\sum\limits_{\al_0\leq2}\int_0^t\left\|\na_x\pa_t^{\al_0}u(s)\right\|_{H^1}^2ds
\leq C\eps N(0)+C\ka^2_0\eps N(t).
\end{split}
\end{equation}

Next, we rewrite $\na_x\cdot(\mu(\ta_m)\si(u))=\na_x\mu(\ta_m)\cdot \si(u)+\mu(\ta_m)(\Delta_x u+1/3\na_x\na_x\cdot u)$ and consider the following elliptic problems:
\begin{eqnarray*}
\left\{\begin{array}{rll}
&\eps\na_x\cdot u=-\eps\pa_t\rho-\eps\widetilde{\rho}\na_x\cdot u-\eps\na_x\rho \cdot u\eqdef h_1,\\[2mm]
&-\eps\mu(1)\Delta u+\na_x\overline{P}=-\pa_tu^2-\widetilde{\rho}\pa_t u-\rho u\cdot\na_xu+\eps(\mu(\ta_m)-\mu(1))\Delta u
\\[2mm]&\qquad\qquad+\eps\na_x\mu(\ta_m)\cdot \si(u)+\frac{\eps}{3}(\mu(\ta_m)-\mu(1))\na_x\na_x\cdot u\eqdef h_2,\\
&u|_{\pa\Om}=0,
\end{array}\right.
\end{eqnarray*}
where
$$
\overline{P}=\pa_t\bu+\rho\ta_m-\frac{\eps}{3}\mu(1)\na_x\cdot u.
$$
In view of Lemma 4.3 in \cite[pp.451]{Matsumura-Nishida-1983}, one has for any $\al_0\geq0$
\begin{align}
\eps^2\|\na_x^2 \pa_t^{\al_0}u\|_2^2\lesssim& \|\pa_t^{\al_0}h_1\|_{H^1}^2+\| \pa_t^{\al_0}h_2\|_2^2\notag\\
\lesssim& \eps^2\|\pa_t^{\al_0+1}\widetilde{\rho}\|^2_{H^1}
+\ka^2_0\eps^3\sum\limits_{\al_0'\leq\al_0}\left\|\na_x\pa_t^{\al'_0}[\widetilde{\rho},u]\right\|_2^2
+\ka_0^2\eps^3\sum\limits_{\al_0'\leq\al_0}\left\|\na^2_x\pa_t^{\al_0}[\widetilde{\rho},u]\right\|_2^2\notag\\
&+\|\pa_t\pa_t^{\al_0}u^2\|_2^2+\ka_0^2\eps^2\sum\limits_{\al_0'\leq\al_0}\left\|\pa_t^{\al_0}[\na_x\rho,\na_xu]\right\|_2^2\label{na2u.p1}\\
\lesssim& \eps^2\|\na_x\cdot \pa_t^{\al_0}u\|^2_{H^1}+\ka^2_0\eps^3\sum\limits_{\al_0'\leq\al_0}\left\|\na_x\pa_t^{\al_0}[\widetilde{\rho},u]\right\|_2^2
+\ka_0^2\eps^3\sum\limits_{\al_0'\leq\al_0}\left\|\na^2_x\pa_t^{\al_0}[\widetilde{\rho},u]\right\|_2^2\notag\\
&+\|\pa_t\pa_t^{\al_0}u^2\|_2^2+\ka_0^2\eps^2\sum\limits_{\al_0'\leq\al_0}\left\|\pa_t^{\al'_0}[\na_x\rho,\na_xu]\right\|_2^2,\label{na2u.p2}
\end{align}
and
\begin{equation}\label{na2u.curl}
\begin{split}
\eps^2\|\na^2_x \pa_t^{\al_0}u^2\|_2^2
\lesssim& \ka^2_0\eps^3\sum\limits_{\al_0'\leq\al_0}\left\|\na_x\pa_t^{\al_0}[\widetilde{\rho},u]\right\|_2^2
+\ka_0^2\eps^3\sum\limits_{\al_0'\leq\al_0}\left\|\na^2_x\pa_t^{\al_0}[\widetilde{\rho},u]\right\|_2^2\\
&+\|\pa_t\pa_t^{\al_0}u^2\|_2^2+\ka_0^2\eps^2\sum\limits_{\al_0'\leq\al_0}\left\|\pa_t^{\al_0}[\na_x\rho,\na_xu]\right\|_2^2,
\end{split}
\end{equation}
since $\Delta u=\na_x\na_x\cdot u-\na_x \times\na_x\times u$, $\Delta u^2=-\na_x \times\na_x\times u^2,$ and $\na_x\cdot u^2=0.$


Moreover, by using
\begin{equation*}
\begin{split}
&-\eps\mu(1)\Delta u-\frac{\eps}{3}\mu(1)\na_x\na_x\cdot u=-\rho\pa_tu-\rho u\cdot\na_xu-\na_xp+\eps(\mu(\ta_m)-\mu(1))\Delta u
\\&\qquad\qquad+\eps\na_x\mu(\ta_m)\cdot \si(u)+\frac{\eps}{3}(\mu(\ta_m)-\mu(1))\na_x\na_x\cdot u,\\
&-\eps\ka(1)\Delta\ta_m=
-\frac{3}{2}\rho(\pa_t\ta_m+u\cdot\na_x\ta_m)-\rho\ta_m\na_x\cdot u+
\eps\na_x\ka(\ta_m)\cdot\na_x\ta_m\\&\qquad\qquad+\eps(\ka(\ta_m)-\ka(1))\Delta\ta_m
+\frac{\eps}{2}\mu(\ta_m)\sigma(u):\sigma(u),
\end{split}
\end{equation*}
we get from standard elliptic estimates that for any $\al_0\geq0$
\begin{equation}\label{na2u.ep}
\begin{split}
\eps^2\|\na_x^2\pa_t^{\al_0}u\|_2^2
\lesssim& \|\pa_t\pa_t^{\al_0}u\|_2^2
+\sum\limits_{\al_0'\leq\al_0}\left\|\na_x\pa_t^{\al'_0}[\widetilde{\rho},\ta_m]\right\|_2^2
+\ka^2_0\eps\sum\limits_{\al_0'\leq\al_0}\left\|\na_x\pa_t^{\al'_0}u\right\|^2
\\&+\ka_0^2\eps^2\sum\limits_{\al_0'\leq\al_0}\left\|\na^2_x\pa_t^{\al'_0}u\right\|_2^2,
\end{split}
\end{equation}
\begin{equation}\label{na3u}
\begin{split}
\eps^4\|\na_x^3 \pa_t^{\al_0}u\|_2^2
\lesssim& \eps^2\|\pa_t\pa_t^{\al_0}u\|^2_{H^1}
+\eps^2\sum\limits_{\al_0'\leq\al_0}\left\|\na_x\pa_t^{\al'_0}[\widetilde{\rho},\ta_m]\right\|_{H^1}^2
+\ka^2_0\eps^3\sum\limits_{\al_0'\leq\al_0}\left\|\na_x\pa_t^{\al'_0}u\right\|_{H^1}^2
\\&+\ka_0^2\eps^4\sum\limits_{\al_0'\leq\al_0}\left\|\na^3_x\pa_t^{\al'_0}u\right\|_2^2,
\end{split}
\end{equation}
and
\begin{equation}\label{na2ta}
\begin{split}
\eps^2\|\na_x^2 \pa_t^{\al_0}\ta_m\|_2^2
\lesssim& \|[\pa_t\pa_t^{\al_0}\ta_m,\na_x\pa_t^{\al_0}u]\|^2_2
+\ka^2_0\eps^2\sum\limits_{\al_0'\leq\al_0}\left\|\na_x\pa_t^{\al_0}[u,\ta_m]\right\|_2^2
+\ka_0^2\eps^3\sum\limits_{\al_0'\leq\al_0}\left\|\na^2_x\pa_t^{\al'_0}\ta_m\right\|_2^2,
\end{split}
\end{equation}
\begin{equation}\label{na3ta}
\begin{split}
\eps^4\|\na_x^3 \pa_t^{\al_0}\ta_m\|_2^2
\lesssim& \eps^2\|[\pa_t\pa_t^{\al_0}\ta_m,\na_x\pa_t^{\al_0}u]\|^2_{H^1}
+\ka^2_0\eps^4\sum\limits_{\al_0'\leq\al_0}\left\|\na_x\pa_t^{\al_0}[u,\ta_m]\right\|_2^2
\\&+\ka_0^2\eps^4\sum\limits_{\al_0'\leq\al_0}\left\|\na^3_x\pa_t^{\al'_0}[\ta_m,u](s)\right\|_2^2.
\end{split}
\end{equation}
Next, $(\na_x\eqref{NS2}_1,\na_x\rho)-(\eqref{NS2}_2,\na_x\na_x\cdot u)+(\na_x\eqref{NS2}_3,\na_x\ta_m)$ yields
\begin{equation}\label{1drut}
\begin{split}
\frac{1}{2}\frac{d}{dt}&\|\na_x\rho\|^2_2+\frac{1}{2}\frac{d}{dt}\|\sqrt{\rho}\na_x\cdot u\|^2_2
+\frac{3}{4}\frac{d}{dt}\|\sqrt{\rho}\na_x\ta_m\|^2_2
\\&
+(\na_x(\na_x\rho \cdot u),\na_x\rho)+(\na_x(\widetilde{\rho}\na_x\cdot u),\na_x\rho)
+(\na_x\rho\cdot\pa_tu,\na_x\cdot u)\\
&+(\na_x\cdot(\rho u\cdot \na_xu),\na_x\cdot u)
-(\widetilde{\ta}_m\na_x \rho,\na_x\na_x\cdot u)-(\widetilde{\rho}\na_x \ta_m,\na_x\na_x\cdot u) \\&+\eps(\na_x\mu(\ta_m)\cdot \si(u),\na_x\na_x\cdot u)+
\eps\left(\mu(\ta_m)(\frac{4}{3}\na_x\na_x\cdot u-\na_x\times\na_x\times u),\na_x\na_x\cdot u\right)\\
&+\frac{3}{2}(\na_x\rho\pa_t\ta_m,\na_x\ta_m)+(\na_x(\rho u\cdot\na_x\ta_m),\na_x\ta_m)+(\na_x(\rho\ta_m)\na_x\cdot u,\na_x\ta_m)
\\&+((\rho\ta_m-1)\na_x\na_x\cdot u,\na_x\ta_m)
-\eps(\na_x(\na_x\ka(\ta_m)\cdot\na_x\ta_m),\na_x\ta_m)
-\eps(\na_x\ka(\ta_m)\Delta_x\ta_m,\na_x\ta_m)
\\&-\eps(\ka(\ta_m)\na_x\Delta_x\ta_m,\na_x\ta_m)
-\frac{\eps}{2}(\na_x(\mu(\ta_m)\sigma(u):\sigma(u)),\na_x\ta_m)=0,\\
\end{split}
\end{equation}
which further implies
\begin{equation}\label{1drho}
\begin{split}
\|\na_x\rho\|^2_2&+\|\na_x\cdot u\|^2_2
+\|\na_x\ta_m\|^2_2
+\la\eps\int_0^t\|\na_x\na_x\cdot u\|^2_2+\|\na^2_x\ta_m\|_2^2~ds\\
\lesssim &N(0)+(\eps+\ka_0) N(t)+\eps\int_0^t\|\na_x\times\na_x\times u\|^2_2~ds,
\end{split}
\end{equation}
where we also used the fact $\|\na^2_x\ta_m\|_2^2\leq C\|\Delta_x\ta_m\|_2^2$.
Similarly, it also holds
\begin{equation}\label{1drhop1}
\begin{split}
\|\na_x\pa_t^{\al_0}\rho\|^2_2&+\|\na_x\cdot \pa_t^{\al_0}u\|^2_2
+\|\na_x\pa_t^{\al_0}\ta_m\|^2_2
+\la\eps\int_0^t\|\na_x\na_x\cdot \pa_t^{\al_0}u\|^2_2+\|\na^2_x\pa_t^{\al_0}\ta_m\|_2^2~ds\\
\lesssim &N(0)+(\eps+\ka_0) N(t)+\eps\int_0^t\|\na_x\times\na_x\times\pa_t^{\al_0}u\|^2_2~ds,
\end{split}
\end{equation}
for $\al_0\leq 2.$ Hence, combing \eqref{patu2p1}, \eqref{na2u.p2}, \eqref{na2u.curl}, \eqref{na2u.ep} and \eqref{1drhop1} together, one has
\begin{equation}\label{2diss.uta}
\begin{split}
\sum\limits_{\al_0\leq2}&\|\na_x\pa_t^{\al_0}\rho\|^2_2+\sum\limits_{\al_0\leq2}\|\na_x \pa_t^{\al_0}u\|^2_2
+\sum\limits_{\al_0\leq2}\|\na_x\pa_t^{\al_0}\ta_m\|^2_2
+\la\eps\sum\limits_{\al_0\leq2}\int_0^t\|\na_x^2 \pa_t^{\al_0}[u,\ta_m]\|_2^2~ds\\
\lesssim &N(0)+\ka_0 N(t),
\end{split}
\end{equation}

{\it Step 3. The estimates for $\na_x^2\rho$.} We now act $P_1$ to $\eqref{NS2}_2$ to obtain
\begin{equation}\label{P1u}
\begin{split}
P_1\{\rho(\pa_tu+u\cdot \na_xu)\}+\na_x (\rho\ta_m)=&\frac{4\eps}{3}P_1\{(\mu(\ta_m)-\mu(1))\na_x\na_x\cdot u\}
+\frac{4\eps}{3}\mu(1)\na_x\na_x\cdot u
\\&+\eps P_1\{\na_x\mu(\ta_m)\cdot \si(u)\}
-\frac{4\eps}{3}P_1\{(\mu(\ta_m)-\mu(1))\na_x\times\na_x\times u\}.
\end{split}
\end{equation}
Then $\frac{4\mu(1)\eps^2}{3}(\na^2_x\eqref{NS2}_1,\na_x^2\rho)+(\na_x\eqref{P1u},\eps\na_x^2\rho)$ gives rise to
\begin{equation}\label{p1u.p1}
\begin{split}
\frac{2\mu(1)\eps^2}{3}&\frac{d}{dt}\|\na_x^2\rho\|^2_2+\frac{4\mu(1)\eps^2}{3}(\na^2_x(\na_x\rho\cdot u),\na_x^2\rho)+\frac{4\mu(1)\eps^2}{3}(\na^2_x(\widetilde{\rho}\na_x\cdot u),\na_x^2\rho)\\
&+\eps(\na_xP_1\{\rho(\pa_tu+u\cdot \na_xu)\},\na_x^2\rho)+\eps(\ta_m\na_x^2\rho,\na_x^2\rho)
+\eps(\na_x^2\ta_m\rho,\na_x^2\rho)\\&+2\eps(\na_x\ta_m\na_x\rho,\na_x^2\rho)
-\frac{4\eps^2}{3}(P_1\{(\mu(\ta_m)-\mu(1))\na_x\na_x\cdot u\},\na_x^2\rho)\\&-
\eps^2(\na_xP_1\{\na_x\mu(\ta_m)\cdot \si(u)\},\na_x^2\rho)+\frac{4\eps^2}{3}(\na_xP_1\{(\mu(\ta_m)-\mu(1))\na_x\times\na_x\times u\},\na_x^2\rho).
\end{split}
\end{equation}
Consequently, by employing Lemmas \ref{sob.ine} and \ref{Hdec} and the {\it a priori} assumption \ref{aps} one has
\begin{equation*}
\begin{split}
\eps^2\|\na_x^2\rho\|^2_2&+\la\eps\int_0^t\|\na_x^2\rho\|^2_2ds\lesssim N(0)+(\eps+\ka_0)N(t)+\eps\int_0^t\|[\pa_t\na_xu,\na_x^2\ta_m]\|^2_2ds,
\end{split}
\end{equation*}
and a similar calculation leads us to
\begin{equation}\label{2drhop2}
\begin{split}
\eps^2\|\na_x^2\pa_t^{\al_0}\rho\|^2_2&+\la\eps\int_0^t\|\na_x^2\pa_t^{\al_0}\rho\|^2_2ds\\
\lesssim& N(0)+\ka_0N(t)+\eps\int_0^t\|[\pa^{\al_0+1}_t\na_xu,\na_x^2\pa_t^{\al_0}\ta_m]\|^2_2ds,\ \ \text{for}\ \al_0\leq2.
\end{split}
\end{equation}
Let $m\rightarrow\infty$,
we thereupon conclude from \eqref{0engp2}, \eqref{pat3rho}, \eqref{na2u.p1}, \eqref{na3u}, \eqref{na2ta}, \eqref{na3ta},
\eqref{1drhop1}, \eqref{2diss.uta} and \eqref{2drhop2} that
\begin{equation}\label{basiceng}
\begin{split}
\sum\limits_{\al_0\leq 3}&\Big\|\pa_t^{\al_0}\left[\widetilde{\rho},u,\widetilde{\ta}\right](t)\Big\|_2^2
+\sum\limits_{\al_0\leq 2}\left\|\na_x\pa_t^{\al_0}\left[\widetilde{\rho},u,\widetilde{\ta}\right](t)\right\|_2^2
+\eps^2\sum\limits_{\al_0\leq 2}\left\|\na^2_x\pa_t^{\al_0}\widetilde{\rho}(t)\right\|_2^2
+\eps^2\sum\limits_{\al_0\leq 1}\left\|\na^2_x\pa_t^{\al_0}\left[u,\widetilde{\ta}\right](t)\right\|_2^2
\\&+\sum\limits_{\al_0\leq 1}\eps^4\left\|\na^3_x\pa_t^{\al_0}\left[u,\widetilde{\ta}\right](t)\right\|_2^2
+\sum\limits_{1\leq\al_0\leq 3}\la\eps\int_0^t\left\|\pa_t^{\al_0}\widetilde{\rho}(s)\right\|_2^2ds
+\sum\limits_{\al_0\leq 2}\la\eps\int_0^t\left\|\na_x\pa_t^{\al_0}\widetilde{\rho}(s)\right\|_2^2ds
\\&+\sum\limits_{\al_0\leq 3}\la\eps\int_0^t\left\|\na_x\pa_t^{\al_0}[u,\widetilde{\ta}](s)\right\|_2^2ds
+\sum\limits_{\al_0\leq 2}\la\eps\int_0^t\left\|\na^2_x\pa_t^{\al_0}[\widetilde{\rho},u,\widetilde{\ta}](s)\right\|_2^2ds
\\&+\sum\limits_{\al_0\leq 1}\la\eps^3\int_0^t\left\|\na^3_x\pa_t^{\al_0}[u,\ta]\right\|_2^2ds
\leq CN(0)+C(\eps+\ka_0)N(t).
\end{split}
\end{equation}

{\it Step 4. Conormal energy estimates.}
In this step, with \eqref{basiceng} in our hands, we intend to obtain
\begin{equation}\label{coneng}
\begin{split}
\eps\sup\limits_{0\leq s\leq t}&\sum\limits_{\al_0\leq3,|\al|=2}
\left\|\pa_t^{\al_0}Z^{\al}\left[\widetilde{\rho},u,\widetilde{\ta}\right](s)\right\|_2^2
+\eps^2\sup\limits_{0\leq s\leq t}\sum\limits_{\al_0\leq 2,|\al|=2}
\left\|\na_x\pa_t^{\al_0}Z^{\al}\left[\widetilde{\rho},u,\widetilde{\ta}\right](s)\right\|_2^2
\\&+\sup\limits_{0\leq s\leq t}\sum\limits_{\al_0\leq 1}\eps^4\left\|\pa_t^{\al_0}\na^2[\rho,u,\ta]\right\|_{H_{co}^{2}}^2
+\eps^2\sum\limits_{\al_0\leq 3,|\al|=2}\int_0^t\|\pa_t^{\al_0}Z^{\al}\widetilde{\rho}\|_2^2ds
\\&+\eps^2\sum\limits_{\al_0\leq 2,|\al|=2}\int_0^t\left\|\na_x\pa_t^{\al_0}Z^{\al}\widetilde{\rho}(s)\right\|_2^2ds
+\eps^2\sum\limits_{\al_0\leq 3,|\al|=2}\int_0^t\left\|\na_x\pa_t^{\al_0}Z^{\al}\left[u,\widetilde{\ta}\right](s)\right\|_2^2ds
\\&+\eps^3\sum\limits_{\al_0\leq 1}\int_0^t\left\|\pa_t^{\al_0}\na^2_x\left[\widetilde{\rho},u,\widetilde{\ta}\right](s)\right\|_{H^2_{co}}^2ds
+\eps^3\sum\limits_{\al_0\leq1}\int_0^t\|\pa_t^{\al_0}\na^3_x[u,\ta]\|_{H^2_{co}}^2ds\\ \leq& CN(0)+C(\eps+\ka_0)N(t).
\end{split}
\end{equation}
To show \eqref{coneng}, we only prove the case of $\al_0=0$. For this, letting $|\al|=2$, taking the inner products of $Z^{\al}\eqref{NS}_1$,
$Z^{\al}\eqref{NS}_2$ and $Z^{\al}\eqref{NS}_3$ with $Z^\al\rho$, $Z^\al u$ and $Z^\al\ta$, respectively,
one has
\begin{equation}\label{rhoZ}
\begin{split}
(Z^{\al}\pa_t\rho,Z^\al\widetilde{\rho})&+(Z^{\al}(\na_x\widetilde{\rho}u),Z^{\al}\widetilde{\rho})
+(Z^{\al}(\widetilde{\rho}\na_x\cdot u),Z^{\al}\widetilde{\rho})
\\&+(\na_x\cdot Z^{\al}u,Z^{\al}\widetilde{\rho})+([Z^{\al},\na_x\cdot] u,Z^{\al}\widetilde{\rho})=0,
\end{split}
\end{equation}
\begin{align}\label{uZ}
(Z^\al&(\rho\pa_tu),Z^\al u)+(Z^\al(\rho u\cdot\na_xu),Z^\al u)+(\na_xZ^\al \widetilde{\rho},Z^\al u)
+([Z^\al,\na_x] \widetilde{\rho},Z^\al u)\notag\\&+(Z^\al\na_x \widetilde{\ta},Z^\al u)+(Z^\al(\widetilde{\ta}\na_x \widetilde{\rho}),Z^\al u)+(Z^\al(\widetilde{\rho}\na_x \widetilde{\ta}),Z^\al u)\\=&-\eps\left((\mu(\ta)\si(Z^\al u)),Z^\al u\right)
-\eps\left((\mu(\ta)[Z^\al ,\si](u),Z^\al u\right)-\eps\sum\limits_{|\al'|\geq1}C_{\al}^{\al'}\left(Z^{\al'}(\mu(\ta))Z^{\al-\al'}\si(u),Z^\al u\right)\notag,
\end{align}
and
\begin{equation}\label{taZ}
\begin{split}
\frac{3}{2}&(Z^\al(\rho\pa_t\ta),Z^\al\ta)+(Z^\al(u\cdot\na_x\ta),Z^\al\ta)+(Z^\al(\rho\ta \na_x\cdot u),Z^\al\ta)
\\=&\eps\left(\na_x\cdot\left\{\ka(\ta)\na_xZ^\al\ta\right\},Z^\al\ta\right)
+\eps\left(\na_x\cdot\left\{\ka(\ta)[Z^\al,\na_x]\ta\right\},Z^\al\ta\right)
\\&+\eps\sum\limits_{|\al'|\geq1}C_{\al}^{\al'}\left(\na_x\cdot\left\{Z^{\al'}\ka(\ta)Z^{\al-\al'}\na_x\ta\right\},Z^\al\ta\right)
\\&+\eps\left([Z^\al,\na_x\cdot]\left(\ka(\ta)\na_x\ta\right),Z^\al\ta\right)
+\frac{\eps}{2}(Z^\al\left\{\mu(\ta)\sigma(u):\sigma(u)\right\},Z^\al\ta),
\end{split}
\end{equation}
where $[\CA,\CB]$ denotes the commutator $[\CA,\CB]=\CA\CB-\CB\CA.$
Noticing that $Z^\al u|_{\pa\Om}=0$ and $Z^\al \ta|_{\pa\Om}=0$ for $\al>0$, and
\begin{align}\label{ZGn}
\|Zf\|_2\leq C\|\na_xf\|_2, \ Z=(Z_1,Z_2,Z_3), \ \textrm{for any}\ f\in H^1,
\end{align}
by using Lemma \ref{sob.ine} and the {\it a priori} assumption \ref{aps},
one gets from the summation of $\eps\eqref{rhoZ}$, $\eps\eqref{uZ}$ and $\eps\eqref{taZ}$ that
\begin{align}\label{Z2eng}
\eps\sum\limits_{|\al|=2}&\left\{\|Z^\al[\rho,u,\ta](t)\|^2\right\}
+\la\eps^2\sum\limits_{|\al|=2}\int_0^t\|\na_xZ^\al[u,\ta](s)\|^2ds\notag\\
\leq&C\eps\int_0^t\|\na_x[\rho,u,\ta](s)\|_{H^1}^2ds+CN(0)+C(\eps+\ka_0^2)N(t).
\end{align}
Moreover, we have by taking
the inner product of $\eps^2(Z^\al\eqref{NS}_1,\na_x\cdot Z^\al u)$ and
$\eps^2(\eqref{NS}_2,\na_xZ^\al\widetilde{\rho}/\rho)$ that for any $\eta>0$
\begin{equation}\label{Zrhodis}
\begin{split}
-\eps^2(Z^\al u,&\na_xZ^\al\widetilde{\rho})(t)+\la\eps^2\int_0^t\left\|\na_xZ^\al\widetilde{\rho}\right\|_2^2ds\\
\leq& C\eps^2|(Z^\al u,\na_xZ^\al\widetilde{\rho})(0)|
+C\eps^4\int_0^t\|\na_x^2Z^\al u\|_2^2ds+C(\ka_0^2\eps+\eps^2)\int_0^t\left\|\na_x\left[\rho,u,\widetilde{\ta}\right](s)\right\|_{H^1}^2ds
\\&+\la\ka_0\eps^2\sum\limits_{|\al|=2}\int_0^t\|\na_xZ^\al[u,\ta](s)\|^2ds.
\end{split}
\end{equation}
Next,
performing the similar calculations as for obtaining \eqref{p2u.p1}, employing Lemmas \ref{sob.ine} and \ref{Hdec} and the {\it a priori} assumption \ref{aps}, and using \eqref{ZGn} again, one has for $|\al|=2$
\begin{equation}\label{Zp2u}
\begin{split}
\eps^2(\na_x &\times Z^\al u,\na_x \times Z^\al u)+\la\eps\int_0^t\|\pa_tZ^\al u^2\|_2^2~ds\\
\leq& C\eps^2|(\na_x\times Z^\al u_0,\na_x\times Z^\al u_0)|+\eps^2|([Z^\al,\na_x \times\na_x \times] u, \pa_tZ^\al u)|+
\eps|(\pa_tZ^\al u^2,\pa_tZ^\al u^1)|
\\&+C\eps\sum\limits_{|\al'|\leq 1}\int_0^t\|Z^{\al'}\left\{\widetilde{\rho}\pa_tu\right\}\|_{H^1}^2ds
+C\eps\sum\limits_{|\al'|\leq 1}\int_0^t\|Z^{\al'}\left\{\rho u\cdot \na_xu\right\}\|_{H^1}^2ds
\\&+C\eps^3\sum\limits_{|\al'|\leq 1}\int_0^t\|Z^{\al'}\left\{\na_x\mu(\ta)\cdot\sigma(u)\right\}\|_{H^1}^2ds
+C\eps^3\sum\limits_{|\al'|\leq 1}\int_0^t\|Z^{\al'}\left\{(\mu(\ta)-\mu(1))\na_x\na_x\cdot u\right\}\|_{H^1}^2ds\\&+C\eps^3\sum\limits_{|\al'|\leq 1}\int_0^t\|Z^{\al'}\left\{(\mu(\ta)-\mu(1))\na_x\times\na_x\times u\right\}\|_{H^1}^2ds\\
\leq &C\eps|(\na_x\times Z^\al u_0,\na_x\times Z^\al u_0)|+(C\ka^2_0\eps^2+\eps^2)\int_0^t\left\|\na_x[\widetilde{\rho},u](s)\right\|_{H^1}^2ds
+C\eps^3\sum\limits_{\al'<\al}\int_0^t\|\na^2_xZ^{\al'} u(s)\|_{2}^2ds
\\&+C\eps^3\sum\limits_{|\al|=2}\int_0^t\|\na_xZ^\al[\rho,u,\ta](s)\|_{2}^2ds
+C\ka_0^2\eps^3\sum\limits_{|\al|\leq 2}\int_0^t\|\na^2_xZ^\al u(s)\|_{H^1}^2ds+C(\eps+\ka_0)N(t)\\
\leq &CN(0)+C(\eps+\ka_0)N(t)+C\eps^3\sum\limits_{\al'<\al}\int_0^t\|\na^2_xZ^{\al'} u(s)\|_{2}^2ds,
\end{split}
\end{equation}
where we have used the estimates
$|(\pa_tZ^\al u^2,\pa_tZ^\al u^1)|\lesssim\|\pa_t\na_xu\|^2_{H_{co}^1}$ for $|\al|=2.$

Similar as for obtaining \eqref{1drut}, we have for $|\al|=2$
\begin{equation*}\label{Z1drut}
\begin{split}
\frac{\eps^2}{2}\frac{d}{dt}&\|\na_xZ^\al\rho\|^2_2+\frac{\eps^2}{2}\frac{d}{dt}\|\na_x\cdot Z^\al u\|^2_2
+\frac{3\eps^2}{4}\frac{d}{dt}\|\na_xZ^\al\ta\|^2_2
\\&
+\eps^2(\na_xZ^\al(\na_x\rho \cdot u),\na_xZ^\al\rho)+\eps^2(\na_xZ^\al(\widetilde{\rho}\na_x\cdot u),\na_xZ^\al\rho)
+\eps^2(\na_xZ^\al(\tilde{\rho}\pa_tu),\na_x\na_x\cdot Z^\al u)\\
&+\eps^2(Z^\al(\rho u\cdot \na_xu),\na_x\na_x\cdot Z^\al u)
-\eps^2(Z^\al(\widetilde{\ta}\na_x \rho),\na_x\na_x\cdot Z^\al u)-\eps^2(Z^\al(\widetilde{\rho}\na_x \ta),\na_x\na_x\cdot Z^\al u) \\&+\eps^2(Z^\al(\na_x\mu(\ta)\cdot \si(u)),\na_x\na_x\cdot Z^\al u)+
\frac{4\eps^3}{3}\left(\mu(\ta)\na_x\na_x\cdot Z^\al u,\na_x\na_x\cdot Z^\al u\right)
\\&+\frac{4\eps^3}{3}\left(\mu(\ta)[Z^\al,\na_x\na_x\cdot] u,\na_x\na_x\cdot Z^\al u\right)
+\frac{4\eps^3}{3}\sum\limits_{|\al'|\geq1}C_{\al}^{\al'}\left(Z^{\al'}(\mu(\ta))Z^{\al-\al'}\na_x\na_x\cdot  u,\na_x\na_x\cdot Z^\al u\right)
\\&
-\eps^3\left(Z^\al(\mu(\ta)\na_x\times\na_x\times u),\na_x\na_x\cdot Z^\al u\right)+\frac{3\eps^2}{2}(\na_xZ^\al(\tilde{\rho}\pa_t\ta),\na_xZ^\al\ta)\\
&+\eps^2(\na_xZ^\al(\rho u\cdot\na_x\ta),\na_xZ^\al\ta)+\eps^2(\na_xZ^\al\{(\rho\ta-1)\na_x\cdot u\},\na_xZ^\al\ta)
\\&+\eps^2(\na_x[Z^\al,\na_x\cdot] u,\na_xZ^\al\ta)+\eps^2(\na_x\na_x\cdot Z^\al u,[\na_x,Z^\al]\ta)
\\&-\eps^3(\na_x Z^\al(\na_x\ka(\ta)\cdot\na_x\ta),\na_xZ^\al\ta)
-\eps^3\sum\limits_{|\al'|\geq1}C_{\al}^{\al'}\left(\na_x\{Z^{\al'}\ka(\ta)Z^{\al-\al'}\Delta_x\ta\},\na_xZ^\al\ta\right)
\\&-\eps^3(\na_x(\ka(\ta)Z^\al\Delta_x\ta),\na_xZ^\al\ta)
-\frac{\eps^3}{2}(\na_xZ^\al(\mu(\ta)\sigma(u):\sigma(u)),\na_xZ^\al\ta)=0,\\
\end{split}
\end{equation*}
which further implies
\begin{equation}\label{Z1drut.int}
\begin{split}
\eps^2&\|\na_xZ^\al[\rho,u,\ta](t)\|^2_2
+\la\eps^3\int_0^t\|\na_x\na_x\cdot Z^\al u(s)\|_2^2ds+\la\eps^3\int_0^t\|\na^2_x Z^\al \ta(s)\|_2^2ds
\\ \leq& C\eps^3\sum\limits_{|\al'|\leq|\al|-1}\int_0^t\|\na_x^2 Z^{\al'} \ta(s)\|_2^2ds
+C\eps^3\sum\limits_{|\al|\leq2}\int_0^t\|\na_x^2 Z^\al u(s)\|_2^2ds+CN(0)+C(\eps+\ka_0)N(t).
\end{split}
\end{equation}
Furthermore, as to the estimates for $\na_x^2Z^\al\rho$ with $|\al|=1,2$,
we have as for obtaining
\eqref{p1u.p1} that
\begin{equation*}\label{Z2p1u.p1}
\begin{split}
\frac{2\mu(1)\eps^4}{3}&\frac{d}{dt}\|Z^\al\na_x^2\rho\|^2_2
+\frac{4\mu(1)\eps^4}{3}(Z^\al\na^2_x(\na_x\rho\cdot u),Z^\al\na_x^2\rho)
+\frac{4\mu(1)\eps^4}{3}(Z^\al\na^2_x(\widetilde{\rho}\na_x\cdot u),Z^\al\na_x^2\rho)\\
&+\eps^3(Z^\al \na_xP_1\{\rho(\pa_tu+u\cdot \na_xu)\},Z^\al\na^2_x\rho)+\eps^3(\ta Z^\al\na_x^2\rho,Z^\al\na^2_x\rho)
+\eps^3(Z^\al\na_x(\na_x\ta\rho),Z^{\al}\na_x^2\rho)\\&+\eps^3(Z^\al(\na_x\ta\na_x\rho),Z^\al\na^2_x\rho)
+\eps^3\sum\limits_{|\al'|\geq1}C_{\al}^{\al'}(Z^{\al'}\ta Z^{\al-\al'}\na_x^2\rho,Z^\al\na^2_x\rho)
\\&-\frac{4\eps^4}{3}(Z^\al \na_xP_1\{(\mu(\ta)-\mu(1))\na_x\na_x\cdot u\},Z^\al\na^2_x\rho)\\&-
\eps^4(Z^\al\na_xP_1\{\na_x\mu(\ta)\cdot \si(u)\},Z^\al\na^2_x\rho)
+\frac{4\eps^4}{3}(Z^\al\na_xP_1\{(\mu(\ta)-\mu(1))\na_x\times\na_x\times u\},Z^\al\na^2_x\rho),
\end{split}
\end{equation*}
which yields
\begin{equation}\label{Z2p1u.p2}
\begin{split}
\eps^4&\sum\limits_{1\leq|\al|\leq2}\|Z^\al\na_x^2\rho\|^2_2+\eps^3\sum\limits_{1\leq|\al|\leq2}\int_0^t\|Z^\al\na_x^2\rho\|^2ds\\
\leq& C\ka_0^2\eps^5\sum\limits_{|\al|\leq2}\int_0^t\|\na_x^3 Z^\al u(s)\|_2^2ds+C\eps^3\sum\limits_{|\al|\leq2}\int_0^t\|\na_x^2 Z^\al \ta(s)\|_2^2ds+CN(0)+C(\eps+\ka_0)N(t).
\end{split}
\end{equation}
Finally, elliptic estimates as for obtaining \eqref{na2u.ep} and \eqref{na2ta} leads us to
\begin{equation}\label{Zna2u}
\begin{split}
\eps^4\sum\limits_{|\al|\leq 2}\|\na_x^2 \pa_t^{\al_0}Z^\al u\|_2^2
\lesssim& \eps^4\sum\limits_{|\al|\leq 2}\|\pa_t\pa_t^{\al_0}Z^\al\widetilde{\rho}\|^2_{H^1}+\ka^2_0\eps^4\sum\limits_{|\al|\leq 2}\left\|\na_x\pa_t^{\al_0}Z^\al[\widetilde{\rho},u]\right\|_2^2
\\
&+\ka_0^2\eps^5\sum\limits_{|\al|\leq 2}\left\|\na^2_x\pa_t^{\al_0}Z^\al[\widetilde{\rho},u](s)\right\|_2^2
+\eps^2\sum\limits_{|\al|\leq 2}\|\pa_t\pa_t^{\al_0}Z^\al u^2\|_2^2,
\end{split}
\end{equation}
\begin{equation}\label{Zna2ta}
\begin{split}
\eps^4\sum\limits_{|\al|\leq 2}\|\na_x^2 \pa_t^{\al_0}Z^\al\ta\|_2^2
\lesssim& \eps^2\sum\limits_{|\al|\leq 2}\|[\pa_t\pa_t^{\al_0}Z^\al\ta,\na_x\pa_t^{\al_0}Z^\al u]\|^2_2
\\&+\ka^2_0\eps^3\sum\limits_{|\al|\leq 2}\left\|\na_x\pa_t^{\al_0}Z^\al[u,\ta]\right\|_2^2
+\ka_0^2\eps^4\sum\limits_{|\al|\leq 2}\left\|\na^2_x\pa_t^{\al_0}Z^\al\ta\right\|_2^2,
\end{split}
\end{equation}
\begin{equation}\label{Zna3u}
\begin{split}
\eps^4\sum\limits_{|\al|\leq 2}\|\na_x^3 \pa_t^{\al_0}Z^\al u\|_2^2
\lesssim& \eps^2\sum\limits_{|\al|\leq 2}\|\pa_t\pa_t^{\al_0}Z^\al u\|^2_{H^1}
+\eps^2\sum\limits_{|\al|\leq 2}\left\|\na_x\pa_t^{\al_0}Z^{\al}[\widetilde{\rho},\ta]\right\|_{H^1}^2
\\&+\ka^2_0\eps^3\sum\limits_{|\al|\leq 2}\left\|\na_x\pa_t^{\al_0}Z^\al u\right\|_{H^1}^2
+\ka_0^2\sum\limits_{|\al|\leq 2}\eps^4\left\|\na^3_x\pa_t^{\al_0}u\right\|_2^2,
\end{split}
\end{equation}
and
\begin{equation}\label{Zna3ta}
\begin{split}
\eps^4\sum\limits_{|\al|\leq 2}\|\na_x^3 \pa_t^{\al_0}Z^\al\ta\|_2^2
\lesssim& \eps^2\sum\limits_{|\al|\leq 2}\|[\pa_t\pa_t^{\al_0}Z^\al\ta,\na_x\pa_t^{\al_0}Z^\al u]\|^2_{H^1}\\&+\ka^2_0\eps^4\sum\limits_{|\al|\leq 2}\left\|\na_x\pa_t^{\al_0}Z^\al[u,\ta]\right\|_2^2
+\ka_0^2\eps^4\sum\limits_{|\al|\leq 2}\left\|\na^3_x\pa_t^{\al_0}Z^\al[\ta,u](s)\right\|_2^2.
\end{split}
\end{equation}
Thereupon, \eqref{coneng} follows from a linear combination of \eqref{basiceng}, \eqref{Z2eng}, \eqref{Zrhodis}, \eqref{Zp2u}, \eqref{Z1drut.int}, \eqref{Z2p1u.p2},
\eqref{Zna2u}, \eqref{Zna2ta}, \eqref{Zna3u} and \eqref{Zna3ta}. This completes the proof of Theorem \ref{NSsol}.
\end{proof}

\section{$L^2-L^6$ estimates}\label{l2-l6th}
This section is dedicated to the $L^2-L^6$ estimates for the remainder $R$.

In view of \eqref{ID}, \eqref{dbd}, \eqref{epn} and \eqref{R}, we see that $R$ satisfies the initial boundary value problem:
\begin{eqnarray}\label{Rib}
\left\{\begin{split}
&\pa_tR+v\cdot\na_xR+\frac{1}{\eps}L_MR=\eps^{1/2}Q(R,R)+Q(R,G)+Q(G,R)\\
&\qquad\qquad\qquad\qquad\qquad\qquad+\eps^{-1/2}Q(G,G)-\eps^{-1/2}(\pa_tG+v\cdot \na_xG+H),\\
&R(0,x,v)\eqdef R_0(x,v)=-\eps^{-1/2} G(0,x,v),\\
&R_{-}=P_\ga R+\eps^{-1/2}r,
\end{split}\right.
\end{eqnarray}
where
$$P_\ga R=M^w\int_{n(x)\cdot v'>0}R(t,x,v')(n(x)\cdot v')dv',$$
and $r=P_\ga G-G.$
To solve \eqref{Rib}, we start from the following linear problem
\begin{eqnarray}\label{Rln}
\left\{\begin{split}
&\pa_tR+v\cdot\na_xR+\frac{1}{\eps}L_MR=g,\\
&R(0,x,v)= R_0(x,v),\\
&R_{-}=
P_\ga R+\eps^{-1/2}r,
\end{split}\right.
\end{eqnarray}
where $g=g(t,x,v)\in \ker^{\bot}(L_M)$ and $\ker^{\bot}(L_M)$ is the orthogonal complement of $\ker(L_M)$ which is the kernel of the linear operator $L_M$ generated by
\begin{eqnarray*}
\chi^{M}_0\left(v;\rho,u,\ta\right)&\equiv& \frac{1}{\sqrt{\rho}}M,\\[2mm]
\chi^{M}_{i}\left(v;\rho,u,\ta\right)&\equiv&\frac{v_i-u_i}
{\sqrt{\rho\ta}}M,\ \ i=1,2,3,\\[2mm]
\chi^{M}_{4}\left(v;\rho,u,\ta\right)
&\equiv&\frac{1}{\sqrt{6\rho}}\left(\frac{|v-u|^2}
{\ta}-3\right)M.\\[2mm]
\end{eqnarray*}
One can further define the macroscopic projection $P^M_0$ and microscopic projection $P^M_1$ as follows
\begin{equation}\label{P0R}
\begin{split}
P_0^M R=&\frac{a(t,x)}{\sqrt{\rho}}\chi^{M}_0+\sum\limits_i\frac{b_i(t,x)}{\sqrt{\rho\ta}}\chi^{M}_i+\frac{c(t,x)}{\ta}\sqrt{\frac{6}{\rho}}\chi^{M}_4\\
=&\frac{a(t,x)}{\rho}M+\frac{b(t,x)\cdot (v-u)}{\rho\ta}M+\frac{c(t,x)}{\rho\ta}\left(\frac{|v-u|^2}
{\ta}-3\right)M,
\end{split}
\end{equation}
and $P_1^MR=R-P_0^MR.$

\begin{lemma}\label{l2es}
Assume $R(t,x,v)$ is a global solution of the initial boundary value problem \eqref{Rln}, for any $\eta>0$, there exists $C_\eta>0$ depends on $\eta$ such that
\begin{equation}\label{Rl2}
\begin{split}
\frac{1}{2}\frac{d}{dt}&\int_{\Om\times \R^3}\sum\limits_{\al_0\leq1}\frac{(\pa_t^{\al_0}R)^2}{M_-}dxdv+\frac{\de_0-3\eta}{\eps}\sum\limits_{\al_0\leq1}\int_{\Om\times \R^3}\frac{\nu_M(P_1^M\pa^{\al_0}_tR)^2}{M_-}dxdv\\&+\frac{1}{2}\sum\limits_{\al_0\leq1}\int_{\ga_+}\frac{|(I-P_\ga)\pa^{\al_0}_tR|^2}{M_-}d\ga\\
\leq& C_\eta\eps\sum\limits_{\al_0\leq1}\int_{\Om\times \R^3}\frac{\nu_M^{-1}(\pa^{\al_0}_tg)^2}{M_-}dxdv+C_\eta\ka_0^2\eps\sum\limits_{\al_0\leq1}\|\pa_t^{\al_0}[a,b,c]\|_2^2
+\frac{1}{2\eps}\sum\limits_{\al_0\leq1}\int_{\ga_-}\frac{|\pa^{\al_0}_tr|^2}{M_-}d\ga,
\end{split}
\end{equation}
here $\de_0$ is given in Lemma \ref{co.est.}.
\end{lemma}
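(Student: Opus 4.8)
The plan is to run a weighted $L^2$ energy estimate directly on \eqref{Rln} and on its first time-derivative, testing against $\pa_t^{\al_0}R/M_-$ with the \emph{fixed} global Maxwellian weight $M_-^{-1}$ rather than the moving weight $M^{-1}=M_{[\rho,u,\ta]}^{-1}$; the mismatch between these two weights is what produces the $\ka_0$-dependent term on the right of \eqref{Rl2}, and it is controlled by the smallness $\|[\rho-1,u,\ta-1]\|_{L^\infty_x}\lesssim\ka_0\eps$ coming from Theorem \ref{NSsol}. First, for $\al_0\le1$, apply $\pa_t^{\al_0}$ to \eqref{Rln}, producing $\pa_t(\pa_t^{\al_0}R)+v\cdot\na_x\pa_t^{\al_0}R+\tfrac1\eps L_M\pa_t^{\al_0}R=\pa_t^{\al_0}g-\tfrac1\eps[\pa_t^{\al_0},L_M]R$, together with the boundary relation $\pa_t^{\al_0}R_-=P_\ga\pa_t^{\al_0}R+\eps^{-1/2}\pa_t^{\al_0}r$ (valid because $P_\ga$, $M^w$, $n(x)$ are $t$-independent). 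Multiplying by $\pa_t^{\al_0}R/M_-$ and integrating over $\Om\times\R^3$, and using that $M_-$ is $x$-independent so that the streaming term yields the clean flux $\tfrac12\int_{\pa\Om\times\R^3}(n\cdot v)(\pa_t^{\al_0}R)^2/M_-$, one gets
\[
\tfrac12\tfrac{d}{dt}\Big\|\tfrac{\pa_t^{\al_0}R}{\sqrt{M_-}}\Big\|_2^2+\tfrac1\eps\Big(L_M\pa_t^{\al_0}R,\tfrac{\pa_t^{\al_0}R}{M_-}\Big)+\tfrac12\int_{\pa\Om\times\R^3}(n\cdot v)\tfrac{(\pa_t^{\al_0}R)^2}{M_-}=\Big(\pa_t^{\al_0}g,\tfrac{\pa_t^{\al_0}R}{M_-}\Big)-\tfrac1\eps\Big([\pa_t^{\al_0},L_M]R,\tfrac{\pa_t^{\al_0}R}{M_-}\Big),
\]
which is then summed over $\al_0\le1$.

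For the collision term, write $1/M_-=1/M+(1/M_--1/M)$. The $1/M$ part is handled by the coercivity of $L_M$ from Lemma \ref{co.est.}, namely $(L_Mh,h/M)\ge\de_0\|\nu_M^{1/2}P_1^Mh/\sqrt M\|_2^2$. In the remainder $(L_Mh,h(1/M_--1/M))$ one uses $L_Mh=L_MP_1^Mh$ together with the pointwise bound $|1/M_--1/M|\lesssim\ka_0\eps\,\langle v\rangle^{N}/M_-$ (here one must also exploit that $\ta$ stays within $O(\ka_0\eps)$ of $1$); decomposing $h=P_0^Mh+P_1^Mh$ and applying weighted Cauchy--Schwarz and Young, the micro--micro piece is $\lesssim\ka_0\eps\cdot\eps^{-1}\|\nu_M^{1/2}P_1^Mh/\sqrt{M_-}\|_2^2$, absorbed by a small $\eta$ once $\eps$ is small, while the micro--macro piece is $\lesssim\tfrac\eta\eps\|\nu_M^{1/2}P_1^Mh/\sqrt{M_-}\|_2^2+C_\eta\ka_0^2\eps\,\|P_0^Mh\|_2^2$, where the $\eps^{-1}$ prefactor is exactly killed by the $\ka_0\eps$ gain and the residue lands on $[a,b,c]$ with the good weight $\ka_0^2\eps$. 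The same splitting lets us replace $\sqrt M$ by $\sqrt{M_-}$ in the coercive term at the cost of one more $\eta$, which accounts for the coefficient $\de_0-3\eta$. For the source term, since $g\in\ker^{\bot}(L_M)$ we have $(g,P_0^MR/M)=0$, so $(g,R/M_-)=(g,P_1^MR/M_-)+(g,R(1/M_--1/M))$; the first term is $\le\tfrac\eta\eps\|\nu_M^{1/2}P_1^MR/\sqrt{M_-}\|_2^2+C_\eta\eps\|\nu_M^{-1/2}g/\sqrt{M_-}\|_2^2$ by weighted Young and the second is again $O(\ka_0\eps)$ by the weight bound. For $\al_0=1$ the extra commutator $[\pa_t,L_M]R=(\pa_tL_M)R$ is proportional to $\pa_tM$, hence $O(\|\pa_t[\rho,u,\ta]\|_{L^\infty_x})=O(\ka_0\eps)$ by Theorem \ref{NSsol}; this gains the factor $\eps$ needed to beat the $\eps^{-1}$ prefactor, and — after noting $(\pa_tL_M)R$ is, up to $O(\ka_0\eps)$ errors, microscopic and $1/M$-orthogonal to $\ker L_M$ — the resulting terms are absorbed into the dissipation and into $C_\eta\ka_0^2\eps\|\pa_t[a,b,c]\|_2^2$.

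The boundary term is treated via the structure of the diffuse-reflection operator. Because $M^w$ coincides with $M_-$ up to a fixed constant, $P_\ga$ obeys the exact identities $\int_{\ga_-}|P_\ga f|^2/M_-=\int_{\ga_+}|P_\ga f|^2/M_-$ and $\int_{\ga_+}|f|^2/M_-=\int_{\ga_+}|P_\ga f|^2/M_-+\int_{\ga_+}|(I-P_\ga)f|^2/M_-$, so, with $f=\pa_t^{\al_0}R$ and the boundary relation above,
\[
\tfrac12\int_{\pa\Om\times\R^3}(n\cdot v)\tfrac{f^2}{M_-}=\tfrac12\int_{\ga_+}\tfrac{|(I-P_\ga)f|^2}{M_-}-\eps^{-1/2}\int_{\ga_-}\tfrac{(P_\ga f)\,\pa_t^{\al_0}r}{M_-}-\tfrac1{2\eps}\int_{\ga_-}\tfrac{(\pa_t^{\al_0}r)^2}{M_-}.
\]
The cross term is estimated by $\eps^{-1/2}|P_\ga f|_{2,-}|\pa_t^{\al_0}r|_{2,-}\le\eps^{-1/2}|f|_{2,+}|\pa_t^{\al_0}r|_{2,-}$ and Young's inequality; moreover, for the boundary data $r=P_\ga G-G$ occurring in \eqref{Rib} one has the zero-net-flux property $\int_{n\cdot v<0}r\,|n\cdot v|\,dv=\int_{\R^3}G(n\cdot v)\,dv=0$, because $G=L_M^{-1}(\cdots)\in\ker^{\bot}(L_M)$ has vanishing density and momentum, so $P_\ga f$ is $L^2(d\ga/M_-)$-orthogonal to $r$ and this cross term drops out, leaving exactly the boundary contributions displayed in \eqref{Rl2}.

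Assembling the four pieces, summing over $\al_0\le1$, and then choosing $\eta$ small followed by $\ka_0$ and $\eps$ small so as to absorb the $\eta/\eps$-dissipation terms and the $O(\ka_0\eps)$ corrections, yields \eqref{Rl2}. I expect the main obstacle to be the collision term: the linearized operator is centered at the moving local Maxwellian $M_{[\rho,u,\ta]}$, while the only weight for which global coercivity and control are available is the fixed $M_-$, and the correction carries the dangerous singular prefactor $\eps^{-1}$. One has to trade this mismatch against $\|[\rho-1,u,\ta-1]\|_{L^\infty_x}\lesssim\ka_0\eps$ from Theorem \ref{NSsol} sharply enough that the $\eps^{-1}$ is beaten and the residual error falls only on the macroscopic modes $[a,b,c]$ with the harmless weight $\ka_0^2\eps$; a secondary technical point is the precise bookkeeping of the diffuse-reflection boundary term, which relies on the matching of $M^w$ with $M_-$ and on the vanishing flux of the boundary source $r$.
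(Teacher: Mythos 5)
Your proposal is correct and follows essentially the same route as the paper: a weighted $L^2$ energy identity in the fixed weight $M_-^{-1}$ for $\pa_t^{\al_0}R$ ($\al_0\le1$), coercivity of $L_M$ combined with control of the $M$ versus $M_-$ mismatch through $\|[\rho-1,u,\ta-1]\|_{L^\infty_x}\lesssim\ka_0\eps$ so that the $\eps^{-1}$ prefactor is beaten and the residue lands on $\ka_0^2\eps\|[a,b,c]\|_2^2$, the commutator $L_{\pa_tM}$ handled via $\|\pa_t[\rho,u,\ta]\|_{L^\infty_x}=O(\ka_0\eps)$, and the diffuse-reflection boundary algebra with the vanishing cross term (which the paper asserts and you correctly justify via the zero net flux of $r=P_\ga G-G$, using $G\in\ker^{\bot}(L_M)$). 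The only cosmetic difference is that the paper invokes Lemma \ref{co.est.} directly with the weight $\widetilde M=M_-$ rather than first obtaining coercivity in the $M$-weight and converting.
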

\begin{proof}
We prove $\al_0=1$ only, because the case $\al_0=0$ is similar and much easier. Act $\pa_t$ to $\eqref{Rln}_1$, take inner product with $\frac{\pa_tR}{M_-}$ over $\Om\times \R^3$  and use Green's formula to obtain
\begin{equation}\label{Rtip}
\begin{split}
\frac{1}{2}\frac{d}{dt}&\int_{\Om\times\R^3}\frac{(\pa_tR)^2}{M_-}dxdv
+\frac{1}{2}\int_{\pa\Om\times\R^3}\frac{v\cdot n(\pa_tR)^2}{M_-}dS_xdv\\&+\frac{1}{\eps}\int_{\Om\times\R^3}\frac{\pa_tRL_MP_1^M\pa_tR}{M_-}dxdv
+\frac{1}{\eps}\int_{\Om\times\R^3}\frac{\pa_tRL_{\pa_tM}P_1^MR}{M_-}dxdv=\int_{\Om\times\R^3}\frac{\pa_tg\pa_t R}{M_-},
\end{split}
\end{equation}
where
$L_{\pa_tM}R=-\left\{Q(\pa_tM,R)+Q(R,\pa_tM)\right\}.$

Notice that on the boundary
$$
\pa_tR_{-}=P_\ga \pa_tR+\eps^{-1/2}\pa_tr,
$$
and $M_-=\frac{1}{\sqrt{2\pi}}M^w$, moreover the difference of $M$ and $M_-$ is small in the sense
\begin{equation}\label{dfM}
|M-M_-|\lesssim|[\rho-1,u,\ta-1]|M^{1-\beta},\ 0<\beta\ll1.
\end{equation}
We now perform the calculations for the second, third and fourth term in the left hand side of \eqref{Rtip} as follows.
For the second term,
\begin{equation}\label{bdl2}
\begin{split}
\frac{1}{2}\int_{\pa\Om\times\R^3}&\frac{v\cdot n(\pa_tR)^2}{M_-}dS_xdv\\=&\frac{1}{2\sqrt{2\pi}}\int_{\pa\Om\times\R^3}\frac{v\cdot n(\pa_tR)^2}{M^w}dS_xdv\\
=&\frac{1}{2\sqrt{2\pi}}\int_{\ga_+}\frac{v\cdot n(\pa_tR)^2}{M^w}dS_xdv+\frac{1}{2\sqrt{2\pi}}\int_{\ga_-}\frac{v\cdot n(P_\ga \pa_tR+\eps^{-1/2}\pa_tr)^2}{M^w}dS_xdv
\\
=&\frac{1}{2\sqrt{2\pi}}\int_{\ga_+}\frac{v\cdot n((I-P_\ga)\pa_tR)^2}{M^w}dS_xdv+\frac{1}{2\eps\sqrt{2\pi}}\int_{\ga_-}\frac{v\cdot n(\pa_tr)^2}{M^w}dS_xdv,
\end{split}
\end{equation}
here we also used the fact that
$$
-\int_{\ga_+}\frac{v\cdot n(P_\ga\pa_tR)^2}{M^w}dS_xdv=\int_{\ga_-}\frac{v\cdot n(P_\ga\pa_tR)^2}{M^w}dS_xdv,
$$
and
$$
\int_{\ga_-}\frac{v\cdot n(P_\ga\pa_tR)\pa_t r}{M^w}dS_xdv=0.
$$
By virtue of \eqref{dfM}, Lemmas \ref{co.est.} and \ref{nopp1}, for $\frac{\ta}{2}<1$, we have by using $\int_{\Om\times\R^3}\frac{(P_0^M\pa_tR)L_MP_1^M\pa_tR}{M}dxdv=0$
\begin{equation}\label{col2}
\begin{split}
\frac{1}{\eps}\int_{\Om\times\R^3}&\frac{\pa_tRL_M\pa_tR}{M_-}dxdv
\\=&\frac{1}{\eps}\int_{\Om\times\R^3}\frac{P_1^M\pa_tRL_MP_1^M\pa_tR}{M_-}dxdv
+\frac{1}{\eps}\int_{\Om\times\R^3}\frac{(M-M_-)P_0^M\pa_tRL_MP_1^M\pa_tR}{MM_-}dxdv
\\ \geq&\frac{\de_0}{\eps}\int_{\Om\times \R^3}\frac{\nu_M(P_1^M\pa_tR)^2}{M_-}dxdv-\frac{\eta}{\eps}\int_{\Om\times \R^3}\frac{\nu_M(P_1^M\pa_tR)^2}{M_-}dxdv\\
&-\frac{C_\eta}{\eps}\sum\limits_{\al_0\leq1}\|\pa_t^{\al_0}[a,b,c]\pa_t^{\al_0}[\rho-1,u,\ta-1]\|_2^2.
\end{split}
\end{equation}
Thanks to Theorem \ref{NSsol} and Sobolev's inequality \eqref{sobinep1}, the last term in the right hand side of \eqref{col2} is majorized by
\begin{equation}\label{col2p1}
\begin{split}
\frac{C_\eta}{\eps}&\sum\limits_{\al_0\leq1}\|\pa_t^{\al_0}[a,b,c]\pa_t^{\al_0}[\rho-1,u,\ta-1]\|_2^2\\
\leq& \frac{C_\eta}{\eps}\sum\limits_{\al_0\leq1}\|\pa_t^{\al_0}[a,b,c]\|_2^2\|\na_x\pa_t^{\al_0}[\rho-1,u,\ta-1]\|_{H^1_{co}}
\|\pa_t^{\al_0}[\rho-1,u,\ta-1]\|_{H^2_{co}}\\
\leq& \ka_0^2C_\eta\eps\sum\limits_{\al_0\leq1}\|\pa_t^{\al_0}[a,b,c]\|_2^2.
\end{split}
\end{equation}
By Cauchy-Schwarz's inequality, Lemma \ref{nopp1}, Sobolev's inequality \eqref{sobinep1} and Theorem \ref{NSsol}, the fourth term is bounded by
\begin{equation}\label{col2p2}
\begin{split}
\frac{1}{\eps}\int_{\Om\times\R^3}&\frac{\pa_tRL_{\pa_tM}P_1^MR}{M_-}dxdv
\\=&\frac{1}{\eps}\int_{\Om\times\R^3}\frac{P_1^M\pa_tRL_{\pa_tM}P_1^MR}{M_-}dxdv
+\frac{1}{\eps}\int_{\Om\times\R^3}\frac{P_0^M\pa_tRL_{\pa_tM}P_1^MR}{M_-}dxdv
\\ \leq&\left(\frac{\eta}{\eps}+\ka_0^2C_\eta\eps\right)\sum\limits_{\al_0\leq1}\int_{\Om\times \R^3}\frac{\nu_M(P_1^M\pa^{\al_0}_tR)^2}{M_-}dxdv+\ka_0^2C_\eta\eps\sum\limits_{\al_0\leq1}\|\pa_t^{\al_0}[a,b,c]\|_2^2.
\end{split}
\end{equation}
For the term in the right hand side of \eqref{Rtip}, since $g\in \ker(L_M)$, by Cauchy-Schwarz's inequality, we have
\begin{equation}\label{gip}
\begin{split}
\int_{\Om\times\R^3}\frac{\pa_tg\pa_t R}{M_-}=&\int_{\Om\times\R^3}\frac{(M-M_-)\pa_tgP_0^M\pa_t R}{M_-M}+\int_{\Om\times\R^3}\frac{\pa_tgP_1^M\pa_t R}{M_-}\\
\leq&\frac{\eta}{\eps}\int_{\Om\times \R^3}\frac{\nu_M(P_1^M\pa_tR)^2}{M_-}dxdv+C_\eta\eps\int_{\Om\times \R^3}\frac{\nu_M^{-1}(\pa_tg)^2}{M_-}dxdv
\\&+\ka_0^2C_\eta\eps\sum\limits_{\al_0\leq1}\|\pa_t^{\al_0}[a,b,c]\|_2^2.
\end{split}
\end{equation}
Finally, \eqref{Rl2} for $\al_0=1$ follows from \eqref{Rtip}, \eqref{bdl2}, \eqref{col2}, \eqref{col2p1}, \eqref{col2p2} and \eqref{gip}. This completes the proof of Lemma \ref{l2es}.
\end{proof}

Now we turn to deduce the $L^2$ and $L^6$ estimates for the macroscopic part $[a,b,c](t,x)$. For results in this direction, we have
\begin{lemma}\label{l2-l6}
Let $R$ be a solution to \eqref{Rln} in the sense of distribution. Then there exists $\mathbb{G}(t)$ satisfying $|\mathbb{G}(t)|\leq \sum\limits_{\al_0\leq1}\|\pa_t^{\al_0}R/\sqrt{M_-}\|_2^2$ such that
\begin{equation}\label{P1-P2}
\begin{split}
\eps\sum\limits_{\al_0\leq1}&\int_0^t\|\pa_t^{\al_0}[a,b,c]\|_2^{2}ds\\ \lesssim&  \eps|\mathbb{G}(t)-\mathbb{G}(0)|+\eps^{-1}\sum\limits_{\al_0\leq1}\int_0^t\int_{\Om\times \R^3}\frac{\nu_M(P_1^M\pa^{\al_0}_tR)^2}{M_-}dxdvds
\\&+\eps\sum\limits_{\al_0\leq1}\int_0^t\int_{\ga_+}\frac{|(I-P_\ga)\pa_t^{\al_0}R|^2}{M_-}d\ga ds
+\sum\limits_{\al_0\leq1}\int_0^t\int_{\ga_-} \frac{|\pa_t^{\al_0}r|^2}{M_-} d\ga ds\\&+\eps\sum\limits_{\al_0\leq1}\int_0^t\int_{\Om\times \R^3}\frac{\nu_M^{-1}\pa_t^{\al_0}g^2}{M_-}dxdvds,
\end{split}
\end{equation}
and moreover it holds that for $\eta, \eta'>0$
\begin{equation}\label{P1-P6}
\begin{split}
\eps&\sup\limits_{0\leq s\leq t}\| [a,b,c](s)\|^2_{6} \\
 \lesssim&  \int_{\ga_+}\frac{|(I-P_\ga)R_0|^2}{M_-}d\ga+\eps^{-1}\int_{\Om\times \R^3}\frac{(P_1^MR_0)^2}{M_-}dxdv
 +\ka_0^2\eps\int_0^t\|[a,b,c](s)\|_2^2ds
 \\&+ \eps^{-1}\sum\limits_{\al_0\leq1}\int_0^t\int_{\Om\times \R^3}\frac{\nu_M(P_1^M\pa^{\al_0}_tR)^2}{M_-}dxdvds
 +C_{\eta,\eta'}\sum\limits_{\al_0\leq1}\int_0^t\int_{\ga_+}\frac{|(I-P_\ga)\pa^{\al_0}_tR|^2}{M_-}d\ga ds\\&+(\eta+\eta'+\eps)\eps^2\sup\limits_{0\leq s\leq t}\left\|\frac{R(s)}{\sqrt{M_-}}\right\|_\infty^2
+\eps\sup\limits_{0\leq s\leq t}\int_{\Om\times \R^3}\frac{\nu_M^{-1}g^2}{M_-}dxdv
+\sup\limits_{0\leq s\leq t}\|\na_x[u,\ta](s)\|_{H^1}^2.
\end{split}\end{equation}
\end{lemma}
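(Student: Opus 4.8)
The plan is to establish \eqref{P1-P2} and \eqref{P1-P6} by a macroscopic estimate argument of the type pioneered in the macro-micro decomposition framework (see e.g.~\cite{LYY, Liu-Yang-Yu-Zhao-2006}), adapted to the local Maxwellian $M = M_{[\rho,u,\ta]}$ and to the bounded domain with diffusive boundary and the singular boundary term $\eps^{-1/2}r$. First I would derive the local conservation laws / moment equations satisfied by $[a,b,c]$: testing \eqref{Rln} against the collision invariants $\chi^M_j$ produces evolution equations for $a,b,c$ whose right-hand sides involve $\pa_t$ and $\na_x$ of the micro part $P_1^M R$, the source $g$, and extra commutator terms coming from $M$ depending on $(t,x)$ (these are controlled using Theorem~\ref{NSsol} and Sobolev embedding, exactly as in \eqref{col2p1}). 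To get the $L^2$ dissipation for $[a,b,c]$ I would construct an auxiliary test function $\Psi$ built out of $a,b,c$ and certain velocity-weighted functions against $M$ (the standard ``Kawashima-type'' or ``$\Psi$-vector'' construction), so that $\int (v\cdot\na_x R)\Psi$ reproduces $\|\na_x$ of a potential$\|$, while the remaining terms are controlled by the micro dissipation. The quantity $\mathbb{G}(t)$ in the statement is precisely the resulting interaction functional $\sim \sum_{\al_0\le 1}(\pa_t^{\al_0}R, \text{test function})$, whose size is bounded by $\sum_{\al_0\le1}\|\pa_t^{\al_0}R/\sqrt{M_-}\|_2^2$; integrating the resulting differential inequality in time over $[0,t]$ and applying Poincar\'e on $\Om$ yields \eqref{P1-P2}. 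Here I would carry the time derivative $\al_0\le 1$ along throughout, since $\pa_t^{\al_0}$ of the moment equations has the same structure.

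Next, for \eqref{P1-P6}, the plan is to run a dual / elliptic argument in the spirit of \cite{Gu16}: the macroscopic quantities $[a,b,c]$ satisfy (after eliminating time derivatives using the moment equations) elliptic-type equations with the micro part, the source, and boundary data as inhomogeneities, so one can bound $\|[a,b,c]\|_6$ by $\|[a,b,c]\|_{W^{1,6/5}}$-type dual pairings. Concretely I would take a test function $\varphi$ realizing the $L^6$ norm (i.e.~pair against an arbitrary $L^{6/5}$ datum and solve an auxiliary elliptic problem), integrate by parts, and distribute: the interior terms produce $\eps^{-1}\int_0^t\!\int \nu_M (P_1^M\pa_t^{\al_0}R)^2/M_-$ and $\eps^{-1}\int (P_1^M R_0)^2/M_-$ (from the $\pa_t$ structure, using $\int_0^t \pa_t = $ endpoint minus initial, then Cauchy-Schwarz with a small constant absorbing the interior terms into the dissipation), the boundary traces produce $\int_{\ga_+}|(I-P_\ga)\pa_t^{\al_0}R|^2/M_-$ and $\int_{\ga_+}|(I-P_\ga)R_0|^2/M_-$, the $L^\infty$-singular $R$ appears when we estimate remainders that cannot be absorbed into dissipation — this gives the $(\eta+\eta'+\eps)\eps^2\sup_s\|R(s)/\sqrt{M_-}\|_\infty^2$ term — and the contribution of the non-slip boundary data together with the CNS pressure/velocity coupling produces $\sup_s\|\na_x[u,\ta](s)\|_{H^1}^2$ via Theorem~\ref{NSsol}. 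The $\ka_0^2\eps\int_0^t\|[a,b,c]\|_2^2$ term is the usual lower-order contribution from the $M$-dependent commutators, absorbed later by smallness of $\ka_0$.

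The main obstacle, I expect, will be the boundary analysis: unlike the periodic case of \cite{LYZ}, here one must carefully split the trace of $R$ on $\ga$ into the diffuse-reflection part $P_\ga R$ (which is annihilated against the collision invariants because $M_- \propto M^w$, as exploited already in \eqref{bdl2}) and the genuinely singular piece $\eps^{-1/2}r = \eps^{-1/2}(P_\ga G - G)$, and show that the latter, though $O(\eps^{-1/2})$ pointwise, contributes only $\int_0^t\!\int_{\ga_-}|\pa_t^{\al_0}r|^2/M_-$ to \eqref{P1-P2} — which is acceptable because $G$ and hence $r$ carry factors of $\na_x[u,\ta]$, $\na_x\ta$ that are small in the CNS norm. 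A second delicate point is that the test functions for the $L^2$ and $L^6$ estimates must themselves satisfy compatible boundary conditions (vanishing normal trace, or belonging to $H^1_0$) so that the integration by parts does not generate uncontrolled boundary terms; choosing these test functions correctly — in particular handling the fact that $e_z$ is not normal and that $[a,b,c]$ do not vanish on $\pa\Om$ — is where the bounded-domain geometry (the conormal setup of Section~\ref{sec2} and Poincar\'e-type inequalities) really enters. Once these are in place, the remaining manipulations (Cauchy-Schwarz with carefully chosen powers of $\eps$, Sobolev embedding $W^{1,6/5}\hookrightarrow L^2$ and $H^1\hookrightarrow L^6$, and absorption using Theorem~\ref{NSsol} and Lemma~\ref{l2es}) are routine.
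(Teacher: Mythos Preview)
Your plan for the $L^6$ estimate \eqref{P1-P6} matches the paper's route: a dual elliptic argument where one tests the weak form of \eqref{Rln} against $\psi_{c,6}=\big(\tfrac{|v-u|^2}{\theta}-5\big)(v-u)\cdot\nabla_x\varphi_{c,6}$ with $-\Delta\varphi_{c,6}=c^5$, $\varphi_{c,6}|_{\partial\Omega}=0$, and uses $W^{2,6/5}$ elliptic estimates and the trace inequality $W^{1,6/5}(\Omega)\hookrightarrow L^{4/3}(\partial\Omega)$. The $L^\infty$ contribution you anticipate indeed arises both from interpolating $\|P_1^MR/\sqrt{M_-}\|_6$ between $L^2$ and $L^\infty$ in the interior and from the $\gamma_+$ boundary term, which after Young's inequality yields the $(\eta+\eta'+\eps)\eps^2\sup_s\|R/\sqrt{M_-}\|_\infty^2$ piece. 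One point you do not mention but which the paper needs: since \eqref{P1-P6} bounds a \emph{supremum} in $s$, the right-hand terms $\eps^{-1}\|P_1^MR\|_2^2$ and $|(I-P_\gamma)R|_{2,+}^2$ must themselves be turned from instantaneous into time-integrated quantities; the paper does this by writing $\sup_s\|P_1^MR(s)\|_2^2\le\|P_1^MR_0\|_2^2+2\int_0^t\|P_1^MR\|_2\|\pa_s P_1^MR\|_2\,ds$ (and similarly on $\gamma_+$), which is why the $\alpha_0=1$ micro-dissipation and boundary terms appear on the right of \eqref{P1-P6}.

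For the $L^2$ estimate \eqref{P1-P2}, however, the paper does \emph{not} derive moment equations and build a Kawashima-type $\Psi$ out of $[a,b,c]$ as you propose. Instead it uses exactly the same Poisson-test-function mechanism as for $L^6$, with $k=2$: set $-\Delta\varphi_{c,2}=c$, $\varphi_{c,2}|_{\partial\Omega}=0$, test against $\psi_{c,2}$, and observe that the transport term $\int(v-u)\cdot\nabla_x\psi_{c,2}\,P_0^MR$ reproduces $10\|c\|_2^2$ directly (the choice $\beta_c=5$ kills the $a$-contribution). The functional $\mathbb{G}(t)$ is then simply $\sum_{\alpha_0\le1}(\pa_t^{\alpha_0}R,\psi_{c,2})$: one integrates $\int_0^t(\pa_tR,\psi_{c,2})\,ds$ by parts in $t$ and controls $(R,\pa_t\psi_{c,2})$ using an $H^{-1}$ bound on $\pa_tc$ obtained from a single moment identity. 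Your phrasing ``$\int(v\cdot\nabla_xR)\Psi$ reproduces $\|\nabla_x$ of a potential$\|$, then Poincar\'e'' is ambiguous and does not quite fit either scheme; in the paper's approach no Poincar\'e step is needed, since $\|c\|_2^2$ falls out of the Poisson structure. The practical advantage of the paper's route is the one you yourself flag as the main obstacle: by placing the Dirichlet condition on the auxiliary potential $\varphi_{c,k}$ rather than on $[a,b,c]$, the only boundary contribution is $\int_\gamma v\cdot n\,R\,\psi_{c,k}\,d\gamma$, in which the $P_\gamma R$ piece vanishes by oddness of $\psi_{c,k}$ in $v\cdot n$, leaving exactly the $(I-P_\gamma)R$ and $r$ terms in the statement. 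A genuine $13$-moments construction with $\Psi$ built directly from $[a,b,c]$ would instead generate spatial boundary terms involving traces of $[a,b,c]$ themselves, which have no a priori control under diffuse reflection.
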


\begin{proof}
We only prove the estimates for $c(t,x)$ in \eqref{P1-P6} and \eqref{P1-P2}, the estimates for $a$ and $b$ being similar. Let $\psi\in C^\infty((0,+\infty)\times\Om\times \R^3)$ be a test function, by Green's formula, one has
\begin{equation}\label{Gid}
(\pa_t R,\psi)+\int_{\ga}v\cdot nR\psi d\ga-\int_{\Om\times \R^3}v\cdot \na_x \psi Rdxdv=-\frac{1}{\eps}(L_MR,\psi)+(g,\psi).
\end{equation}
Recall $R$ can be decomposed as
\begin{equation}\label{Rdec}
\begin{split}
R=\frac{a(t,x)}{\rho}M+\frac{b(t,x)\cdot (v-u)}{\rho\ta}M+\frac{c(t,x)}{\rho\ta}\left(\frac{|v-u|^2}
{\ta}-3\right)M+P_1^M R,
\end{split}
\end{equation}
and at the boundary
\begin{equation}\label{Rbddec}
R|_\ga={\bf 1}_{\ga_+}(I-P_\ga)R+P_\ga R+\eps^{-1/2}{\bf 1}_{\ga_-}r.
\end{equation}

For $k=2,6$, we choose the test functions
\begin{equation}
\psi =\psi_{c,k}\equiv \left(\frac{|v-u|^{2}}{\ta}-\beta _{c}\right)(v-u)\cdot \nabla _{x}\varphi
_{c,k}(t,x),  \ \ \textrm{where} \    -\Delta _{x}\varphi _{c,k}(x)=c^{k-1}(x),\ \  \varphi _{c,k}|_{\partial \Omega }=0, \label{phic}
\end{equation}
and $\beta _{c}$ is a constant to be determined later.

By the standard elliptic
estimate, one has
\begin{equation}\label{c22}
\| \varphi _{c,2}\| _{H^{2}}\lesssim \| c\| _{2}.
\end{equation}
In view of the choice \eqref{phic} with $k=2$, the right hand side of (\ref{Gid})
is dominated  by
\be \textrm{r.h.s.} (\ref{Gid})\lesssim\| c\|_{2}\left\{\eps^{-1}\left\| \frac{P_1^MR}{\sqrt{M_-}}\right\|_{2}
+\left\| \frac{\nu_M^{-1/2}g}{\sqrt{M_-}}\right\|_{2}\right\}.\notag\ee

If instead $k=6$, by the Sobolev-Gagliardo-Nirenberg inequality, we have for any $q\in [\frac 6 5, 2]$,
\be\label{epes} \|\nabla \varphi_{c,6}\|_q\lesssim \|\varphi_{c,6}\|_{W^{2,\frac 6 5}},\ee
from which, we obtain at this stage
\begin{equation*}
\begin{split} \textrm{r.h.s.} (\ref{Gid})\lesssim&\| \na_x\varphi_{c,6}\|_{2}\left\{\eps^{-1}\left\| \frac{P_1^MR}{\sqrt{M_-}}\right\|_{2}
+\left\| \frac{\nu_M^{-1/2}g}{\sqrt{M_-}}\right\|_{2}\right\}\\
\lesssim&\| c^5\|_{6/5}\left\{\eps^{-1}\left\| \frac{P_1^MR}{\sqrt{M_-}}\right\|_{2}
+\left\| \frac{\nu_M^{-1/2}g}{\sqrt{M_-}}\right\|_{2}\right\}\\
\lesssim&\eta\| c\|_{6}^6+C_\eta\left\{\eps^{-6}\left\| \frac{P_1^MR}{\sqrt{M_-}}\right\|^6_{2}
+\left\| \frac{\nu_M^{-1/2}g}{\sqrt{M_-}}\right\|^6_{2}\right\},
\end{split}
\end{equation*}
where we also used Young's inequality and the standard $W^{k,p}$ estimates for the Poisson equation. Next, it follows from Cauchy-Schwarz's inequality
\begin{equation}\label{eps-1eng}
\begin{split}
\sup\limits_{0\leq s\leq t}\eps^{-1}\left\| \frac{P_1^MR}{\sqrt{M_-}}\right\|^2_{2}
\leq& \eps^{-1}\left\| \frac{P_1^MR_0}{\sqrt{M_-}}\right\|^2_{2}+2\eps^{-1}\int_0^tds
\left\| \frac{P_1^MR}{\sqrt{M_-}}\right\|_{2}\left\| \frac{P_1^M\pa_sR-[P_1^M,\pa_s]R}{\sqrt{M_-}}\right\|_{2}\\
\leq& \eps^{-1}\left\| \frac{P_1^MR_0}{\sqrt{M_-}}\right\|^2_{2}+\eps^{-1}\int_0^t
\left\| \frac{P_1^MR}{\sqrt{M_-}}\right\|_{2}^2 ds+2\eps^{-1}\int_0^t
\left\| \frac{P_1^M\pa_sR}{\sqrt{M_-}}\right\|^2_{2}ds\\&+2\eps^{-1}\int_0^t
\left\| \frac{[P_1^M,\pa_s]R}{\sqrt{M_-}}\right\|^2_{2}ds.
\end{split}
\end{equation}
Here, $[P_1^M,\pa_s]$ is the commutator of $P_1^M$ and $\pa_s$, defined by
$$
[P_1^M,\pa_s]=P_1^M\pa_s-\pa_sP_1^M.
$$
We furthermore obtain thanks to the definition $P_1^MR=R-P_0^M R$ and \eqref{P0R}
\begin{equation*}
\begin{split}
\eps^{-1}\int_0^t
\left\| \frac{[P_1^M,\pa_s]R}{\sqrt{M_-}}\right\|^2_{2}ds\leq&
\frac{C}{\eps}\int_0^t
\left\| \frac{\langle R,\pa_s(\chi^M_i/M)\rangle\chi_i^M}{\sqrt{M_-}}\right\|^2_{2}ds
+\frac{C}{\eps}\int_0^t
\left\| \frac{\langle R,\chi^M_i/M\rangle\pa_s\chi_i^M}{\sqrt{M_-}}\right\|^2_{2}ds\\
\leq& C\ka_0\eps\int_0^t\|[a,b,c](s)\|_2^2ds+C\eps^{-1}\int_0^t
\left\| \frac{P_1^MR}{\sqrt{M_-}}\right\|_{2}^2 ds.
\end{split}
\end{equation*}

Now we turn to compute the third term in the left hand side of \eqref{Gid}, for this, let us first write
\begin{equation*}
\begin{split}
\int_{\Om\times \R^3}v\cdot \na_x \psi Rdxdv
=&\int_{\Om\times \R^3}(v-u)\cdot \na_x \psi Rdxdv+\int_{\Om\times \R^3}u\cdot \na_x \psi Rdxdv\\
=&\int_{\Om\times \R^3}(v-u)\cdot \na_x \psi P_0^MRdxdv+\int_{\Om\times \R^3}(v-u)\cdot \na_x \psi P_1^MRdxdv
\\&+\int_{\Om\times \R^3}u\cdot \na_x \psi P_0^MRdxdv+\int_{\Om\times \R^3}u\cdot \na_x \psi P_1^MRdxdv,
\end{split}
\end{equation*}
and perform the elementary calculations
\begin{equation*}
\begin{split}
\pa_{j}\psi_{c,k}=&-2\pa_{j}u_{l}\frac{(v-u)_l}{\ta}(v-u)\cdot\na_x\varphi
_{c,k}(t,x)
-\pa_j\ta\frac{|v-u|^{2}}{\ta^2}(v-u)\cdot \nabla _{x}\varphi
_{c,k}(t,x)\\
&-\left(\frac{|v-u|^{2}}{\ta}-\beta _{c}\right)\pa_ju_i\pa_i\varphi
_{c,k}(t,x)+\left(\frac{|v-u|^{2}}{\ta}-\beta _{c}\right)(v-u)_i \pa_j\pa_i\varphi
_{c,k}(t,x).
\end{split}
\end{equation*}
We then have by using \eqref{P0R} that
\begin{equation}\label{papc1}
\begin{split}
\int_{\Om\times \R^3}&(v-u)\cdot \na_x \psi_{c,k} P_0^MRdxdv\\
=&-2\int_{\Om\times \R^3}(v-u)_j\pa_{j}u_{l}\frac{(v-u)_l}{\ta}(v-u)\cdot\na_x\varphi
_{c,k}(t,x)\frac{b(t,x)\cdot (v-u)}{\rho\ta}Mdxdv\\
&-\int_{\Om\times \R^3}(v-u)_j\pa_j\ta\frac{|v-u|^{2}}{\ta^2}(v-u)\cdot \nabla _{x}\varphi
_{c,k}(t,x)\left\{\frac{a(t,x)}{\rho}+\frac{c(t,x)}{\rho\ta}\left(\frac{|v-u|^2}
{\ta}-3\right)\right\}Mdxdv
\\
&-\int_{\Om\times \R^3}(v-u)_j\left(\frac{|v-u|^{2}}{\ta}-\beta _{c}\right)\pa_ju_i\pa_i\varphi
_{c,k}(t,x)\frac{b(t,x)\cdot (v-u)}{\rho\ta}Mdxdv
\\
&+\int_{\Om\times \R^3}(v-u)_j \left(\frac{|v-u|^{2}}{\ta}-\beta _{c}\right)(v-u)_i \pa_j\pa_i\varphi
_{c,k}(t,x)\frac{a(t,x)}{\rho}Mdxdv
\\
&+\int_{\Om\times \R^3}(v-u)_j \left(\frac{|v-u|^{2}}{\ta}-\beta _{c}\right)(v-u)_i \pa_j\pa_i\varphi
_{c,k}(t,x)\frac{c(t,x)}{\rho\ta}\left(\frac{|v-u|^2}
{\ta}-3\right)Mdxdv.
\end{split}
\end{equation}
By H$\ddot{o}$lder's inequality and the elliptic estimates \eqref{c22} and \eqref{epes}, we see that
the first, second and third line in the right hand side of \eqref{papc1} are bounded by
\begin{equation*}
\begin{split}
C\|\na_x[u,\ta]\|_{\infty}\|\na_x\varphi_{c,k}\|_{k'}\|[a,b,c]\|_{k} \leq& C\|\na_x[u,\ta]\|_{\infty}\left\{\|c\|^k_{k}+\|[a,b,c]\|^k_{k}\right\}\\
\leq& C\ka_0\eps\left\{\|c\|^k_{k}+\|[a,b,c]\|^k_{k}\right\},
\end{split}
\end{equation*}
here $1/k'+1/k=1$.

For the fourth and fifth term,
let us choose $\beta_c=5$ so that
\begin{eqnarray*}
\left\{\begin{array}{rll}
\begin{split}
&\int_{\Om\times \R^3}(v-u)^2_i \left(\frac{|v-u|^{2}}{\ta}-\beta _{c}\right)Mdxdv=0,\\
&\int_{\Om\times \R^3}(v-u)_i^2 \left(\frac{|v-u|^{2}}{\ta}-\beta _{c}\right)\pa^2_i\varphi
_{c,k}(t,x)\frac{c(t,x)}{\rho\ta}\left(\frac{|v-u|^2}
{\ta}-3\right)Mdxdv=-10\|c\|_k^k.
\end{split}\end{array}\right.
\end{eqnarray*}
Next, we get from H$\ddot{o}$lder's inequality, Young's inequality and the elliptic estimates \eqref{c22} and \eqref{epes} that
for $k=2$
\begin{equation*}
\begin{split}
\int_{\Om\times \R^3}(v-u)\cdot \na_x \psi_{c,2} P_1^MRdxdv
\lesssim& \|\na_x \psi_{c,2}\|\left\| \frac{P_1^MR}{\sqrt{M_-}}\right\|_2\\
\lesssim& \eps\|c\|_2^2+\eps\|\na_x[u,\ta]\|^2_\infty\|\na_x\varphi_{c,2}\|_2^2+\frac{1}{\eps}\left\| \frac{P_1^MR}{\sqrt{M_-}}\right\|_2^2
\\
\lesssim& \eps\|c\|_2^2+\frac{1}{\eps}\left\| \frac{P_1^MR}{\sqrt{M_-}}\right\|_2^2,
\end{split}
\end{equation*}
and for $k=6$
\begin{equation*}
\begin{split}
\int_{\Om\times \R^3}&(v-u)\cdot \na_x \psi_{c,6} P_1^MRdxdv\\
\lesssim& \|\na_x \psi_{c,k}\|_{6/5}\left\| \frac{P_1^MR}{\sqrt{M_-}}\right\|_6 +\|\na_x[u,\ta]\|_\infty\|\na_x\varphi_{c,6}\|_{2}\left\| \frac{P_1^MR}{\sqrt{M_-}}\right\|_2\\
\lesssim&  \eta\|c\|_6^6+C_\eta\left\| \frac{P_1^MR}{\sqrt{M_-}}\right\|_6^6+\|\na_x[u,\ta]\|_\infty\|c\|_6^6+\|\na_x[u,\ta]\|_\infty\left\| \frac{P_1^MR}{\sqrt{M_-}}\right\|_2^6
\\
\lesssim&  (\eta+\sqrt{\eps})\|c\|_6^6+C_\eta\eps^{6}\left\| \frac{P_1^MR}{\sqrt{M_-}}\right\|_\infty^6+C_\eta\eps^{-6}\left\| \frac{P_1^MR}{\sqrt{M_-}}\right\|_2^6+\ka_0\sqrt{\eps}\left\| \frac{P_1^MR}{\sqrt{M_-}}\right\|_2^6,
\end{split}
\end{equation*}
the last inequality resulting from
\begin{equation*}
\left\| \frac{P_1^MR}{\sqrt{M_-}}\right\|_6^6\lesssim \eps^{6}\left\| \frac{P_1^MR}{\sqrt{M_-}}\right\|_\infty^6+\eps^{-6}\left\| \frac{P_1^MR}{\sqrt{M_-}}\right\|_2^6.
\end{equation*}
Similarly, one has
\begin{equation*}
\begin{split}
\int_{\Om\times \R^3}u\cdot \na_x \psi_{c,k} P_0^MRdxdv\lesssim& \|u\|_{\infty}
\|\na_x[u,\ta]\|_{\infty}\|\na_x\varphi_{c,k}\|_{k'}\|[a,b,c]\|_k+\|u\|_{\infty}
\|\na^2_x\varphi_{c,k}\|_{k'}\|[a,b,c]\|_k\\
\lesssim& \eps\|[a,b,c]\|^k_k,
\end{split}
\end{equation*}
and
\begin{equation*}
\begin{split}
\int_{\Om\times \R^3}u\cdot \na_x \psi_{c,k} P_1^MRdxdv\lesssim& \|u\|_{\infty}
\|\na_x[u,\ta]\|_{\infty}\|\na_x\varphi_{c,k}\|_{k'}\left\|\frac{P_1^MR}{\sqrt{M_-}}\right\|_k+\|u\|_{\infty}
\|\na^2_x\varphi_{c,k}\|_{k'}\left\|\frac{P_1^MR}{\sqrt{M_-}}\right\|_k\\
\lesssim& \eta\|c\|_k^k+C_\eta\eps^k\left\|\frac{P_1^MR}{\sqrt{M_-}}\right\|^k_k.
\end{split}
\end{equation*}
We now turn to deal with the boundary term $\int_{\ga}v\cdot nR\psi_{c,k} dS_xdv$. By employing \eqref{Rbddec},
we have
\begin{equation*}
\begin{split}
\int_{\ga}v\cdot nR\psi_{c,k} dS_xdv=\int_{\ga_+}v\cdot n (I-P_\ga)R\psi_{c,k} dS_xdv
+\eps^{-1/2}\int_{\ga_-}v\cdot n r\psi_{c,k} dS_xdv,
\end{split}
\end{equation*}
in view of
$$
\int_{\ga}v\cdot nP_\ga R\psi_{c,k} dS_xdv=0.
$$
Furthermore, for $k=2$, it follows from Cauchy-Schwarz's inequality and Sobolev trace inequality \eqref{sobinep2} that
\begin{equation*}
\begin{split}
\int_{\ga_+}v\cdot n (I-P_\ga)R\psi_{c,2} dS_xdv
\leq& \eta\|\na_x\varphi_{c,2}\|^2_{H^1}+C_\eta\int_{\ga_+}v\cdot n \frac{|(I-P_\ga)R|^2}{M_-} dS_xdv\\
\leq& \eta\|c\|_2^2+C_\eta\int_{\ga_+}v\cdot n \frac{|(I-P_\ga)R|^2}{M_-} dS_xdv,
\end{split}
\end{equation*}
and
\begin{equation*}
\begin{split}
\eps^{-1/2}\int_{\ga_-}v\cdot n r\psi_{c,2} dS_xdv
\leq& \eta\|\na_x\varphi_{c,2}\|^2_{H^1}+C_\eta\eps^{-1}\int_{\ga_-}|v\cdot n| \frac{|r|^2}{M_-} dS_xdv\\
\leq& \eta\|c\|_2^2+C_\eta\eps^{-1}\int_{\ga_-}|v\cdot n| \frac{|r|^2}{M_-} dS_xdv,
\end{split}
\end{equation*}
The corresponding estimates for $k=6$ are somehow different.  Thanks to the trace inequality
\be \notag
\left(\int_{\pa \Om} d S_x |f|^{\frac{p(N-1)}{N-p}}\right)^{\frac{N-p}{p(N-1)}}\leq C(N,p)\left( {\int_{\Om} dx |f|^{p}}+\int_{\Om} dx |\nabla f|^{p}\right)^{\frac{1}{p}}
\ee
with $N=3$ and $p=6/5$, we deduce
\begin{equation}\label{cbdtermp3}
\begin{split}
\int_{\ga_+}v\cdot n (I-P_\ga)R\psi_{c,6} dS_xdv
\lesssim& |\na_x\varphi_{c,6}|_{4/3}\left(\int_{\ga_+}v\cdot n \frac{|(I-P_\ga)R|^4}{M^2_-} dS_xdv\right)^{1/4}\\
\lesssim& \|\na_x\varphi_{c,6}\|_{W^{1,6/5}}\left(\int_{\ga_+}v\cdot n \frac{|(I-P_\ga)R|^4}{M^2_-} dS_xdv\right)^{1/4}\\
\lesssim& \|c^5\|_{6/5}\left(\int_{\ga_+}v\cdot n \frac{|(I-P_\ga)R|^4}{M^2_-} dS_xdv\right)^{1/4}\\
\lesssim& \eta\|c\|_{6}^6+C_\eta\left(\int_{\ga_+}v\cdot n \frac{|(I-P_\ga)R|^4}{M^2_-} dS_xdv\right)^{3/2}\\
\lesssim& \eta\|c\|_{6}^6+C_\eta\left\|\frac{(I-P_\ga)R}{\sqrt{M_-}}\right\|^3_{\infty}\left(\int_{\ga_+}v\cdot n \frac{|(I-P_\ga)R|^2}{M_-} dS_xdv\right)^{3/2}\\
\lesssim& \eta\|c\|_{6}^6+\eta'C_\eta\eps^3\left\|\frac{(I-P_\ga)R}{\sqrt{M_-}}\right\|^6_{\infty}+C_{\eta'}C_\eta\eps^{-3}\left(\int_{\ga_+} \frac{|(I-P_\ga)R|^2}{M_-} d\ga\right)^{3},
\end{split}
\end{equation}
and
\begin{equation*}
\begin{split}
\eps^{-1/2}\int_{\ga_-}v\cdot n r\psi_{c,6} dS_xdv\lesssim &\eps^{-1/2}
|\na_x\varphi_{c,6}|_{4/3}\left(\int_{\ga_-}|v\cdot n| \frac{|r|^4}{M^2_-} dS_xdv\right)^{1/4}\\
\lesssim &\eps^{-1/2}
\|\na_x\varphi_{c,6}\|_{W^{1,6/5}}\left(\int_{\ga_-}|v\cdot n| \frac{|r|^4}{M^2_-} dS_xdv\right)^{1/4}\\
\lesssim&\eta\|c\|_{6}^6+C_\eta\eps^{-3}\left(\int_{\ga_-}|v\cdot n| \frac{|r|^4}{M^2_-} dS_xdv\right)^{3/2}
\\
\lesssim&\eta\|c\|_{6}^6+C_\eta\eps^{-3}|\na_x[u,\ta]|_4^6\lesssim \eta\|c\|_{6}^6+C_\eta\eps^{-3}\|\na_x[u,\ta]\|_{H^1}^6.
\end{split}
\end{equation*}
Moreover, as \eqref{eps-1eng}, the boundary term in the right hand side of \eqref{cbdtermp3} can be dominated as follows
\begin{equation*}
\begin{split}
\sup\limits_{0\leq s\leq t}&\int_{\ga_+} \frac{|(I-P_\ga)R|^2}{M_-} d\ga\\
\leq& \left| \frac{(I-P_\ga)R_0}{\sqrt{M_-}}\right|^2_{2,+}+2\int_0^tds
\left| \frac{(I-P_\ga)R}{\sqrt{M_-}}\right|_{2,+}\left| \frac{(I-P_\ga)\pa_sR}{\sqrt{M_-}}\right|_{2,+}\\
\leq& \left| \frac{(I-P_\ga)R_0}{\sqrt{M_-}}\right|^2_{2,+}+\int_0^t
\left| \frac{(I-P_\ga)R}{\sqrt{M_-}}\right|_{2,+}^2 ds+\int_0^t
\left| \frac{(I-P_\ga)\pa_sR}{\sqrt{M_-}}\right|^2_{2,+}ds.
\end{split}
\end{equation*}

To complete the estimates for $c(t,x)$, it remains now to deal with the term $(\pa_tR,\psi_{c,k})$.
For $k=6$, recall $R=P_1^MR+P_0^MR$,
H$\ddot{o}$lder's inequality and Cauchy-Schwarz's inequality yield
\begin{equation*}
\begin{split}
(\pa_tP_1^MR,\psi_{c,6})\lesssim& \left\|M_-^{1/2}\psi_{c,6}\right\|_2\left\|\frac{\pa_tP_1^MR}{\sqrt{M_-}}\right\|_2
\lesssim \left\|\na_x\varphi_{c,6}\right\|_2\left\|\frac{\pa_tP_1^MR}{\sqrt{M_-}}\right\|_2\\
\lesssim& \left\|c^5\right\|_{6/5}\left\|\frac{\pa_tP_1^MR}{\sqrt{M_-}}\right\|_2\lesssim \eta\|c\|_6^6+C_\eta\left\|\frac{\pa_tP_1^MR}{\sqrt{M_-}}\right\|_2^6.
\end{split}
\end{equation*}
Note that
\begin{equation}\label{pamim}
\begin{split}
\pa P_0^M R=&\frac{\pa a}{\rho }M+\frac{\pa b\cdot (v-u)}{\rho \ta}M+\left(\frac{|v-u|^2}{2\ta}-\frac{3}{2}\right)\frac{\pa c}{\rho\ta}M\\
&+\frac{a}{\rho}M\frac{v-u}{\ta}\cdot \pa u+\frac{a}{\rho}M\left(\frac{|v-u|^2}{2\ta}-\frac{3}{2}\right)\frac{\pa \ta}{\ta}-\frac{b\cdot\pa u}{\rho\ta}M\\
&+\frac{b\cdot (v-u)(v-u)\cdot \pa u}{\rho\ta}M+\frac{b\cdot(v-u)}{\rho\ta}
\left(\frac{|v-u|^2}{2\ta}-\frac{5}{2}\right)\frac{\pa \ta}{\ta}M\\
&+\left(\frac{|v-u|^2}{2\ta}-\frac{3}{2}\right)\frac{c}{\rho\ta}\frac{v-u}{\ta}\cdot \pa u M
+\left(\frac{|v-u|^2}{2\ta}-\frac{3}{2}\right)\left(\frac{|v-u|^2}{2\ta}-\frac{5}{2}\right)\frac{c}{\rho\ta}\frac{\pa \ta}{\ta}M
\\&-\frac{|v-u|^2}{2\ta}\frac{c}{\rho\ta}\frac{\pa \ta}{\ta}M,
\end{split}
\end{equation}
here $\pa=\pa_t\ \text{or}\ \pa_{x_j} (j=1,2,3).$ With this, we see that
\begin{equation*}
\begin{split}
(\pa_tP_0^MR,\psi_{c,6})=&\left(\frac{b\cdot(v-u)}{\rho\ta}
\left(\frac{|v-u|^2}{2\ta}-\frac{5}{2}\right)\frac{\pa_t \ta}{\ta}M+\left(\frac{|v-u|^2}{2\ta}-\frac{3}{2}\right)\frac{c}{\rho\ta}\frac{v-u}{\ta}\cdot \pa_t u M,\psi_{c,6}\right)\\
\lesssim& \left\|M_-^{1/2}\psi_{c,6}\right\|_2\|[b,c]\|_6\|\pa_t\ta\|_3
\lesssim \left\|\na_x\varphi_{c,6}\right\|_2\|[b,c]\|_6\|\pa_t[u,\ta]\|_3\\
\lesssim& \left\|c^5\right\|_{6/5}\|[b,c]\|_6\|\pa_t[u,\ta]\|_3\lesssim \ka_0\eps\|[b,c]\|_6^6.
\end{split}
\end{equation*}
The corresponding estimates for $k=2$ will be more complicate, since we have to deal with the integral
$\int_0^t(\pa_tR,\psi_{c,2})ds.$ For this, we first choose a test function $\varphi(x)\left(\frac{|v-u|^2}{2\ta}-\frac{3}{2}\right)$, then in the sense of distribution, we get from \eqref{Rln} and \eqref{pamim}
that
\begin{equation*}
\begin{split}
\frac{3}{2}(\pa_t c, \varphi)&+(a\pa_t\ta+b\cdot\pa_tu,\varphi)-(b,\na_x\varphi)+\frac{5}{2}\left(b\cdot\na_x\ta,\varphi\right)
+((a+c)\na_x\cdot u,\varphi)\\
&+\frac{3}{2}(u\cdot\na_x\varphi,c)-\frac{3}{2}\left(\frac{a+c}{\ta}u\cdot\na_x\ta,\varphi\right)+(u\cdot\na_xu \cdot b,\varphi)\\
&+\left(v\cdot \na_x\varphi\left(\frac{|v-u|^2}{2\ta}-\frac{3}{2}\right),P_1^MR\right)-\left(v\cdot \na_x\ta\varphi\frac{|v-u|^2}{2\ta^2},P_1^MR\right)\\&+\left(v\cdot \na_xu \cdot\frac{(v-u)}{\ta}\varphi,P_1^MR\right)=0,
\end{split}
\end{equation*}
from which, one directly has
\begin{equation*}
\begin{split}
|(\pa_t c, \varphi)|\lesssim \left\{\|b\|_2+\eps\|[a,c]\|_2+\left\|\frac{P_1^MR}{\sqrt{M_-}}\right\|_2\right\}\|\varphi\|_{H^1_0},
\end{split}
\end{equation*}
hence
\begin{equation}\label{pach-3}
\begin{split}
\|\pa_tc\|^2_{H^{-1}}\lesssim \|b\|_2^2+\eps\|[a,c]\|_2^2+\left\|\frac{P_1^MR}{\sqrt{M_-}}\right\|_2^2.
\end{split}
\end{equation}
By integration by parts, we obtain
\begin{equation*}
\begin{split}
\int_0^t(\pa_tR,\psi_{c,2})ds=(R,\psi_{c,2})(t)-(R,\psi_{c,2})(0)-\int_0^t(R,\pa_t\psi_{c,2})ds.
\end{split}
\end{equation*}
We next employ this computation in \eqref{pach-3}, to deduce
\begin{equation*}
\begin{split}
|(R,\pa_t\psi_{c,2})|\leq&\left|\left(R,\left(\frac{|v-u|^{2}}{\ta}-5\right)(v-u)\cdot \nabla _{x}\pa_t\varphi
_{c,2}\right)\right|
+\left|\left(R,\left(\frac{|v-u|^{2}}{\ta}-5\right) \pa_tu\cdot \nabla _{x}\varphi
_{c,2}\right)\right|
\\&+\left|\left(R,\frac{2(v-u)\cdot \pa_t u}{\ta}(v-u)\cdot \nabla _{x}\varphi_{c,2}\right)\right|+\left|\left(R,\frac{|v-u|^2 \pa_t \ta}{\ta^2}(v-u)\cdot \nabla _{x}\varphi
_{c,2}\right)\right|\\
\lesssim &C_\eta\left\|\frac{P_1^MR}{\sqrt{M_-}}\right\|_2^2+\eta\|\nabla _{x}\Delta^{-1}\pa_t c\|_2^2+\|\pa_t[u,\ta]\|_{\infty}\left\{\|\varphi_{c,2}\|_2^2+\|[a,b,c]\|_2^2\right\}\\
\lesssim &C_\eta\left\|\frac{P_1^MR}{\sqrt{M_-}}\right\|_2^2+\eta\|b\|_2^2+\eps\|[a,b,c]\|_2^2,
\end{split}
\end{equation*}
here the decomposition $R=P_1^MR+P_0^MR$ was used.

Combing all the estimates above together, we arrive at
\begin{equation}\label{cl2}
\begin{split}
\eps\int_0^t\|c\|_2^{2}ds \lesssim&
\eps|(R,\psi_{c,2})(t)-(R,\psi_{c,2})(0)|+
\eps^{-1}\int_0^t\int_{\Om\times \R^3}\frac{\nu_M(P_1^MR)^2}{M_-}dxdvds\\&+\eps\int_0^t\int_{\ga_+}\frac{|(I-P_\ga)R|^2}{M_-}d\ga ds
+\int_0^t\int_{\ga_-} \frac{|r|^2}{M_-} d\ga ds\\&+\eps\int_0^t\int_{\Om\times \R^3}\frac{\nu_M^{-1}g^2}{M_-}dxdvds
+(\eta+\eps)\eps\int_0^t\|[a,b]\|_2^{2}ds,
\end{split}
\end{equation}
and
\begin{equation*}
\begin{split}
\eps\sup\limits_{0\leq s\leq t}\| c(s)\|^2_{6}
 \lesssim& \int_{\ga_+}\frac{|(I-P_\ga)R_0|^2}{M_-}d\ga+\eps^{-1}\int_{\Om\times \R^3}\frac{\nu_M(P_1^MR_0)^2}{M_-}dxdv
 \\&+ \eps^{-1}\sum\limits_{\al_0\leq1}\int_0^t\int_{\Om\times \R^3}\frac{\nu_M(P_1^M\pa^{\al_0}_tR)^2}{M_-}dxdvds\\&+C_{\eta,\eta'}
 \sum\limits_{\al_0\leq1}\int_0^t\int_{\ga_+}\frac{|(I-P_\ga)\pa^{\al_0}_tR|^2}{M_-}d\ga ds+(\eta+\eta'+\eps)\eps^2\sup\limits_{0\leq s\leq t}\left\|\frac{R(s)}{\sqrt{M_-}}\right\|_\infty^2\\
&+\eps\sup\limits_{0\leq s\leq t}\int_{\Om\times \R^3}\frac{\nu_M^{-1}g^2}{M_-}dxdv
+\eps^{4/3}\sup\limits_{0\leq s\leq t}\| [a,b](s)\|^2_{6}
+\sup\limits_{0\leq s\leq t}\|\na_x[u,\ta]\|_{H^1}^2\\&+\ka_0^2\eps\int_0^t\|[a,b,c](s)\|_2^2ds.
\end{split}
\end{equation*}
To complete the estimates for $c(t,x)$, it remains now to estimate $\int_0^t\|\pa_tc\|^{2}ds$. To do this, we begin with
the following equations
\begin{eqnarray*}
\left\{\begin{array}{rll}
\begin{split}
&(\pa^2_t R,\psi)+\int_{\ga}v\cdot n\pa_tR\psi dS_xdv-\int_{\Om\times \R^3}v\cdot \na_x \psi \pa_tRdxdv=-\frac{1}{\eps}(\pa_tL_MR,\psi)+(\pa_tg,\psi),\\
&\pa_tR|_\ga={\bf 1}_{\ga_+}(I-P_\ga)\pa_tR+P_\ga \pa_tR+\eps^{-1/2}{\bf 1}_{\ga_-}\pa_tr,
\end{split}
\end{array}\right.
\end{eqnarray*}
here as in \eqref{Gid} $\psi$ is a test function.
Once again $\pa_tR$ can be decomposed as
\begin{equation*}
\begin{split}
\pa_tR=&P_0^M\pa_tR+P_1^M\pa_tR\\
=&\frac{\pa_ta(t,x)}{\rho}M+\frac{\pa_tb(t,x)\cdot (v-u)}{\rho\ta}M
+\frac{\pa_tc(t,x)}{\rho\ta}\left(\frac{|v-u|^2}{\ta}-3\right)M\\
&+a(t,x)\sum\limits_i\left\langle\pa_t\left(\frac{M}{\rho}\right),\chi_i/M\right\rangle\chi_i
+\sum\limits_ib(t,x)\cdot\left\langle\pa_t\left(\frac{ (v-u)}{\rho\ta }M\right),\chi_i/M\right\rangle\chi_i
\\&+c(t,x)\sum\limits_i\left\langle\pa_t\left(\left(\frac{|v-u|^2}{\ta}-3\right)\frac{M}{\rho\ta}\right),\chi_i/M\right\rangle\chi_i
+P_1^M \pa_tR.
\end{split}
\end{equation*}
Next, we set
\begin{equation*}
\psi =\psi_{c,t}\equiv \left(\frac{|v-u|^{2}}{\ta}-5\right)(v-u)\cdot \nabla _{x}\varphi
_{c,t}(t,x),  \ \ \textrm{where} \    -\Delta _{x}\varphi _{c,t}(x)=\pa_tc(x),\ \  \varphi _{c,t}|_{\partial \Omega }=0. 
\end{equation*}
Then performing the similar calculations as for obtaining \eqref{cl2}, one has
\begin{equation*}
\begin{split}
\eps\int_0^t\|\pa_tc\|_2^{2}ds \lesssim&
\eps|(\pa_tR,\psi_{c,t})(t)-(\pa_tR,\psi_{c,t})(0)|+
\eps^{-1}\sum\limits_{\al_0\leq1}\int_0^t\int_{\Om\times \R^3}\frac{\nu_M(P_1^M\pa_t^{\al_0}R)^2}{M_-}dxdvds
\\&+\eps\sum\limits_{\al_0\leq1}\int_0^t\int_{\ga_+}\frac{|(I-P_\ga)\pa_t^{\al_0}R|^2}{M_-}d\ga ds
+\sum\limits_{\al_0\leq1}\int_0^t\int_{\ga_-} \frac{|\pa_t^{\al_0}r|^2}{M_-} d\ga ds\\&+\eps\sum\limits_{\al_0\leq1}\int_0^t\int_{\Om\times \R^3}\frac{\nu_M^{-1}\pa_t^{\al_0}g^2}{M_-}dxdvds
+(\eta+\eps)\eps\int_0^t\|\pa_t[a,b]\|_2^{2}ds\\&+\eps^2\int_0^t\|[a,b,c]\|_2^{2}ds.
\end{split}
\end{equation*}
Thus, the proof for Lemma \ref{l2-l6} is finished.
\end{proof}

\section{$L^\infty$ estimates}

In this section, we deduce the $L^\infty$ estimates for the solutions of the remainder equation \eqref{R}.
We begin our analysis of the linear equation
\begin{eqnarray}\label{Rln2}
\left\{\begin{split}
&\pa_tR+v\cdot\na_xR+\frac{1}{\eps}\nu_MR=\frac{1}{\eps}K_MR+g,\\
&R(0,x,v)= R_0(x,v),\\
&R_{-}=P_\ga R+\eps^{-1/2}r.
\end{split}\right.
\end{eqnarray}
The characteristic method is employed to obtain the $L^\infty$ estimates of solutions of the above equations. The key point in this line is that
a ``smallness" can be gained once the the particle satifying the diffusive boundary condition $\eqref{Rln2}_3$ hits the boundary frequently.
Given $(t,x,v)$, we let $[X(s),V(s)]$ satisfy
\begin{equation*}
\frac{dX(s)}{ds}=V(s),\ \ \frac{dV(s)}{ds}=0,
\end{equation*}
with the initial data $[X(t;t,x,v),V(t;t,x,v)]=[x,v]$. Then $[X(s;t,x,v),V(s;t,x,v)]$$=[x-(t-s)v,v]$$=[X(s),V(s)]$, which is called as the backward characteristic trajectory for the Boltzmann equation
\eqref{BE}. For $%
(x,v)\in \Om\times \R^{3}$, the \textit{backward exit time} $t_{\mathbf{b}}(x,v)>0$ is defined to be the first moment at which the backward characteristic line
$[X(s;0,x,v),V(s;0,x,v)]$ emerges from $\Om$:
\begin{equation*}
t_{\mathbf{b}}(x,v)=\inf \{\ t> 0:x-tv\notin \Omega \},
\end{equation*}
and we also define $x_{\mathbf{b}}(x,v)=x-t_{\mathbf{b}}(x,v)v\in \partial \Omega $. Note that for any $(x,v)$, we use $t_{\mathbf{b}}(x,v)$
whenever it is well-defined.

\begin{definition}[Stochastic Cycles]
Fixed any point $(t,x,v)$ with $(x,v)\notin \gamma _{0}$%
, let $(t_{0},x_{0},v_{0})$ $=(t,x,v)$. For $v_{k+1}$ such
that $v_{k+1}\cdot n(x_{k+1})>0$, define the $(k+1)$-component of the
back-time cycle as
\begin{equation*}
(t_{k+1},x_{k+1},v_{k+1})=(t_{k}-t_{\mathbf{b}}(x_{k},v_{k}),x_{%
\mathbf{b}}(x_{k},v_{k}),v_{k+1}). 
\end{equation*}%
Set
\begin{eqnarray*}
X_{\mathbf{cl}}(s;t,x,v) &=&\sum_{k}\mathbf{1}_{[t_{k+1},t_{k})}(s)%
\{x_{k}+(s-t_{k})v_{k}\}, \\
V_{\mathbf{cl}}(s;t,x,v) &=&\sum_{k}\mathbf{1}%
_{[t_{k+1},t_{k})}(s)v_{k}.
\end{eqnarray*}%
Define $\mathcal{V}_{k+1}=\{v\in \R^{3}\ |\ v\cdot
n(x_{k+1})>0\}$, and let the iterated integral for $k\geq 2$ be defined as
\begin{equation*}
\int_{\Pi_{j=1}^{k-1}\mathcal{V}_{j}}\cdots\Pi_{j=1}^{k-1}d\sigma_{j}\equiv \int_{\mathcal{V}_1}\cdots\Big\{\int_{\mathcal{V}_{k-1}}d\sigma_{k-1}\Big\} d\sigma_1,  
\end{equation*}
where $d\sigma _{j}=M^w (v)(n(x_{j})\cdot v)dv$ is a probability
measure.
\end{definition}
The following lemma is borrowed from \cite[Lemma 23, pp.781]{Guo-2010} and \cite[Lemma 3.3, pp.489]{LY-2016}
\begin{lemma}\label{k}
Let $T_{0}>0$ and large enough, there exist constants $%
C_{1},C_{2}>0$ independent of $T_0$, such that for $k=C_1T_0^{5/4}$,
and $(t,x,v)\in \lbrack 0,\infty)\times \overline{\Omega }\times \R^{3},$
\begin{equation*}
\int_{\Pi _{j=1}^{k-1}\mathcal{V}_{j}}\mathbf{1}_{\{t_{k}(t,x,v,v
_{1},v_{2},\cdots ,v_{k-1})>0\}}\Pi _{j=1}^{k-1}d\sigma _{j}\leq
\left\{ \frac{1}{2}\right\} ^{C_{2}T_0^{5/4}}.  
\end{equation*}
Moreover, there exist constants $C_3,\ C_4>0$ independent of $k$ such that
\begin{equation*}
\begin{split}
\int_{\Pi _{j=1}^{k-1}\mathcal{V}_{j}}\sum_{l=1}^{k-1}\mathbf{1}_{\{t_{l+1}\leq 0<t_{l}\}}
\int_0^{t_l} d\Sigma^w_l(s)ds\leq C_3,
\end{split}
\end{equation*}
and
\begin{equation*}
\begin{split}
\int_{\Pi _{j=1}^{k-1}\mathcal{V}_{j}}\sum_{l=1}^{k-1}\mathbf{1}_{\{t_{l+1}>0\}}\int_{t_{l+1}}^{t_l} d\Sigma^w_l(s)ds\leq
C_4,
\end{split}
\end{equation*}
where
\begin{eqnarray*}
d\Sigma^w_{l}(s) &=&\{\Pi _{j=l+1}^{k-1}d\sigma_{j}\}\times\{e^{\nu_M
(v_{l})(s-t_{l})}\tilde{w}_{\ell}(v_{l})d\sigma_{l}\}\times \Pi _{j=1}^{l-1}\{{{%
e^{\nu_M (v_{j})(t_{j+1}-t_{j})} d\sigma_{j}}}\},
\end{eqnarray*}
and
$
\widetilde{w}_{\ell}=\frac{1}{w_{\ell}\sqrt{M_-}}.
$
\end{lemma}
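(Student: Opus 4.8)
The plan is to reduce the statement to the stochastic-cycle estimates of Guo \cite{Guo-2010}, in the form refined in \cite{LY-2016}; below I describe the structure of the argument and single out where the difficulty lies.

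\emph{The exponential non-exit bound.} The idea for the first inequality is that a backward trajectory that is still inside $\Om$ after $k-1$ diffuse reflections must, with overwhelming probability, have performed a definite fraction of ``non-grazing'' reflections, each of which advances the backward time by a controlled amount. Fix a small $\de>0$ and, for each slot $j$, split $\mathcal{V}_j$ into the good part $\mathcal{V}_j^{\de}=\{v:v\cdot n(x_j)>\de,\ |v|\le 1/\de\}$ and its complement; since $d\sigma_j=M^w(v)(n(x_j)\cdot v)\,dv$ is a probability measure and $M^w$ is Gaussian, the $d\sigma_j$-measure of the complement is $O(\de)$, hence as small as we please. First I would show, using the boundedness of $\Om=\{\xi<0\}$ and the smoothness of $\partial\Om$, that the backward exit times along a run of non-grazing reflections add up to at least a fixed multiple of the length of that run. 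On the event $\{t_k>0\}$ this forces a dichotomy: either at least $k/2$ of the reflections are bad --- an event whose $\Pi_{j}d\sigma_j$-measure is at most $\binom{k-1}{\lfloor k/2\rfloor}(C\de)^{k/2}\le(1/2)^{ck}$ once $\de$ is small enough --- or at least $k/2$ are good, in which case $t_0-t_k\gtrsim k$ and therefore $t_k\le 0$ as soon as $k$ exceeds a fixed multiple of $T_0$ (the length of the time window on which the iteration is carried out). Taking $k=C_1T_0^{5/4}$, where the exponent $5/4$ is dictated by the quantitative covering/counting argument needed to make ``non-grazing'' effective in a general, possibly non-convex, domain, yields $(1/2)^{C_2T_0^{5/4}}$. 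I expect this geometric step to be the main obstacle: in a non-convex domain two consecutive chords can be arbitrarily short, so the lower bound on the accumulated $t_{\mathbf{b}}$ along a good run is not available pathwise and has to be recovered probabilistically by unfolding the iterated integral and integrating out one velocity at a time, which is exactly the technical core of \cite{Guo-2010,LY-2016}.

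\emph{The first weighted estimate.} For the second inequality I would use that for each fixed cycle $(v_1,\dots,v_{k-1})$ there is at most one index $l$ with $t_{l+1}\le 0<t_l$, so the sum over $l$ reduces to a single term. On $s\in[0,t_l]$ one has $s-t_l\le 0$, hence $e^{\nu_M(v_l)(s-t_l)}\le1$, and, invoking the uniform lower bound $\nu_M(v_l)\ge\nu_0>0$ that follows from $\nu_M\sim\langle v\rangle$ together with Theorem \ref{NSsol},
$$
\int_0^{t_l}e^{\nu_M(v_l)(s-t_l)}\,ds\le\frac{1}{\nu_M(v_l)}\le\frac{1}{\nu_0}.
$$
The remaining factors in $d\Sigma^w_l$, namely $\Pi_{j=l+1}^{k-1}d\sigma_j$ and $\Pi_{j=1}^{l-1}e^{\nu_M(v_j)(t_{j+1}-t_j)}d\sigma_j$, are dominated by products of probability measures, each exponential being $\le1$ since $t_{j+1}<t_j$. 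Finally the weight is harmless, because, by the Gaussian decay $M^w/\sqrt{M_-}\sim e^{-|v|^2/4}$,
$$
\int_{v\cdot n(x_l)>0}\widetilde{w}_{\ell}(v)\,d\sigma_l=\int_{v\cdot n(x_l)>0}\frac{(n(x_l)\cdot v)\,M^w(v)}{\langle v\rangle^{\ell}\sqrt{M_-(v)}}\,dv<\infty
$$
uniformly in $x_l\in\partial\Om$; integrating out the velocities one slot at a time then yields $C_3$.

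\emph{The second weighted estimate.} For the third inequality the indicator $\mathbf{1}_{\{t_{l+1}>0\}}$ lets many indices $l$ contribute, and the decay in $l$ comes from the accumulated exponential factor in $d\Sigma^w_l$: since $\nu_M(v_j)\ge\nu_0$ and $t_{j+1}<t_j$ one has $\Pi_{j=1}^{l-1}e^{\nu_M(v_j)(t_{j+1}-t_j)}\le e^{\nu_0(t_l-t_1)}$, while $\nu_M(v_l)\ge\nu_0$ and $s\le t_l$ give $e^{\nu_0(t_l-t_1)}e^{\nu_M(v_l)(s-t_l)}\le e^{\nu_0(s-t_1)}$. Since the times $t_l$ are strictly decreasing in $l$, for frozen velocities the $s$-integrals telescope into an honest integral,
$$
\sum_{l=1}^{k-1}\mathbf{1}_{\{t_{l+1}>0\}}\,e^{\nu_0(t_l-t_1)}\int_{t_{l+1}}^{t_l}e^{\nu_M(v_l)(s-t_l)}\,ds\le\sum_{l=1}^{k-1}\int_{t_{l+1}}^{t_l}e^{\nu_0(s-t_1)}\,ds\le\int_{-\infty}^{t_1}e^{\nu_0(s-t_1)}\,ds=\frac{1}{\nu_0}.
$$
As before, the $\widetilde{w}_{\ell}$-weight sits on a single slot and is absorbed by the finiteness of $\int\widetilde{w}_{\ell}\,d\sigma_l$, the other factors being probability measures, so integrating out the velocities yields $C_4$. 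The delicate point --- and the reason the lemma is quoted rather than reproved here --- is that freezing the velocities before summing in $l$ is not literally legitimate, since $t_l$ depends on $v_1,\dots,v_{l-1}$ and the weighted slot moves with $l$; the correct interchange of the sum with the velocity integrations is carried out in \cite{Guo-2010,LY-2016}.
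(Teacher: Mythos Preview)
The paper does not give its own proof of this lemma; it is stated verbatim as ``borrowed from \cite[Lemma 23]{Guo-2010} and \cite[Lemma 3.3]{LY-2016}''. So your outline already goes further than the paper, and your sketch of the first inequality is exactly the Guo mechanism those references supply.

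For the two weighted estimates, however, there is a genuine gap in your argument rather than a mere technicality. Your telescoping display for the third bound,
\[
\sum_{l=1}^{k-1}\mathbf{1}_{\{t_{l+1}>0\}}\int_{t_{l+1}}^{t_l}e^{\nu_0(s-t_1)}\,ds\le\int_{-\infty}^{t_1}e^{\nu_0(s-t_1)}\,ds=\frac{1}{\nu_0},
\]
is written without the factor $\tilde{w}_\ell(v_l)$; once that factor is reinstated the terms no longer concatenate, because the weight sits on a different velocity slot for each $l$ and $\tilde{w}_\ell(v)\sim\langle v\rangle^{-\ell}e^{|v|^2/4}$ is unbounded. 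Conversely, if you first integrate in $v_l$ to absorb the weight via $\int_{\mathcal{V}_l}\tilde{w}_\ell\,d\sigma_l<\infty$, you destroy the pointwise structure that makes the telescoping (or, for the second estimate, the disjointness) work, and a naive bound then gives $O(k)$ rather than $O(1)$. The same issue arises in your second estimate: disjointness of $\{t_{l+1}\le 0<t_l\}$ in $l$ gives $\sum_l\mathbf{1}_{\{\cdot\}}\le 1$ only without the weight, and with it you are left with $\sum_l\int\mathbf{1}_{\{t_{l+1}\le0<t_l\}}\tilde{w}_\ell(v_l)\,\Pi d\sigma_j$, which is not controlled by either ingredient alone.

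You are right that the resolution lives in \cite{Guo-2010,LY-2016}: one unfolds the iterated integral slot by slot, carrying the exponential factors through the $v_j$-integrations so that the decay and the weight are handled simultaneously rather than sequentially. Since the paper itself simply cites those lemmas, deferring to them is consistent with the paper; but your write-up should not present the pointwise telescoping as if it already settles the matter and then append the caveat, since as written the two steps are in tension.
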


\begin{proposition}\label{point_dyn} Let $R$ satisfy \eqref{Rln2}.
Then, for $w_\ell(v) = (1+|v|^2)^{\ell/2}$ with $\ell\geq 0$, it holds that
\begin{equation}\label{point1}
\begin{split}
 \eps\left\|w_\ell \frac{R(t)}{\sqrt{M_-}}\right\|_{\infty}
  \lesssim&
   \eps\left\|  w_\ell \frac{R_{0}}{\sqrt{M_-}} \right\|_{\infty} + \eps^{\frac 1 2} \sup_{0 \leq s \leq t}\left\|  w_\ell \frac{r(s)}{\sqrt{M_-}} \right\|_{\infty} + \eps^{2} \sup_{0 \leq s \leq t} \left\|  w_{\ell-1} \frac{g(s)}{\sqrt{M_-}}\right\|_{\infty}\\
  &+  \sup_{0\leq s \leq  t}\eps^{1/2}\left\|\frac{P_0^M R(s)}{\sqrt{M_-}}\right\|_{6} +
  \eps^{-1/2}  \sup_{0 \leq s \leq  t}\left\|\frac{P_1^M R(s)}{\sqrt{M_-}}\right\|_2,
    \end{split}
\end{equation}
\begin{equation}\label{point2}
\begin{split}
\eps\left\|w_\ell \frac{R(t)}{\sqrt{M_-}}\right\|_{\infty}
\lesssim&
\eps\left\|  w_\ell \frac{R_{0}}{\sqrt{M_-}} \right\|_{\infty} + \eps^{\frac 1 2} \sup_{0 \leq s \leq t}\left\|  w_\ell \frac{r(s)}{\sqrt{M_-}} \right\|_{\infty} + \eps^2 \sup_{0 \leq s \leq t} \left\|  w_{\ell-1} \frac{g(s)}{\sqrt{M_-}}\right\|_{\infty}\\
&+\eps^{-1/2}  \sup_{0 \leq s \leq  t}\left\|\frac{ R(s)}{\sqrt{M_-}}\right\|_2,
\end{split}
\end{equation}
and
\begin{equation}\label{point3}
\begin{split}
\eps\left\|w_\ell \frac{\pa_tR(t)}{\sqrt{M_-}}\right\|_{\infty}
\lesssim&
\eps\left\|  w_\ell \frac{\pa_tR_{0}}{\sqrt{M_-}} \right\|_{\infty} + \eps^{\frac 1 2} \sup_{0 \leq s \leq t}\left\|  w_\ell \frac{\pa_tr(s)}{\sqrt{M_-}} \right\|_{\infty} + \eps^2 \sup_{0 \leq s \leq t} \left\|  w_{\ell-1} \frac{\pa_tg(s)}{\sqrt{M_-}}\right\|_{\infty}\\
&+\eps^{-1/2}  \sup_{0 \leq s \leq  t}\left\|\frac{ \pa_tR(s)}{\sqrt{M_-}}\right\|_2
+\ka_0\eps\sup_{0 \leq s \leq t}\left\|  w_\ell \frac{R(s)}{\sqrt{M_-}} \right\|_{\infty}.
\end{split}
\end{equation}

\end{proposition}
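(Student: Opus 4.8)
\emph{Setup and the stochastic-cycle representation.} The plan is to run the $L^2$--$L^\infty$ interplay of Guo and of Esposito--Guo--Kim--Marra, adapted to the $\eps$-scaled equation \eqref{Rln2} and to the moving Maxwellian $M=M_{[\rho,u,\ta]}$. I first pass to the weighted unknown $h\eqdef w_\ell R/\sqrt{M_-}$; since $M_-$ is a fixed constant Maxwellian, $h$ solves
\[
\pa_t h+v\cdot\na_x h+\frac{\nu_M}{\eps}h=\frac{1}{\eps}\int k_{w,M}(v,v_*)\,h(\cdot,v_*)\,dv_*+\frac{w_\ell}{\sqrt{M_-}}g,
\]
with a weighted diffuse-reflection condition coming from $R_-=P_\ga R+\eps^{-1/2}r$, where $k_{w,M}(v,v_*)=\frac{w_\ell(v)}{w_\ell(v_*)}\,\frac{\sqrt{M(v)/M_-(v)}}{\sqrt{M(v_*)/M_-(v_*)}}\,k_M(v,v_*)$. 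By Theorem \ref{NSsol} the profile $[\rho,u,\ta]$ stays so close to $[1,0,1]$ that $\nu_M(v)\gtrsim\langle v\rangle$ uniformly and the Grad-type bounds hold uniformly in $(t,x)$, in particular $\int k_{w,M}(v,v_*)\,dv_*\lesssim\langle v\rangle^{-1}$, $\int k_{w,M}(v,v_*)^2\,dv_*\lesssim\langle v\rangle^{-2}$ and $\int_{\R^3}k_{w,M}(v,v_*)^{6/5}\,dv_*\lesssim 1$. Writing the mild representation of $h$ along the straight backward characteristic and inserting the weighted reflection at each boundary hit produces the back-time stochastic cycles of the Definition; Lemma \ref{k} bounds the contribution of trajectories bouncing more than $k\sim C_1T_0^{5/4}$ times by $(\tfrac12)^{C_2T_0^{5/4}}$, absorbed once $T_0$ is fixed large, and supplies the $d\Sigma^w_l$ bounds that reduce all remaining interior and boundary contributions to the interior case. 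Since the damping factor along a characteristic is $\exp(-\tfrac1\eps\int\nu_M)\le e^{-\nu_0(t-s)/\eps}$ and $\int_0^t e^{-\nu_0(t-s)/\eps}\,ds\lesssim\eps$, the explicit data, source and boundary terms immediately give $\eps\|w_\ell R_0/\sqrt{M_-}\|_\infty$, $\eps^{1/2}\sup\|w_\ell r/\sqrt{M_-}\|_\infty$ and $\eps^2\sup\|w_{\ell-1}g/\sqrt{M_-}\|_\infty$ (the gain $w_\ell\to w_{\ell-1}$ on $g$ from $\nu_M^{-1}$). What is left is the collision term $\tfrac1\eps\int k_{w,M}h\,dv_*$.

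\emph{The collision term via double Duhamel and a change of variables.} Substituting the mild formula for $h$ once more inside the collision term, the ``lower-order'' pieces (one application of $k_{w,M}$ to data/source/boundary) are handled as above using $\int k_{w,M}\,dv_*\lesssim 1$, so the genuinely new object is the double integral
\[
\frac{1}{\eps^2}\int\!\!\int e^{-\frac1\eps\int_s^t\nu_M}\,e^{-\frac1\eps\int_{s'}^s\nu_M}\,k_{w,M}(v,v')\,k_{w,M}(v',v'')\,h\big(s',x-(t-s)v-(s-s')v',v''\big)\,dv''\,dv'\,ds'\,ds .
\]
Here I split $R=P_0^MR+P_1^MR$ as in \eqref{P0R}. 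For the macroscopic part, $\frac{w_\ell}{\sqrt{M_-}}P_0^MR$ is a polynomial in $v''$ times $M/\sqrt{M_-}$ times $[a,b,c](s',\cdot)$, so $\int k_{w,M}(v',v'')\big|\tfrac{w_\ell}{\sqrt{M_-}}P_0^MR\big|\,dv''\lesssim|[a,b,c](s',x'')|$ uniformly in $v'$; changing variables $v'\mapsto y=x-(t-s)v-(s-s')v'$ (Jacobian $(s-s')^{-3}$) and applying H\"older in $y$, with $L^6_y$ on $[a,b,c]$ and $L^{6/5}$ on $k_{w,M}(v,\cdot)$ (whose $L^{6/5}$ norm over $\Om$ is $((s-s')^3\int k_{w,M}^{6/5}dv')^{5/6}\lesssim(s-s')^{5/2}$), one gets the net power $(s-s')^{-1/2}$, which is integrable against the damping with $\int_0^\infty e^{-\nu_0\tau/\eps}\tau^{-1/2}\,d\tau\sim\eps^{1/2}$; together with $\eps^{-2}$ and the outer time integral this yields $\eps^{-1/2}\sup\|P_0^MR/\sqrt{M_-}\|_6$, i.e.\ $\eps^{1/2}\sup\|\cdot\|_6$ after the global factor $\eps$. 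For the microscopic part only Cauchy--Schwarz is available, $\int k_{w,M}(v',v'')|h|\,dv''\lesssim\|P_1^MR(s',x'',\cdot)/\sqrt{M_-}\|_{L^2_v}$; the same change of variables with H\"older $L^2_y$--$L^2$ gains only $(s-s')^{3/2}$, so the net power $(s-s')^{-3/2}$ is non-integrable and I split the $s'$-integral at $s-s'=\delta$: on $\{s-s'<\delta\}$ I do not change variables and bound the inner velocity integral by $\langle v'\rangle^{-1}\|h\|_\infty$, giving a contribution $\lesssim\tfrac{\delta}{\eps}\|h\|_\infty$; on $\{s-s'\ge\delta\}$ the change of variables gives $\lesssim\delta^{-3/2}\eps\sup\|P_1^MR/\sqrt{M_-}\|_2$. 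Choosing $\delta\sim\eps$ makes the first a small multiple of $\|h\|_\infty$ and the second $\sim\eps^{-3/2}\sup\|P_1^MR/\sqrt{M_-}\|_2$, i.e.\ $\eps^{-1/2}\sup\|\cdot\|_2$ after the global $\eps$. Collecting everything, multiplying by $\eps$, and absorbing the small multiples of $\eps\|w_\ell R/\sqrt{M_-}\|_\infty$ on the left gives \eqref{point1}; the boundary cycles are treated identically, the intermediate velocity integrals now carrying the probability measures $d\sigma_j$ and $d\Sigma^w_l$ of Lemma \ref{k} in place of the interior time integrals.

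\emph{The variants \eqref{point2} and \eqref{point3}.} For \eqref{point2} one repeats the previous step without splitting $R$, using $\int k_{w,M}(v',v'')|h|\,dv''\lesssim\|R(s',x'',\cdot)/\sqrt{M_-}\|_{L^2_v}$ and the $(s-s')^{-3/2}$-splitting at $\delta\sim\eps$, which produces $\eps^{-1/2}\sup\|R/\sqrt{M_-}\|_2$ (a coarser bound that discards the macroscopic gain but is sometimes more convenient). For \eqref{point3}, differentiating \eqref{Rln2} in $t$ (note $P_\ga$, built on the $t$-independent wall Maxwellian $M^w$, commutes with $\pa_t$) shows that $\pa_tR$ solves the same linear problem with boundary datum $\pa_tr$ and source $\pa_tg+\tfrac1\eps(K_{\pa_tM}-\nu_{\pa_tM})R$. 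Applying \eqref{point2} to $\pa_tR$, the only new term is $\eps^2\cdot\tfrac1\eps\|w_{\ell-1}(K_{\pa_tM}-\nu_{\pa_tM})R/\sqrt{M_-}\|_\infty$; since $K_{\pa_tM},\nu_{\pa_tM}$ are the same operators with $M$ replaced by the bounded factor $\pa_tM$, and $\|\pa_t[\rho,u,\ta]\|_{L^\infty}\lesssim\ka_0\eps$ by Theorem \ref{NSsol} and Sobolev embedding (using $m_0\ge 6$), this term is $\lesssim\ka_0\eps\sup\|w_\ell R/\sqrt{M_-}\|_\infty$, exactly the extra term in \eqref{point3}.

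\emph{Main obstacle.} The delicate point is the $\eps$-uniform bookkeeping in the double-Duhamel/change-of-variables step: the near-diagonal cutoff $\delta$ (and, if one also truncates the kernel, the cutoff $N$) must be chosen, with $\delta$ comparable to $\eps$, so that simultaneously the near-diagonal (and kernel-tail) contributions have coefficient strictly less than $1$ in front of $\|w_\ell R/\sqrt{M_-}\|_\infty$ --- so they can be absorbed --- while the far-diagonal contribution reproduces \emph{exactly} the singular weights $\eps^{-1/2}$ on $\|P_0^MR/\sqrt{M_-}\|_6$ and $\eps^{-3/2}$ on $\|P_1^MR/\sqrt{M_-}\|_2$; it is this balance that forces the different powers of $\eps$ on the macroscopic and microscopic parts. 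A secondary but pervasive burden is that $k_M$ and the characteristic/cycle estimates are attached to the moving Maxwellian $M_{[\rho,u,\ta]}$ rather than to a global one, so every Grad-type bound, every comparison with $M_-$, and the lower bound $\nu_M\gtrsim\langle v\rangle$ must be shown uniform in the profile, which is legitimate only because Theorem \ref{NSsol} keeps $[\rho,u,\ta]$ uniformly close to $[1,0,1]$.
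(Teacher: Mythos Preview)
Your proposal is correct and follows essentially the same $L^2$--$L^\infty$ strategy as the paper: the weighted unknown $h=w_\ell R/\sqrt{M_-}$, the stochastic-cycle representation with Lemma~\ref{k}, the double Duhamel iteration, the change of variables $v'\mapsto y$, and the macro/micro split yielding the $L^6$ and $L^2$ bounds with the stated $\eps$-powers. The paper differs only in presentation: it makes the kernel truncation $K_{w,m}$ and the time-interval iteration over $[nT_0,(n+1)T_0]$ explicit (so the absorption constant is produced as $CT_0^{5/4}(1/2)^{C_2T_0^{5/4}}+o(1)CT_0^{5/2}$ and then summed geometrically), whereas you integrate the macroscopic $(s-s')^{-1/2}$ singularity directly via $\int_0^\infty e^{-\nu_0\tau/\eps}\tau^{-1/2}\,d\tau\sim\eps^{1/2}$ and mention the truncation only in passing---a legitimate and slightly cleaner variant.
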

\begin{proof}
Define $h=\frac{w_\ell R}{\sqrt{M_-}}$, $K_{M,w}(\cdot)=\frac{w_\ell}{\sqrt{M_-}}K_M \left(\frac{\sqrt{M_-}}{w_\ell}\cdot\right)$ and $\widetilde{w}_\ell=\frac{1}{w_\ell\sqrt{M_-}}$, then we see from \eqref{Rln2} that $h$ satisfies
\begin{eqnarray*}
\left\{\begin{array}{rll}\begin{split}
&\pa_th+v\cdot\na_xh+\frac{1}{\eps}\nu_Mh=\frac{1}{\eps}K_{M,w}h+\frac{w_\ell}{\sqrt{M_-}}g,\\
&h(0,x,v)= h_0(x,v),\\
&h_{-}=\frac{w_\ell M^w}{\sqrt{M_-}}\int_{n(x)\cdot v'>0}\left(\frac{h}{w_\ell}\sqrt{M_-}\right)(t,x,v')(n(x)\cdot v')dv'+
\eps^{-1/2}\frac{w_\ell r}{\sqrt{M_-}}\\&
\quad=\frac{1}{\widetilde{w}_\ell}\int_{n(x)\cdot v'>0}h(t,x,v')\widetilde{w}_\ell(v')M^w(v')(n(x)\cdot v')dv'+
\eps^{-1/2}\frac{w_\ell r}{\sqrt{M_-}}\\&
\quad\eqdef
P^w_\ga R+\eps^{-1/2}\frac{w_\ell r}{\sqrt{M_-}}.
\end{split}\end{array}\right.
\end{eqnarray*}

We now intend to show for $t\in[nT_0,(n+1)T_0]$ with any $n\in\N$ and $T_0$ in Lemma \ref{k} that
\begin{equation}\label{claim1}
\begin{split}
 \eps\left\|h(t)\right\|_{\infty}
  \leq&CT_0^{5/2}e^{-\frac{\nu_0(t-nT_0)}{2\eps}}
   \eps\left\|  h(nT_0) \right\|_{\infty} +  C_{T_0}\sup_{0 \leq s \leq t}\eps^{\frac 1 2}\left\|  w_\ell \frac{r(s)}{\sqrt{M_-}} \right\|_{\infty} \\&  +  C_{T_0}\sup_{0 \leq s \leq t} \eps^{2}\left\|  w_{\ell-1} \frac{g(s)}{\sqrt{M_-}}\right\|_{\infty}+  CT_0^{5/2}\sup_{nT_0\leq s \leq  (n+1)T_0}\eps^{1/2}\left\|\frac{P_0^M R(s)}{\sqrt{M_-}}\right\|_{6} \\
  &+
 CT_0^{5/2} \eps^{-1/2}  \sup_{nT_0\leq s \leq  (n+1)T_0}\left\|\frac{P_1^M R(s)}{\sqrt{M_-}}\right\|_2
 \\&+\left[CT_0^{5/4}\left(\frac{1}{2}\right)^{C_2T_0^{5/4}}+o(1)CT_0^{5/2}\right]\sup_{nT_0\leq s \leq  (n+1)T_0} \eps\left\|h(s)\right\|_{\infty},
\end{split}
\end{equation}
and
\begin{equation}\label{claim2}
\begin{split}
 \eps\left\|h(t)\right\|_{\infty}
  \leq&CT_0^{5/2}e^{-\frac{\nu_0(t-nT_0)}{2\eps}}
   \eps\left\|  h(nT_0) \right\|_{\infty} +  C_{T_0}\sup_{0 \leq s \leq t}\eps^{\frac 1 2}\left\|  w_\ell \frac{r(s)}{\sqrt{M_-}} \right\|_{\infty} \\&  +  C_{T_0}\sup_{0 \leq s \leq t} \eps^{2}\left\|  w_{\ell-1} \frac{g(s)}{\sqrt{M_-}}\right\|_{\infty}+  CT_0^{5/2}\sup_{nT_0\leq s \leq  (n+1)T_0}\eps^{-1/2}\left\|\frac{ R(s)}{\sqrt{M_-}}\right\|_{2} \\
  &+\left[CT_0^{5/4}\left(\frac{1}{2}\right)^{C_2T_0^{5/4}}+o(1)CT_0^{5/2}\right]\sup_{nT_0\leq s \leq  (n+1)T_0} \eps\left\|h(s)\right\|_{\infty},
\end{split}
\end{equation}
where $\nu_0$ is a constant such that $\nu_M\geq\nu_0$ for all $v\in \R^3$ and $[\rho,u,\ta]\in\FX_\eps.$

To prove \eqref{claim1} and \eqref{claim2}, we only consider the case $n=0$, $n\geq1$ being similar. To do this,  along the stochastic cycles, we first have for $k=C_1T_0^{5/4}$,
\begin{equation}\label{iteration}
\begin{split}
  |h(t,x,v)| \leq&
\underbrace{\left\{\mathbf{1}_{t_{1}\leq 0}
\int_{0}^{t}+\mathbf{1}_{t_{1}>0}
\int_{t_1}^{t}\right\}
e^{-\frac{\nu_M(v)}{\eps}(t-s)}\frac{1}{\eps}|K_{M,w}h(s,x-(t-s){v},v)|ds}_{I_1}
\\
&
+\underbrace{\left\{\mathbf{1}_{t_{1}\leq 0}
\int_{0}^{t}+\mathbf{1}_{t_{1}>0}
\int_{t_1}^{t}\right\}e^{-\frac{\nu_M(v)}{\eps}(t-s)}\left|\frac{w_\ell}{\sqrt{M_-}}g(s,x-(t-s){v},v)\right|ds}_{I_2}
\\&
+\underbrace{\mathbf{1}_{t_{1}>0}\eps^{-1/2}e^{-\frac{\nu_M(v)}{\eps}(t-t_1)}\frac{w_\ell }{\sqrt{M_-}}(v)|r(t_1,x_1,v)|}_{I_3}+
\sum\limits_{n=4}^{8}I_n,
\end{split}
\end{equation}%
with
\begin{equation*}
\begin{split}
I_4=&\mathbf{1}_{t_{1}\leq 0}e^{-\frac{\nu_M(v)}{\eps}t}|h(0,x-t{v},v)|
\\&+\frac{e^{-\frac{\nu_M (v)}{\eps}(t-t_{1})}}{\widetilde{w}_l}\int_{\prod_{j=1}^{k-1}%
\mathcal{V}_{j}}\sum_{l=1}^{k-1}\mathbf{1}_{\{t_{l+1}\leq
0<t_{l}\}}|h(0,x_{l}-t_{l}{v}_{l},v_{l})|d\Sigma _{l}(0),
\end{split}
\end{equation*}
\begin{equation*}
\begin{split}
I_5=&\frac{e^{-\frac{\nu_M (v)}{\eps}(t-t_{1})}}{\widetilde{w}_\ell}\bigg\{\int_{\prod_{j=1}^{k-1}%
\mathcal{V}_{j}}\mathbf{1}_{\{t_{l+1}\leq
0<t_{l}\}}\sum_{l=1}^{k-1}\int_{0}^{t_l}\frac{1}{\eps}|[K_{M,w} h](s,x_{l}-(t_{l}-s){v}_{l},v_{l})|d\Sigma _{l}(s)ds
\\&+\int_{\prod_{j=1}^{k-1}%
\mathcal{V}_{j}}\sum_{l=1}^{k-1}\mathbf{1}_{\{0<t_{l+1}\}}\int_{t_{l+1}}^{t_{l}}\frac{1}{\eps}
|[K_{M,w}h](s,x_{l}-(t_{l}-s){v}_{l},v_{l})|d\Sigma _{l}(s)ds\bigg\},
\end{split}
\end{equation*}
\begin{equation*}
\begin{split}
I_6=&\frac{e^{-\frac{\nu_M (v)}{\eps}(t-t_{1})}}{\widetilde{w}_\ell}\bigg\{\int_{\prod_{j=1}^{k-1}%
\mathcal{V}_{j}}\sum_{l=1}^{k-1}\mathbf{1}_{\{t_{l+1}\leq
0<t_{l}\}}\int_{0}^{t_l}\left|\frac{w_\ell}{\sqrt{M_-}}g(s,x_{l}-(t_{l}-s){v}_{l},v_{l})\right|d\Sigma _{l}(s)ds \\&+\int_{\prod_{j=1}^{k-1}%
\mathcal{V}_{j}}\sum_{l=1}^{k-1}\mathbf{1}_{\{0<t_{l+1}\}}\int_{t_{l+1}}^{t_{l}}
\left|\frac{w_\ell}{\sqrt{M_-}}g(s,x_{l}-(t_{l}-s){v}_{l},v_{l})\right|d\Sigma _{l}(s)ds\bigg\},
\end{split}
\end{equation*}
\begin{equation*}
I_7=\frac{e^{-\frac{\nu_M (v)}{\eps}(t-t_{1})}}{\widetilde{w}_\ell}\int_{\prod_{j=1}^{k-1}%
\mathcal{V}_{j}}\mathbf{1}_{\{0<t_{k}\}}|h(t_{k},x_{k},v_{k-1})|d\Sigma
_{k-1}(t_{k}),\ \ k\geq2,
\end{equation*}
\begin{equation*}
I_8=\eps^{-1/2}\frac{e^{-\frac{\nu_M (v)}{\eps}(t-t_{1})}}{\widetilde{w}_\ell}\sum\limits_{l=1}^{k-1}\mathbf{1}_{0<t_{l+1}}d\Sigma_l^r,\ \ k\geq2,
\end{equation*}
and
\begin{eqnarray*}
d\Sigma_{l}(s) &=&\left\{\Pi _{j=l+1}^{k-1}d\sigma_{j}\right\}\times\left\{e^{\frac{\nu_M(v_{l})(s-t_{l})}{\eps}}
\tilde{w}_\ell(v_{l})d\sigma_{l}\right\}\times \Pi _{j=1}^{l-1}
\left\{e^{\frac{\nu_M (v_{j})(t_{j+1}-t_{j})}{\eps}} d\sigma_{j}\right\},
\end{eqnarray*}
\begin{equation*}
\begin{split}
d\Sigma^r_{l} =&\{\Pi _{j=l+1}^{k-1}d\sigma_{j}\}\times \left\{e^{\frac{\nu_M(v_{l})(t_{l+1}-t_{l})}{\eps}}
\tilde{w}_\ell(v_{l})\frac{w_\ell }{\sqrt{M_-}}(v_l)|r(t_{l+1},x_{l+1},v_l)|d\sigma_{l}\right\}
\\&\times \Pi _{j=1}^{l-1}
\left\{e^{\frac{\nu_M (v_{j})(t_{j+1}-t_{j})}{\eps}} d\sigma_{j}\right\}.
\end{split}
\end{equation*}
We as follows estimate $I_n$ $(1\leq n\leq8)$ term by term. First of all, it is straightforward to see that
\begin{equation*}
I_2\leq C\eps\sup\limits_{0\leq s\leq t}\left\|\frac{w_{\ell-1}}{\sqrt{M_-}}g(s)\right\|_{\infty},\ \ I_3\leq C\sup_{0 \leq s \leq t}\eps^{-1/2}\left\|  w_\ell \frac{r(s)}{\sqrt{M_-}} \right\|_{\infty}.
\end{equation*}
Next, in light of Lemma \ref{k}, one has
\begin{equation*}
I_4\leq Ce^{-\frac{\nu_0t}{2\eps}}\left\|h_0\right\|_{\infty},
\end{equation*}
\begin{equation*}
\begin{split}
I_7\leq& C\int_{\prod_{j=1}^{k-2}%
\mathcal{V}_{j}}\mathbf{1}_{\{0<t_{k-1}\}}\Pi _{j=1}^{k-2} d\sigma _{j}
\sup_{0\leq s\leq t}\Vert h(s)\Vert _{\infty }
\leq
C\left\{ \frac{1}{2}\right\} ^{C_{2}T_0^{5/4}}\sup_{0\leq s\leq t}\Vert h(s)\Vert _{\infty},
\end{split}
\end{equation*}
and
\begin{equation*}
\begin{split}
I_6\leq C\eps\sup\limits_{0\leq s\leq t}\left\|\frac{w_{\ell-1}}{\sqrt{M_-}}g(s)\right\|_{\infty},\ \
I_8\leq C\sup_{0 \leq s \leq t}\eps^{-1/2}\left\|  w_\ell \frac{r(s)}{\sqrt{M_-}} \right\|_{\infty}.
\end{split}
\end{equation*}

We now conclude from \eqref{iteration} and the above estimates for $I_2$, $I_3$, $I_4$, $I_6$, $I_7$ and $I_8$ that
\begin{equation}\label{hmain}
\begin{split}
|h(t,x,v)|
\leq& \left\{\mathbf{1}_{t_{1}\leq 0}
\int_{0}^{t}+\mathbf{1}_{t_{1}>0}
\int_{t_1}^{t}\right\}e^{-\frac{\nu_M
(v)}{\eps}(t-s)}\frac{1}{\eps}|K_{M,w}h(s,x-(t-s)v,v)|ds \\
&+\frac{e^{-\frac{\nu_M (v)}{\eps}(t-t_{1})}}{\tilde{w}_\ell}\times
\int_{\prod_{j=1}^{k-1}\mathcal{V}_{j}}\sum_{l=1}^{k-1}\bigg\{%
\int_{0}^{t_{l}}\mathbf{1}_{\{t_{l+1}\leq 0<t_{l}\}}\frac{1}{\eps}|K_{M,w}h(s,X_{\mathbf{cl}}(s),v_{l})|   \\
&+\int_{t_{l+1}}^{t_{l}}\mathbf{1}_{\{0<t_{l+1}\}}\frac{1}{\eps}|K_{M,w}h(s,X_{\mathbf{cl}}(s),v_{l})|\bigg\}d\Sigma _{l}(s)ds+\CR(t)
\eqdef I_9+I_{10}+\CR(t),
\end{split}
\end{equation}
where 
\begin{equation*}
\begin{split}
\CR(t)=&
C\eps\sup\limits_{0\leq s\leq t}\left\|\frac{w_{\ell-1}}{\sqrt{M_-}}g(s)\right\|_{\infty}+Ce^{-\frac{\nu_0t}{2\eps}}\Vert h(0)\Vert _{\infty}
\\&+C\left(\frac{1}{2}\right)^{C_2T_0^{5/4}}\sup_{0\leq s\leq t}\Vert h(s)\Vert _{\infty}
+CT_0^{5/4}\sup_{0 \leq s \leq t}\eps^{-1/2}\left\|  w_\ell \frac{r(s)}{\sqrt{M_-}} \right\|_{\infty}.
\end{split}
\end{equation*}
We compute next the delicate terms $I_9$ and $I_{10}$. By splitting the time integration into a ``small part" compared with $\eps$ and a ``remainder part" which ensures an invertible variable transformation, one has
\begin{equation*}
\begin{split}
I_9=\int_{t-\ka\eps}^{t}\cdots+\underbrace{\int_{\max\{t_1,0\}}^{t-\ka\eps}\cdots}_{I_{9,1}},
\ \ I_{10}=\int_{t_l-\ka\eps}^{t_l}\cdots+\underbrace{\int_{\max\{t_{l+1},0\}}^{t_l-\ka\eps}\cdots}_{I_{10,1}}.
\end{split}
\end{equation*}
It follows from direct calculations and Lemma \ref{k} that the ``small part" in $I_9$ and $I_{10}$ above can be controlled by
\begin{equation*}
\begin{split}
C\ka T_0^{5/4}\sup_{0\leq s\leq t}\Vert h(s)\Vert _{\infty}.
\end{split}
\end{equation*}
For the ``remainder part", we further decompose the velocity integration into a suitable ``bounded domain" and a ``unbounded domain". For the ``bounded domain", one can transform the $L^\infty$ norm into $L^2$ or $L^6$ norm which have been constructed in Lemma \ref{l2-l6} in Section \ref{l2-l6th}, while the ``unbounded domain" will be majorized by the decay property of the operator $K_{M,w}$, cf. Lemma \ref{K}.
To be more specific, we split $K_{M,w}=K_{M,w}-K_{w,m}+K_{w,m}$ with
\begin{equation*}
K_{w,m} (v,v') : = \mathbf{1}_{|v-v'|\geq \frac{1}{m}}
\mathbf{1}_{|v| \leq m} \mathbf{1}_{|v'| \leq m}
K_{M,w} (v,v'),
\end{equation*}
and
\begin{equation*}
\sup_{v} \int_{\mathbb{R}^{3}} | K_{w,m} (v,v')
-K_{M,w} (v,v') | d v' \leq \frac{1}{N(m)}, \ \ N(m)\gg 1.
\end{equation*}
The difference $K_{M,w}-K_{w,m}$ would lead to a small contribution in $I_{9,1}$ and $I_{10,1}$ as, for $N(m) \gg T_0^{5/4}$,
\begin{equation*}
\frac{k}{N(m)}   \sup_{0 \leq s \leq t}\|h (s) \|_{\infty} \leq \frac{ C T_{0}^{5/4}}{N(m)} \sup_{0 \leq s \leq t}\| h (s) \|_{\infty}.
\end{equation*}
Plugging the above small contributions into \eqref{hmain} and noticing that $K_{w,m}$ is bounded, we arrive at
\begin{equation}\label{hmain2}
\begin{split}
|h(t,x,v)|
\leq& C\int_{\max\{t_1,0\}}^{t-\ka\eps}\frac{1}{\eps}e^{-\frac{\nu_M(v)}{\eps}(t-s)}\int_{|v'|\leq m}|h(s,x-(t-s)v,v')|dv'ds \\
&+\frac{C}{\eps}\frac{e^{-\frac{\nu_M (v)}{\eps}(t-t_{1})}}{\tilde{w}_\ell}
\int_{\prod_{j=1}^{k-1}\mathcal{V}_{j}}\sum_{l=1}^{k-1}
\int_{\max\{t_{l+1},0\}}^{t_l-\ka\eps}\int_{|v'|\leq m}|h(s,X_{\mathbf{cl}}(s),v')|dv'   d\Sigma _{l}(s)ds\\&
+\CR(t)+o(1)T_{0}^{5/4}\sup_{0 \leq s \leq t}\| h (s) \|_{\infty}.
\end{split}
\end{equation}
Let us now denote $(t_{0}',x_{0}',v_{0}')=(s,X_{\mathbf{cl}}(s),v')$, for $v_{l'+1}'\in \mathcal {V}'_{l'+1}=\{v_{l'+1}'\cdot n(x_{l'+1}')>0\},$
and define a new back-time cycle as
$$
(t_{l'+1}',x_{l'+1}',v_{l'+1}')=(t_{l'}'-t_{\mathbf{b}}(x_{l'}',v_{l'}'),x_{\mathbf{b}}(x_{l'}',v_{l'}'),v_{l'+1}').
$$
We then
iterate \eqref{hmain2} to get a more elaborate  estimate as
\begin{equation*}
\begin{split}
|h(t,x,v)|
\leq& \frac{C}{\eps^2}\int_{\max\{t_1,0\}}^{t-\ka\eps}e^{-\frac{\nu_M(v)}{\eps}(t-s)}\iint\limits_{|v'|\leq m,|v''|\leq m} \int_{\max\{t_1',0\}}^{s-\ka\eps}e^{-\frac{\nu_M(v^{\prime })}{\eps}(s-s_{1})}\\&\qquad\qquad\times |h(s_{1},X_{\mathbf{cl}}(s)-(s-s_{1})v^{\prime },v^{\prime \prime
})|ds_{1}dv^{\prime }dv^{\prime \prime }ds\\
&+\frac{C}{\eps^2}\int_{\max\{t_1,0\}}^{t-\ka\eps}e^{-\frac{\nu_M(v)}{\eps}(s-s_1)}\iint\limits_{|v'|\leq m,|v''|\leq m}
\frac{e^{-\frac{\nu_M (v')}{\eps}(s-t'_{1})}}{\tilde{w}_\ell}
\int_{\prod_{j=1}^{k-1}\mathcal{V}_{j}}\sum_{l'=1}^{k-1}
\int_{\max\{t'_{l'+1},0\}}^{t'_{l'}-\ka\eps}\\&\qquad\qquad\times|h(s_{1,}x_{l^{\prime }}^{\prime
}+(s_{1}-t_{l^{\prime }}^{\prime })v_{l^{\prime }}^{\prime },v^{\prime
\prime })|dv''dv'
 d\Sigma_{l'}(s_1)ds_1ds \\
&+\frac{C}{\eps^2}\frac{e^{-\frac{\nu_M (v)}{\eps}(t-t_{1})}}{\tilde{w}_\ell}
\int_{\prod_{j=1}^{k-1}\mathcal{V}_{j}}\sum_{l=1}^{k-1}\iint\limits_{|v'|\leq m,|v''|\leq m}
\int_{\max\{t_{l+1},0\}}^{t_l-\ka\eps}\int_{\max\{t'_1,0\}}^{s-\ka\eps}e^{-\frac{\nu_M(v')}{\eps}(s-s_1)}
\\&\qquad\qquad\times|h(s_1,X_{\mathbf{cl}}(s_1)-(s-s_1)v',v'')|dv'dv''   d\Sigma _{l}(s)dsds_1
\\
&+\frac{C}{\eps^2}\frac{e^{-\frac{\nu_M (v)}{\eps}(t-t_{1})}}{\tilde{w}_\ell}
\int_{\prod_{j=1}^{k-1}\mathcal{V}_{j}}\sum_{l=1}^{k-1}\iint\limits_{|v'|\leq m,|v''|\leq m}
\int_{\max\{t_{l+1},0\}}^{t_l-\ka\eps}\frac{e^{-\frac{\nu_M(v')}{\eps}(s-t'_1)}}{\tilde{w}_\ell(v')}
\\&\quad\times\int_{\prod_{j=1}^{k-1}\mathcal{V}'_{j}}\sum_{l'=1}^{k-1}
\int_{\max\{t'_1,0\}}^{t'_{l'}-\ka\eps}
|h(s_1,x_{l^{\prime }}^{\prime }+(s_{1}-t_{l^{\prime}}^{\prime })v_{l^{\prime }}^{\prime },v'')|dv'dv''d\Sigma _{l'}(s_1)   d\Sigma _{l}(s)dsds_1\\
&+T_0^{5/4}\CR(t)+o(1)T_{0}^{5/4}\sup_{0 \leq s \leq t}\| h (s) \|_{\infty}\\ \eqdef&\sum\limits_{N=11}^{13}I_n
+T_0^{5/4}\CR(t)+o(1)T_{0}^{5/4}\sup_{0 \leq s \leq t}\| h (s) \|_{\infty}.
\end{split}
\end{equation*}
Notice that the Jacobian determinants
$$
\left|\frac{\pa(X_{\mathbf{cl}}(s)-(s-s_{1})v^{\prime })}{\pa v'}\right|,\ \ \left|\frac{\pa(x_{l^{\prime }}^{\prime }+(s_{1}-t_{l^{\prime}}^{\prime })v_{l^{\prime }}^{\prime })}{\pa v'}\right|\gtrsim \ka^3\eps^{3}.
$$
In light of Lemma \ref{k}, one has, by performing the similar calculations as \cite[Proposition 3.2, pp.489]{LY-2016} and \cite[Lemma 4.2, pp.204]{EGKM-13},
\begin{equation}\label{upl6-l3}
\sum\limits_{n=11}^{13}|I_n|\leq C\eps^{-3/2}  \sup_{0 \leq s \leq  t}\left\|\frac{ R(s)}{\sqrt{M_-}}\right\|_2,
\end{equation}
or
\begin{equation}\label{upl2}
\sum\limits_{n=11}^{13}|I_n|\leq C\eps^{-1/2}  \sup_{0 \leq s \leq  t}\left\|\frac{ P_0^MR(s)}{\sqrt{M_-}}\right\|_6
+\eps^{-3/2}  \sup_{0 \leq s \leq  t}\left\|\frac{ P_1^MR(s)}{\sqrt{M_-}}\right\|_2.
\end{equation}
Thus, \eqref{claim1} and \eqref{claim2} are valid.

Finally, using (\ref{claim1}) $n$ times gives
\begin{equation*}
\begin{split}
\|\eps h( n  T_{0})\|_{\infty}
\leq & CT_{0}^{5/2} e^{- \frac{\nu_{0} T_{0}}{2\eps}} \| \eps h((n-1) T_{0}) \|_{\infty} +  \sup_{(n-1)  T_{0} \leq s \leq n  T_{0} }  D(s)
\\
\leq& \Big[ CT_{0}^{5/2} e^{- \frac{\nu_{0} T_{0}}{2\eps}}\Big]^{2}\|\eps  h((n-2)  T_{0}) \|_{\infty} + \sum_{j=0}^{1}  \Big[ CT_{0}^{5/2}
e^{-\frac{\nu_0T_0}{\eps}}\Big]^{j}
 \sup_{(n-2) T_{0} \leq s \leq n  T_{0}}D(s)
\\
\vdots&  \\
\leq & \Big[ CT_{0}^{5/2} e^{- \frac{\nu_{0} T_{0}}{2\eps}}\Big]^{n}\| \eps h_{0}  \|_{\infty}  +  \sum_{j=0}^{n-1} \Big[ CT_{0}^{5/2} e^{- \frac{\nu_{0} T_{0}}{\eps}}\Big]^{j}
  \sup_{0 \leq s \leq  n  T_{0}}  D(s),
  \end{split}
\end{equation*}
where
\begin{equation*}
\begin{split}
D(s)=&
  CT_0^{5/2}\eps^{1/2}\left\|  w \frac{r (s)}{\sqrt{M_-}} \right\|_{\infty} +  C T_{0}^{5/2}\eps^{2}  \left\| \langle v\rangle^{-1}  w \frac{g(s)}{\sqrt{M_-}}\right\|_{\infty} + C T_{0}^{5/2}\eps
  \left\| \frac{P_0^MR}{\sqrt{M_-}} \right\|_{6}  \\
&+ C T_{0}^{5/2} \eps^{-1/2}
    \left\| \frac{P_1^MR}{\sqrt{M_-}} \right\|_{2}
 +  \left[  CT_{0}^{5/4} \Big\{\frac{1}{2}\Big\}^{C_{2} T_{0}^{5/4}} + o(1) CT_{0}^{5/4} \right]  \eps\|  h(s)\|_{\infty}.
 \end{split}
\end{equation*}
Notice that $\sum\limits_{j} \Big[ CT_{0}^{5/2} e^{- \frac{\nu_{0} T_{0}}{2\eps}}\Big]^{j} < \infty$.

Combining the above estimate with (\ref{claim1}), for $t \in [n  T_{0}, (n+1)  T_{0}]$, and absorbing the last term, one has
\begin{equation*}
\begin{split}
CT_{0}^{5/2}& e^{- \frac{\nu_{0} ( t- n  T_{0})}{2\eps}} \sum_{j=0}^{n-1} \Big[ CT_{0}^{5/2} e^{- \frac{\nu_{0} T_{0}}{2\eps}}\Big]^{j}\left[CT_{0}^{5/4}\Big\{\frac{1}{2} \Big\}^{C_{2} T_{0}^{5/4}}   + o(1)CT_{0}^{5/4}    \right]   \sup_{0 \leq s \leq  t}
   \eps\| h (s)\|_{\infty}\\
   \lesssim&   \frac{ T_{0}^{5/2}}{1- CT_{0}^{5/2} e^{- \frac{\nu_{0} T_{0}}{2\eps}}} \left[CT_{0}^{5/4}\Big\{\frac{1}{2} \Big\}^{C_{2} T_{0}^{5/4}}   + o(1)CT_{0}^{5/2}    \right] \times  \sup_{0 \leq s \leq  t}
   \eps\| h (s)\|_{\infty} \\ \lesssim& \  o(1) \sup_{0 \leq s \leq  t}
  \eps \| h (s)\|_{\infty},
\end{split}
\end{equation*}
where we used $$ T_{0}^{5/2}\left[CT_{0}^{5/4}\Big\{\frac{1}{2} \Big\}^{C_{2} T_{0}^{5/4}}   + o(1)CT_{0}^{5/2}    \right] \ll 1. $$
Therefore \eqref{point1} is valid. \eqref{point2} and \eqref{point3} can be proved similarly, we skip the details here for brevity.
\end{proof}

\section{$L^3$ estimates}

In this section, we deduce a crucial $L^3$ estimate of the macroscopic part $P_0^MR$, such an estimate plays a significant role in
controlling the nonlinear operator $Q(P_0^MR,P_0^MR)$.
\begin{lemma}\label{l3lem}
Let $\frac{\widetilde{R}}{\sqrt{M_-}}\in L_{t,x,v}^2(\R\times\R^3\times\R^3)$ be a function satisfying the transport equation
\be  \pa_t \widetilde{R}+v\cdot \nabla_x \widetilde{R}=\widetilde{g},\notag\ee
in the sense of distributions, with $\frac{\widetilde{g}}{\sqrt{M_-}}\in L_{t,x,v}^2(\R\times\R^3\times\R^3)$. Let $\psi_e$ be a test function that decays very quickly as $|v|\rightarrow\infty$. Then it holds that
\be\notag
\left\| \left\langle \frac{\widetilde{R}}{\sqrt{M_-}}, \psi_e\right\rangle\right\|_{L^2_tL^3_x}\lesssim \left\|\langle v\rangle^{-1/2}\frac{\widetilde{g}}{\sqrt{M_-}}\right\|_{L^2_{t,x,v}}.
\ee
\end{lemma}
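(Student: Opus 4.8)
The plan is to read Lemma~\ref{l3lem} as a velocity‑averaging (dispersive, Strichartz‑type) estimate for the free transport equation in three space dimensions, whose $L^3_x$ conclusion is ultimately the critical Sobolev embedding $\dot{H}^{1/2}(\R^3)\hookrightarrow L^3(\R^3)$. The first observation is that, on the full space $\R\times\R^3\times\R^3$, the homogeneous operator $\pa_t+v\cdot\na_x$ has no nonzero $L^2_{t,x,v}$ solution, so the hypothesis $\widetilde{R}/\sqrt{M_-}\in L^2_{t,x,v}$ identifies $\widetilde{R}$ with the Duhamel solution driven by $\widetilde{g}$ alone, $\widetilde{R}(t,x,v)=\int_{-\infty}^{t}\widetilde{g}(s,x-(t-s)v,v)\,ds$. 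Writing $G:=\widetilde{g}/\sqrt{M_-}$ and noting that $\psi_e$ stays rapidly decaying (it decays faster than the Gaussian $\sqrt{M_-}$), a Fourier transform in $(t,x)$ inverts the symbol $i(\tau+v\cdot\xi)$ and gives
\begin{equation*}
\widehat{\rho}(\tau,\xi)=\int_{\R^3}\frac{\psi_e(v)\,\widehat{G}(\tau,\xi,v)}{i(\tau+v\cdot\xi)}\,dv,\qquad \rho(t,x):=\left\langle\frac{\widetilde{R}(t,x,\cdot)}{\sqrt{M_-}},\psi_e\right\rangle .
\end{equation*}

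The core step is the half‑derivative gain
\begin{equation*}
\big\|\,|\xi|^{1/2}\,\widehat{\rho}\,\big\|_{L^2_{\tau,\xi}}\lesssim\big\|\,\langle v\rangle^{-1/2}\,\widehat{G}\,\big\|_{L^2_{\tau,\xi,v}},
\end{equation*}
which I would prove by the standard averaging‑lemma split of the $v$‑integral into a resonant part $\{|\tau+v\cdot\xi|\le\delta\}$ and a non‑resonant part $\{|\tau+v\cdot\xi|>\delta\}$ for a threshold $\delta$ calibrated to $(\tau,\xi)$. On the non‑resonant part one applies the representation above together with Cauchy--Schwarz in $v$, using that $\int_{|\tau+v\cdot\xi|>\delta}\langle v\rangle|\psi_e(v)|^2|\tau+v\cdot\xi|^{-2}\,dv\lesssim(|\xi|\delta)^{-1}$ by the rapid decay of $\psi_e$; this is precisely the step that forces the weight $\langle v\rangle^{-1/2}$ on $G$. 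On the resonant part, a slab of width $\sim\delta/|\xi|$, one uses the Duhamel structure of $\widetilde{R}$ (equivalently, the transport equation satisfied by $\widetilde{R}$ itself) to bound the contribution again by $\widetilde{g}$, and optimizing over $\delta$ closes the estimate. Then $\rho\in L^2_t\dot{H}^{1/2}_x$ and $\dot{H}^{1/2}(\R^3)\hookrightarrow L^3(\R^3)$, applied for a.e.\ $t$ and followed by integration in $t$, yield $\|\rho\|_{L^2_tL^3_x}\lesssim\|\langle v\rangle^{-1/2}\widetilde{g}/\sqrt{M_-}\|_{L^2_{t,x,v}}$.

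The main obstacle is the interplay between the resonant region and the endpoint character of the estimate: the pair $L^2_t$/$L^3_x$ sits exactly at the critical Sobolev (equivalently, endpoint Strichartz) exponent, so there is no slack, and the resonant‑region contribution must be absorbed purely into $\widetilde{g}$, which is possible only because the whole‑space hypothesis pins $\widetilde{R}$ down as the mild solution from $t=-\infty$. An equivalent organization of the same computation is the dispersive route: establish $\|S(\tau)h\|_{L^\infty_x}\lesssim|\tau|^{-3}\|h\|_{L^1_xL^\infty_v}$ for $S(\tau)h:=\int\psi_e(v)\,h(\cdot-\tau v,v)\,dv$ by the change of variables $w=x-\tau v$, combine it with the energy bound $\|S(\tau)h\|_{L^2_x}\lesssim\|\langle v\rangle^{-1/2}h\|_{L^2_{x,v}}$ obtained from Minkowski's inequality and Cauchy--Schwarz in $v$, and feed the resulting $|\tau|^{-1}$‑decaying $L^3_x$ kernel into the Keel--Tao endpoint summation (dyadic decomposition in $|t-s|$ and a Schur‑type bound); the bookkeeping is heavier, but it makes transparent why $(q,r)=(2,3)$ is precisely the borderline admissible pair in dimension three. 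A routine point throughout is that $\psi_e$ together with a half power of $\langle v\rangle$ are absorbed by Cauchy--Schwarz in $v$, leaving exactly the $\langle v\rangle^{-1/2}$‑weighted $L^2$ norm of $\widetilde{g}/\sqrt{M_-}$ on the right.
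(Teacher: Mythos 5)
The paper offers no proof of Lemma \ref{l3lem} at all (it is stated and then used in Lemma \ref{l3lem2}), so there is nothing internal to compare against; your overall route --- Fourier transform in $(t,x)$, the identity $\widehat{\rho}=\int\psi_e\widehat{G}/(i(\tau+v\cdot\xi))\,dv$, a resonant/non-resonant splitting, a gain of half a derivative, and $\dot H^{1/2}(\R^3)\hookrightarrow L^3(\R^3)$ --- is exactly the classical Golse--Lions--Perthame--Sentis averaging argument that the introduction alludes to. The gap is your treatment of the resonant set $\{|\tau+v\cdot\xi|\le\delta\}$. There the division by $\tau+v\cdot\xi$ cannot be converted into a bound by $\|\langle v\rangle^{-1/2}\widehat{G}\|_{L^2_v}$: Cauchy--Schwarz produces the divergent integral $\int_{|\tau+v\cdot\xi|\le\delta}\langle v\rangle|\psi_e|^2|\tau+v\cdot\xi|^{-2}dv$, and ``using the Duhamel structure'' supplies nothing beyond the same a.e.\ identity $\widehat{F}=\widehat{G}/(i(\tau+v\cdot\xi))$ (your uniqueness observation is correct but is already fully encoded in that identity). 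The classical argument bounds the resonant piece by $(\delta/|\xi|)^{1/2}\|\widehat{F}(\tau,\xi,\cdot)\|_{L^2_v}$ and, after optimizing $\delta$, yields $\|\rho\|_{L^2_t\dot H^{1/2}_x}^2\lesssim\|\widetilde R/\sqrt{M_-}\|_{L^2_{t,x,v}}\|\langle v\rangle^{-1/2}\widetilde g/\sqrt{M_-}\|_{L^2_{t,x,v}}$; the first factor cannot be dispensed with.

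Indeed, the inequality with only $\widetilde g$ on the right is false as stated. Take $\widehat{(\widetilde R/\sqrt{M_-})}(\tau,\xi,v)=\phi(\tau,\xi)\,\mathbf{1}_{\{|\tau+v\cdot\xi|\le\epsilon\}}\chi(v)$ with $\phi$ a fixed bump supported where $|\xi|\sim1$, $|\tau|\le\tfrac12$, and $\chi$ a bump on a unit ball, and define $\widetilde g$ through the equation, i.e.\ $\widehat{(\widetilde g/\sqrt{M_-})}=i(\tau+v\cdot\xi)\widehat{(\widetilde R/\sqrt{M_-})}$. All hypotheses hold; the slab has $v$-measure $\sim\epsilon$, so $\|\langle v\rangle^{-1/2}\widetilde g/\sqrt{M_-}\|_{L^2_{t,x,v}}\lesssim\epsilon\cdot\epsilon^{1/2}=\epsilon^{3/2}$, while $\langle\widetilde R/\sqrt{M_-},\psi_e\rangle$ has Fourier transform $\sim\epsilon\,\phi(\tau,\xi)$ on a unit frequency block and hence $L^2_tL^3_x$ norm $\sim\epsilon$; the ratio of the two sides blows up like $\epsilon^{-1/2}$. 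So the provable (and intended) statement carries $\|\widetilde R/\sqrt{M_-}\|_{L^2_{t,x,v}}$ on the right-hand side as well (equivalently, the geometric mean), and this weaker version is all the application in Lemma \ref{l3lem2} needs, since the source $\bar h$ there already contributes $\|\overline{R}/\sqrt{M_-}\|_{L^2}$ through $\bar h_2$ in \eqref{h1-4}. Your non-resonant computation and the final Sobolev embedding are fine; the alternative dispersive/endpoint-Strichartz sketch at the end inherits exactly the same obstruction and does not repair it.
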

With Lemma \ref{l3lem} in hand, one can now show by using the technique developed in \cite{EGKM-15} the following
critical estimates.
\begin{lemma}\label{l3lem2}
Assume $[\rho,u,\ta]\in\FX_\eps$, $\frac{\sqrt{\eps}g}{\sqrt{M_-}} \in L^{2} (\mathbb{R}_{+} \times \Omega \times \mathbb{R}^{3})$, $\frac{\sqrt{\eps}R_{0}}{\sqrt{M_-}} \in L^{2} (\Omega \times \mathbb{R}^{3})$, and $\sqrt{\eps}R_{\gamma}\in L^{2} (  \mathbb{R}_{+}\times \gamma)$. Let $\sqrt{\eps}R \in L^{\infty}(  \mathbb{R}_{+}; L^{2} (\Omega \times \mathbb{R}^{3}))$ solve
\begin{eqnarray*}
\left\{\begin{split}
&\pa_tR+v\cdot\na_xR=\frac{1}{\eps}L_MR+g,\\
&R(0,x,v)= R_0(x,v),\\
&R_{-}=P_\ga R+\eps^{-1/2}r,
\end{split}\right.
\end{eqnarray*}
in the sense of distribution.
Then, for $\al_0\leq1$, it holds that
\begin{equation}\label{abcp1}
\begin{split}
|\pa_t^{\al_0}[a,b,c]|\lesssim~ S_1^{\al_0}(R)+S_2^{\al_0}(R)+S_3^{\al_0}(R),
 \end{split}
 \end{equation}
with
\begin{equation}\label{abcp2}
\begin{split}
S_1^{\al_0}(R)=&\sum\limits_{i=1}^5\left|\int_{\mathbb{R}^{3}} \frac{\pa^{\al_0}_tR_{\delta}}{\sqrt{M_-}} (t,x,v) \psi_{e,i} (v) d v\right|,\\
S_2^{\al_0}(R)=&\sum\limits_{i=1}^5\int_{\mathbb{R}^{3}} | P_1^M\pa_t^{\al_0}R (t,x,v) |  \frac{|\psi_{e,i}|}{\sqrt{M_-}}(v) d v,\\
S_3^{\al_0}(R)=&\sum\limits_{i=1}^5\int_{\mathbb{R}^{3}}  |\pa_t^{\al_0}R_{0} (x,v)| \frac{|\psi_{e,i}|}{\sqrt{M_-}}(v) d v,
 \end{split}
 \end{equation}
here $[a,b,c](t,x)$, $R_\de$ and $\psi_{e,i}$ $(1\leq i\leq5)$ are given by \eqref{P0R}, \eqref{Rde} and \eqref{pei}, respectively.
Moreover,
\begin{equation}\label{l3es2}
\begin{split}
\sqrt{\eps}\sum\limits_{\al_0\leq1}&\left\|S_1^{\al_0}(R)\right\|_{L^2_tL^3_x}\\
\lesssim&
\sqrt{\eps}\sum\limits_{\al_0\leq1}\left\|\frac{\pa^{\al_0}_tR_0}{\sqrt{M_-}}\right\|_{L^2_{x,v}}
+\sqrt{\eps}\sum\limits_{\al_0\leq1}\left\|\frac{v\cdot\na_x\pa^{\al_0}_tR_0}{\sqrt{M_-}}\right\|_{L^2_{x,v}}
 +\sqrt{\eps}\sum\limits_{\al_0\leq1}\left\|\langle v\rangle^{-1/2}\frac{\pa^{\al_0}_tg}{\sqrt{M_-}}\right\|_{L^2_{t,x,v}}
\\& +\frac{1}{\sqrt{\eps}}\sum\limits_{\al_0\leq1}\left\|\frac{P_1^M\pa_t^{\al_0}R}{\sqrt{M_-}}\right\|_{L^2_{t,x,v}}
 +\ka_0\sqrt{\eps}\left\|\frac{R}{\sqrt{M_-}}\right\|_{L^2_{t,x,v}}
 +\sqrt{\eps}\sum\limits_{\al_0\leq1}\left\|\frac{\pa_t^{\al_0}R}{\sqrt{M_-}}\right\|_{L^2(\R_+\times\ga_+)}
 \\&+\sum\limits_{\al_0\leq1}\left|\frac{\pa_t^{\al_0}r}{\sqrt{M_-}}\right|_{2}
 +\sqrt{\eps}\sum\limits_{\al_0\leq1}\left|\frac{\pa^{\al_0}_tR_0}{\sqrt{M_-}}\right|_{L^2_\ga},
\end{split}
\end{equation}
\begin{equation}\label{l3es3}
\begin{split}
\sum\limits_{\al_0\leq1}\left\|S_2^{\al_0}\right\|_{L^2_tL^2_x}
\lesssim\sum\limits_{\al_0\leq1}\left\|\frac{P_1^M\pa_t^{\al_0}R}{\sqrt{M_-}}\right\|_{L^2_{t,x,v}}\ \text{and}\ \sum\limits_{\al_0\leq1}\left\|S_3^{\al_0}\right\|_{L^\infty_x}
\lesssim\sum\limits_{\al_0\leq1}\left\|\frac{\pa_t^{\al_0}R_0}{\sqrt{M_-}}\right\|_{\infty}.
\end{split}
\end{equation}

\end{lemma}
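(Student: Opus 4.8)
The plan is to prove Lemma \ref{l3lem2} in three stages: first establish the pointwise identity \eqref{abcp1}–\eqref{abcp2} by a weak formulation / local conservation law argument, then bound $S_1^{\al_0}$ using the velocity-averaging estimate of Lemma \ref{l3lem}, and finally record the trivial bounds \eqref{l3es3} for $S_2^{\al_0}$ and $S_3^{\al_0}$ by Cauchy--Schwarz in $v$.

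\medskip
\noindent\emph{Step 1: deriving \eqref{abcp1}.} Fix $\al_0\leq 1$ and apply $\pa_t^{\al_0}$ to the equation for $R$. The key observation, following the scheme of \cite{EGKM-15,Gu16}, is that the macroscopic coefficients $[a,b,c]$ (and their time derivatives) can be recovered from $R$ by testing against an explicit family of functions $\psi_{e,i}$, $1\le i\le 5$, chosen so that $\langle \psi_{e,i}, \chi^M_j\rangle$ forms essentially a unit matrix up to small corrections coming from $[\rho-1,u,\ta-1]$. Concretely, I would multiply the equation by $\psi_{e,i}/\sqrt{M_-}$, integrate in $v$, and use that $\psi_{e,i}\in\ker^\perp(L_M)$ kills the $P_0^M$-part of the collision term and leaves only $P_1^M\pa_t^{\al_0}R$ contributions; the transport term $v\cdot\na_x$ is absorbed into the ``defect'' distribution $R_\de$ (see \eqref{Rde}), which carries the information of $v\cdot\na_x R$ and of the source $g$ and is what Lemma \ref{l3lem} is designed to handle. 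The only subtlety is that because the background is $M_{[\rho,u,\ta]}$ rather than a global Maxwellian, the orthogonality and the pairing matrix are perturbed, producing error terms proportional to $\|[\rho-1,u,\ta-1]\|$ times $[a,b,c]$; these are harmless because of the a priori smallness $\ka_0\eps$ from Theorem \ref{NSsol} and can be absorbed, which is exactly why the $\ka_0\sqrt{\eps}\|R/\sqrt{M_-}\|_{L^2_{t,x,v}}$ term appears on the right of \eqref{l3es2}.

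\medskip
\noindent\emph{Step 2: the $L^2_tL^3_x$ bound on $S_1^{\al_0}$.} This is the heart of the lemma. Each summand of $S_1^{\al_0}$ has the form $|\langle \pa_t^{\al_0}R_\de/\sqrt{M_-},\psi_{e,i}\rangle|$, so I want to apply Lemma \ref{l3lem} with $\widetilde R=\pa_t^{\al_0}R_\de$. For that I must identify the transport equation solved by $\pa_t^{\al_0}R_\de$: from the $R$-equation, $v\cdot\na_x(\pa_t^{\al_0}R)$ equals $-\pa_t^{\al_0+1}R-\eps^{-1}L_M\pa_t^{\al_0}R+\pa_t^{\al_0}g$ plus commutator terms $[L_M,\pa_t^{\al_0}]$, so $\widetilde g$ is built out of $\pa_t^{\al_0+1}R$, $\eps^{-1}\nu_M$-weighted $P_1^M\pa_t^{\al_0}R$, $\pa_t^{\al_0}g$, and the commutators (which again cost only $\ka_0$). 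The $\langle v\rangle^{-1/2}$-weighted $L^2$ norm of $\widetilde g/\sqrt{M_-}$ then produces precisely the right-hand side of \eqref{l3es2}: the $\eps^{-1}$ from the collision term gives the $\eps^{-1/2}\|P_1^M\pa_t^{\al_0}R/\sqrt{M_-}\|_{L^2}$ after multiplying by the overall $\sqrt{\eps}$, the source term gives the $\sqrt{\eps}\|\langle v\rangle^{-1/2}\pa_t^{\al_0}g/\sqrt{M_-}\|_{L^2}$ term, and the commutators give the $\ka_0\sqrt{\eps}\|R/\sqrt{M_-}\|_{L^2}$ term. Since Lemma \ref{l3lem} is stated on the whole space, I must also handle the bounded domain: extend $R$ by zero (or by a suitable lifting), which introduces boundary traces on $\ga_\pm$ through the jump in $v\cdot\na_x$ across $\pa\Om$, and these produce the $\sqrt{\eps}\|\pa_t^{\al_0}R/\sqrt{M_-}\|_{L^2(\R_+\times\ga_+)}$, $|\pa_t^{\al_0}r/\sqrt{M_-}|_2$, and initial-data terms $\sqrt{\eps}\|\pa_t^{\al_0}R_0/\sqrt{M_-}\|_{L^2}$, $\sqrt{\eps}\|v\cdot\na_x\pa_t^{\al_0}R_0/\sqrt{M_-}\|_{L^2}$, $\sqrt{\eps}|\pa_t^{\al_0}R_0/\sqrt{M_-}|_{L^2_\ga}$ in \eqref{l3es2}. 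I expect the extension/localization near the boundary to be the main technical obstacle, since Lemma \ref{l3lem} lives on $\R^3$ and one must carefully track the boundary contributions generated when flattening or extending, and ensure the velocity averaging still applies after cutting off in $x$ and $t$.

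\medskip
\noindent\emph{Step 3: the easy bounds \eqref{l3es3}.} For $S_2^{\al_0}$, Cauchy--Schwarz in $v$ against $|\psi_{e,i}|/\sqrt{M_-}$ (which is integrable since $\psi_{e,i}$ decays fast) gives $S_2^{\al_0}(R)(t,x)\lesssim \|P_1^M\pa_t^{\al_0}R(t,x,\cdot)/\sqrt{M_-}\|_{L^2_v}$ pointwise in $(t,x)$, and taking $L^2_{t,x}$ yields the claim; the $\nu_M$-weight can be inserted or removed at the price of the fast decay of $\psi_{e,i}$. For $S_3^{\al_0}$, the same Cauchy--Schwarz in $v$ gives $S_3^{\al_0}(R)(x)\lesssim \|\pa_t^{\al_0}R_0(x,\cdot)/\sqrt{M_-}\|_{L^2_v}\le \|\pa_t^{\al_0}R_0/\sqrt{M_-}\|_\infty$, which is the $L^\infty_x$ bound. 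Assembling Steps 1–3 gives \eqref{abcp1}, \eqref{l3es2} and \eqref{l3es3}, completing the proof.
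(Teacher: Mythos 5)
Your strategy is the same as the paper's --- form the cutoff $R_\de$, extend it to the whole space, apply the averaging Lemma \ref{l3lem}, and read off $[a,b,c]$ from the moments against $\psi_{e,i}$ --- and your Step 3 is exactly the paper's argument for \eqref{l3es3}. Two points, however, need repair. The smaller one: the pointwise bound \eqref{abcp1} does not come from testing the \emph{equation}; it is a direct computation of $\int_{\R^3} R_\de\,\psi_{e,i}/\sqrt{M_-}\,dv$ obtained by inserting $R=P_0^MR+P_1^MR$ into the definition \eqref{Rde}. The cutoffs delete only an $O(\de)$ fraction of the Gaussian moments and $M$ differs from $M_-$ by $O(\ka_0\eps)$, so this integral returns $a_i$ up to $C(\de+\ka_0\eps)\sum_j|a_j|$ (absorbed for $\de$ and $\ka_0\eps$ small), plus the $P_1^MR$- and $R_0$-contributions, i.e.\ $S_2^{\al_0}$ and $S_3^{\al_0}$. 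Note also that the $\psi_{e,i}$ span a perturbation of $\ker L_M$, not of $\ker^{\perp}L_M$ --- they detect the macroscopic part rather than annihilate it --- and that $\pa_t^{\al_0+1}R$ must remain on the left of the transport equation for $\pa_t^{\al_0}R_\de$ as part of $\pa_t\widetilde{R}$: if you place it into $\widetilde{g}$ you would need an $L^2$ bound on $\pa_t^2R$ when $\al_0=1$, which is not available.

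The genuine gap is the extension. Extending by zero makes $\pa_t\overline{R}+v\cdot\na_x\overline{R}$ contain the surface measure $(v\cdot n)\,R|_{\ga}\,dS_x$, which is not an $L^2_{t,x,v}$ function, so Lemma \ref{l3lem} cannot be applied to it. The construction the paper relies on (from \cite[Lemma 3.6]{EGKM-15}) extends $R_\de$ into the thin collar $\Omega_{\tilde{C}\de^{4}}\setminus\overline{\Om}$ by transporting the boundary values along characteristics up to the exit times $t^{*}_{\bf b},t^{*}_{\bf f}$ and then cutting off; the boundary contribution thereby becomes the genuine $L^2$ sources $\bar h_3,\bar h_4$ supported in the collar, whose norms are controlled by the traces of $R$ on $\ga$ and of $R_0$ --- precisely the boundary terms appearing in \eqref{l3es2}. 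The grazing-velocity cutoff $1-\chi(n(x)\cdot v/\de)\chi(\xi(x)/\de)$ in \eqref{Rde} is what keeps these exit times and the associated Jacobians under control. You correctly identify the extension as the main technical obstacle, but the obstacle is left unresolved, and the central estimate on $S_1^{\al_0}$ is not established without it.
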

\begin{proof}
Define
$$
\xi(x)=\left\{\begin{array}{rll}
-\text{dist}(x,\pa\Om),&\ x\in\Om,\\[2mm]
\text{dist}(x,\pa\Om),&\ x\notin\Om,
\end{array}\right.
$$
and
$$
\chi\in C_0^\infty\ \text{such that}\ 0\leq \chi\leq1\ \text{and}\ \chi(x)=\left\{\begin{array}{rll}
1,&\ |x|\leq1/2,\\[2mm]
0,&\ |x|\geq1.
\end{array}\right.
$$
Then, for $(t,x,v) \in \mathbb{R} \times  \bar{\Omega} \times \mathbb{R}^{3}$ and $0 < \delta \ll 1$, set
\begin{equation}\label{Rde}
\begin{split}
R_{\delta}(t,x,v) = \left[1-\chi\left(\frac{n(x) \cdot v}{\delta}\right) \chi \left(\frac{
 \xi(x)}{\delta}\right)\right ]  \chi(\delta|v|) \big\{ \mathbf{1}_{ t\in[0, \infty)}  R(t,x,v)+ \mathbf{1}_{t\in(-\infty,0 ]} \chi(t) R_{0}(x,v) \big\}.
 \end{split}
\end{equation}
We now {\it claim} that
there exists $\overline{R}(t,x,v) \in L^{2}( \mathbb{R} \times   \mathbb{R}^{3} \times \mathbb{R}^{3})$, an extension of
$R_{\delta}$ in \eqref{Rde}, such that
\begin{equation}\label{Rex}
  \overline{R}|_{\Omega \times \mathbb{R}^{3}}= R_{\delta}    \  \text{ and } \  \overline{R}  |_{\gamma}= R_{ \delta} |_{\gamma}   \  \text{ and } \ \overline{R} |_{t=0} = R_{\delta} |_{t=0}.
\end{equation} In addition, in the sense of distributions on $\mathbb{R} \times \mathbb{R}^{3} \times \mathbb{R}^{3}$,
\begin{equation}\label{Rdeex}
\partial_{t} \overline{R}+  v\cdot \nabla_{x} \overline{R} = \bar{h} \eqdef \bar{h}_{1} + \bar{h}_{2} + \bar{h}_{3}+ \bar{h}_{4},
\end{equation}
with
\begin{eqnarray}
\bar{h}_{1} (t,x,v)&=& \mathbf{1}_{(x,v) \in \Omega \times \mathbb{R}^{3}}  \left [1-\chi\left(\frac{n(x) \cdot v}{\delta}\right)  \chi \left( \frac{\xi(x)}{\delta}\right)\right] \chi(\delta|v|)  \notag\\
&& \times
\left[ \mathbf{1}_{t \in [0,\infty)}
\left(g(t,x,v) +\eps^{-1} L_M\overline{R}\right)  + \mathbf{1}_{t \in ( - \infty, 0 ]} \chi(t)\left\{ \frac{ \chi^{\prime} (t)}{\chi(t)} + v \cdot \nabla_{x}\right\} R_{0} (x,v)\right],\notag\\
\bar{h}_{2} (t,x,v) &=&\mathbf{1}_{(x,v) \in \Omega \times \mathbb{R}^{3}}  \left[\mathbf{1}_{t \in [0,\infty)} R(t,x,v) + \mathbf{1}_{t \in (- \infty, 0 ]} \chi(t) R_{0} (x,v)
\right] \notag\\
&& \times
 \{v\cdot \nabla_{x}\} \left(  \left[1-\chi\left(\frac{n(x) \cdot v}{\delta}\right)  \chi \left( \frac{\xi(x)}{\delta}\right) \right] \chi(\delta|v|)\right),\notag\\
\bar{h}_{3} (t,x,v) &=&  \mathbf{1}_{{ (x,v) \in    [\Omega_{\tilde{C} \delta^{4}} \backslash \bar{\Omega}]\times \mathbb{R}^{3}}}   \ \frac{1}{\tilde{C} \delta^{4}}v \cdot \nabla_{x} \xi(x) \chi^{\prime}  \left( \frac{\xi(x)}{\tilde{C} \delta^{4}} \right)\notag \\
 &&     \times
 \bigg[ R_{\delta}(t-  t_{\bf b}^{*} (x,v), x_{\bf b}^{*}(x,v), v) \mathbf{1}_{x_{\bf b}^{*}(x,v) \in \partial\Omega} \notag\\
 && \ \  \   +
   R_{\delta}(t + t_{\bf f}^{*} (x,v),  x_{\bf f}^{*}(x,v), v) \mathbf{1}_{x_{\bf f}^{*}(x,v) \in \partial\Omega}
  \bigg] ,\notag\\
  \bar{h}_{4} (t,x,v) &=&  \mathbf{1}_{{ (x,v) \in    [\Omega_{\tilde{C} \delta^{4}} \backslash \bar{\Omega}]\times
  \mathbb{R}^{3}}} R_{\delta}(t-  t_{\bf b}^{*} (x,v), x_{\bf b}^{*}(x,v), v) \chi \left( \frac{\xi(x)}{\tilde{C} \delta^{4}} \right) \chi^{\prime}(x_{\bf b}^{*}(x,v)) \mathbf{1}_{x_{\bf b}^{*}(x,v) \in \partial\Omega}\notag\\
  &&+ \mathbf{1}_{{ (x,v) \in    [\Omega_{\tilde{C} \delta^{4}} \backslash \bar{\Omega}]
  \times \mathbb{R}^{3}}} R_{\delta}( t + t_{\bf f}^{*} (x,v),  x_{\bf f}^{*}(x,v), v) \chi \left( \frac{\xi(x)}{\tilde{C} \delta^{4}} \right) \chi^{\prime}(x_{\bf f}^{*}(x,v)) \mathbf{1}_{ x_{\bf f}^{*}(x,v) \in \partial\Omega},\notag\\&&\notag
\end{eqnarray}
here
$
\Omega_{\tilde{C}\delta^{4}}=  \ \big\{ x \in \mathbb{R}^{3}:  \  \xi(x) < \tilde{C}\delta ^{4},\ \tilde{C}>0\big\},
$
and
for $(x,v) \in\Omega_{\tilde{C}\delta^{4}}\backslash \overline{\Omega}$, with $\overline{\Omega}=\Omega\cup\pa\Om$,
\begin{eqnarray}
&&t_{\bf b}^{*}(x,v)= \  \inf\{  s>0: 0 <\xi(X(\tau;0,x,v)) <  \tilde{C} \delta^{4}\ \ \text{for all } 0 <\tau< s \}, \ \
t_{\bf f}^{*}(x,v) = \ t_{\bf b}^{*}(x,-v) , \notag\\
&&(x_{\bf b}^{*} (x,v),v ) = (X(- t_{\bf b}^{*}(x,v);0,x,v), v), \notag  \\
&&(x_{\bf f}^{*} (x,v),v)=  (X(  t_{\bf f}^{*}(x,v);0,x,v), v). \notag
\end{eqnarray}
Moreover, it holds that
\begin{eqnarray}\label{h1-4}
\left\{\begin{array}{rlll}
\begin{split}
&\left\| \frac{\langle v\rangle^{-1/2}\bar{h}_{1}}{\sqrt{M_-}} \right\|_{  L^{2}( \mathbb{R} \times   \mathbb{R}^{3} \times \mathbb{R}^{3})}   \lesssim
\left\|\frac{\langle v\rangle^{-1/2}g}{\sqrt{M_-}}\right\|_{L^{2}(
\mathbb{R}_{+} \times
\Omega \times \mathbb{R}^{3})}+\frac{1}{\eps}\left\| \frac{\langle v\rangle^{-1/2}L_M\overline{R}}{\sqrt{M_-}}\right\|_{L^{2}(
\mathbb{R}_{+} \times
\Omega \times \mathbb{R}^{3})}\\&\qquad\qquad\qquad\qquad\qquad\qquad+ \left\| \frac{\overline{R}_{0}}{\sqrt{M_-}} \right\|_{L^{2} (\Omega \times \mathbb{R}^{3})}
 + \left\| \frac{v\cdot \nabla_{x} \overline{R}_{0}}{\sqrt{M_-}} \right\|_{L^{2 } (\Omega \times \mathbb{R}^{3})}
  ,\\
&\left\| \frac{\bar{h}_{2}}{\sqrt{M_-}} \right\|_{L^{2} ( \mathbb{R} \times \mathbb{R}^{3} \times \mathbb{R}^{3})} \lesssim      \left\|\frac{\overline{R}}{\sqrt{M_-}}\right\|_{L^{2}(
\mathbb{R}_{+} \times
 \Omega \times \mathbb{R}^{3})}  + \left\| \frac{\overline{R}_{0}}{\sqrt{M_-}} \right\|_{L^{2} (\Omega \times \mathbb{R}^{3})},  \\
&\left\| \frac{\bar{h}_{3}}{\sqrt{M_-}} \right\|_{L^{2} (  \mathbb{R} \times \mathbb{R}^{3} \times \mathbb{R}^{3})}
+\left\| \frac{\bar{h}_{4}}{\sqrt{M_-}} \right\|_{L^{2} (  \mathbb{R} \times\mathbb{R}^{3} \times \mathbb{R}^{3})}
  \lesssim \left\| \frac{\overline{R}_{\gamma}}{\sqrt{M_-}} \right\|_{L^{2} ( \mathbb{R}_{+} \times \gamma)}
  + \left\| \frac{\overline{R}_{0}}{\sqrt{M_-}} \right\|_{L^{2} (\gamma)}.
\end{split}\end{array}\right.
\end{eqnarray}
The proof for the above {\it claim} is similar and much easier than that of \cite[Lemma 3.6, pp.40]{EGKM-15}, the details will be omitted for brevity.

We next define
\begin{equation}\label{pei}
[\psi_{e,1},\psi_{e,2},\psi_{e,3},\psi_{e,4},\psi_{e,5}]
=\left[\sqrt{M_-},v_1\sqrt{M_-},v_2\sqrt{M_-},v_3\sqrt{M_-},\frac{|v|^2-3}{2}\sqrt{M_-}\right],
\end{equation}
then one has thanks to \eqref{P0R}
\begin{equation*}
\begin{split}
&\int_{\mathbb{R}^{3}} \frac{R_{\delta}}{\sqrt{M_-}} (t,x,v) \psi_{e,i} (v) d v\\
=& \int_{\mathbb{R}^{3}}  \left[ 1- \chi\left( \frac{n(x) \cdot v}{\delta} \right) \chi\left( \frac{\xi(x)}{\delta}\right)\right]
\chi( \delta |v|)\\& \times
\left\{ \mathbf{1}_{t \geq 0} \frac{R_{\delta}}{\sqrt{M_-}}(t,x,v) + \mathbf{1}_{t \leq 0} \chi(t) \frac{R_{0}}{\sqrt{M_-}} (x,v)  \right\}
 \psi_{e,i} (v) d v \\
 =& \mathbf{1}_{t \geq 0}  \int_{\mathbb{R}^{3}}  \left[ 1- \chi\left( \frac{n(x) \cdot v}{\delta} \right) \chi\left( \frac{\xi(x)}{\delta}\right)\right] \chi( \delta |v|)\left\{ P_0^MR + P_1^MR\right\}(t,x,v) \frac{\psi_{e,i}}{\sqrt{M_-}}(v) d v\\
 &+ \mathbf{1}_{t \leq 0}   \int_{\mathbb{R}^{3}}  \left[ 1- \chi\left( \frac{n(x) \cdot v}{\delta} \right) \chi\left( \frac{\xi(x)}{\delta}\right)\right] \chi( \delta |v|)
  \chi(t) \frac{R_{0}}{\sqrt{M_-}}(x,v)\psi_{e,i}(v) d v\\
  =& \mathbf{1}_{t \geq 0}  \int_{\mathbb{R}^{3}}  \left[ 1- \chi\left( \frac{n(x) \cdot v}{\delta} \right) \chi\left( \frac{\xi(x)}{\delta}\right)\right] \chi( \delta |v|)\\&\times
  \left\{ \frac{a(t,x)}{\rho}+\frac{b(t,x)\cdot (v-u)}{\rho\ta}+\frac{c(t,x)}{\rho\ta}\left(\frac{|v-u|^2}
{\ta}-3\right)\right\}M(t,x,v) \frac{\psi_{e,i}}{\sqrt{M_-}}(v) d v\\
  &+\mathbf{1}_{t \geq 0}  \int_{\mathbb{R}^{3}}  \left[ 1- \chi\left( \frac{n(x) \cdot v}{\delta} \right) \chi\left( \frac{\xi(x)}{\delta}\right)\right] \chi( \delta |v|)P_1^MR(t,x,v) \frac{\psi_{e,i}}{\sqrt{M_-}}(v) d v\\
 &+ \mathbf{1}_{t \leq 0}   \int_{\mathbb{R}^{3}}  \left[ 1- \chi\left( \frac{n(x) \cdot v}{\delta} \right) \chi\left( \frac{\xi(x)}{\delta}\right)\right] \chi( \delta |v|)
  \chi(t) \frac{R_{0}}{\sqrt{M_-}}(x,v)\psi_{e,i}(v) d v\\
\geq& \mathbf{1}_{t\geq 0} \left\{a_{i} (t,x)  - C(\delta+\ka_0\eps) \sum_{i=1}^{5} |a_{i} (t,x)| - C_{\delta} \int_{\mathbb{R}^{3}} | P_1^MR (t,x,v) |  \frac{|\psi_{e,i}|}{\sqrt{M_-}}(v) d v\right\}\\
 &- \mathbf{1} _{ t \leq 0} \chi(t) \int_{\mathbb{R}^{3}}  |R_{0} (x,v)| \frac{|\psi_{e,i}|}{\sqrt{M_-}}(v) d v,\ 1\leq i\leq5.
 \end{split}
 \end{equation*}
Here, $[a_1,a_2,a_3,a_4,a_5]=[a,b_1,b_2,b_3,c],$ and the fact that
$$
\int_{|v|\geq 1/\de\ \text{or}\ |n\cdot v|\leq\de/2\ \text{and}\ |\xi(x)|\leq\de/2}| P_0^MR(t,x,v)| \frac{|\psi_{e,i}|}{\sqrt{M_-}}(v) d v
\lesssim O(\delta),\ \ \|[u,\ta]\|_{\infty}\leq \ka_0\eps
$$
was used.

As a consequence, we have
\begin{equation}\label{abcav}
\begin{split}
|a(t,x)|+|b(t,x)|+|c(t,x)|\lesssim& \sum_{i=1}^{5}\left|\int_{\mathbb{R}^{3}} \frac{R_{\delta}}{\sqrt{M_-}} (t,x,v) \psi_{e,i} (v) d v\right|
+\sum_{i=1}^{5}\int_{\mathbb{R}^{3}} | P_1^MR (t,x,v) |  \frac{|\psi_{e,i}|}{\sqrt{M_-}}(v) d v
\\&+\sum_{i=1}^{5}\int_{\mathbb{R}^{3}}  |R_{0} (x,v)| \frac{|\psi_{e,i}|}{\sqrt{M_-}}(v) d v \eqdef S_1^{0}(R)+S_2^{0}(R)+S_3^{0}(R).
 \end{split}
 \end{equation}
We now pay our attention to $S_1^{0}(R)$. Clearly, the function $\bar R$ defined by \eqref{Rex} satisfies
 \[
  \left|\int_{\mathbb{R}^{3}} \frac{R_{\delta} (t,x,v)\psi_{e,i}}{\sqrt{M_-}}  d v\right| \lesssim \left|\int_{\mathbb{R}^{3}} \frac{\bar{R} (t,x,v) \psi_{e,i}}{\sqrt{M_-}} d v\right|,\ x\in\Om,
 \]
for $1\leq i\leq5.$ Then applying Lemma \ref{l3lem} to \eqref{Rdeex}, one has
\begin{equation}\label{Rl3f}
\begin{split}
\sqrt{\eps}\left\|\int_{\mathbb{R}^{3}} \frac{\bar{R} (t,x,v) \psi_{e,i}}{\sqrt{M_-}} d v\right\|_{L^2_tL_x^3}
\lesssim \sqrt{\eps}\sum\limits_{i=1}^4\left\|\frac{\langle v\rangle^{-1/2}\bar{h}_i}{\sqrt{M_-}}\right\|_{L^2_tL_x^2}
\end{split}
\end{equation}
Therefore \eqref{abcp1}, \eqref{abcp2}, \eqref{l3es2} and \eqref{l3es3} with $\al_0=0$ follows from \eqref{h1-4}, \eqref{abcav} and \eqref{Rl3f}, the case that $\al_0=1$ can be proved similarly. This ends the proof of Lemma \ref{l3lem2}.
\end{proof}

\section{Global-in-time existence}
In this section, we will deduce the global existence for the system \eqref{Rib} with the aid of the results obtained in previous sections.
That is we are going to complete
\begin{proof}[The proof of Theorem 1]
We design the following iteration sequence:
\begin{eqnarray}\label{itform}
\left\{\begin{split}
&\pa_tR^{n+1}+v\cdot\na_xR^{n+1}+\frac{1}{\eps}L_MR^{n+1}=
\eps^{1/2}Q(R^{n},R^{n})+Q(R^{n},G)+Q(G,R^{n})\\
&\qquad\qquad\qquad\qquad\qquad\qquad+\eps^{-1/2}Q(G,G)-\eps^{-1/2}(\pa_tG+v\cdot \na_xG+H),\\
&R^{n+1}(0,x,v)= R_0(x,v)=-\eps^{-1/2} G(0,x,v),\\
&R^{n+1}_{-}=P_\ga R^{n+1}+\eps^{-1/2}r,
\end{split}\right.
\end{eqnarray}
with $R^0=R_0(x,v).$ We emphasize that the iteration scheme \eqref{itform} coincides with the linearized equation \eqref{Rln} so that Lemmas \ref{l2es}, \ref{l2-l6} and \ref{l3lem2} and  Proposition \ref{point_dyn} can be directly used. It is convenient to set the following energy functional
\begin{equation*}
\begin{split}
\CE(R)(t)=&\eps^2\sup\limits_{0\leq s\leq t}\left\|w_{\ell}\frac{R(s)}{\sqrt{M_-}}\right\|^2_{\infty}+\eps^3\sup\limits_{0\leq s\leq t}\left\|w_{\ell}\frac{\pa_tR(s)}{\sqrt{M_-}}\right\|^2_{\infty}
\\&+\eps\sup\limits_{0\leq s\leq t}\left\|\frac{P_0^MR(s)}{\sqrt{M_-}}\right\|_6^2
+\sup\limits_{0\leq s\leq t}\sum\limits_{\al_0\leq1}\left\|\frac{\pa_t^{\al_0}R(s)}{\sqrt{M_-}}\right\|_2^2,
\end{split}
\end{equation*}
and the dissipation
\begin{equation*}
\begin{split}
\mathcal {D}(R)(t)=&\eps\sum\limits_{\al_0\leq1}\left\|\frac{P_0^M\pa_t^{\al_0}R}{\sqrt{M_-}}\right\|_2^3
+\eps\sum\limits_{\al_0\leq1}\left\|S_1^{\al_0}(R)\right\|_3^2
+\sum\limits_{\al_0\leq1}\left|\frac{(I-P_\ga)\pa_t^{\al_0}R}{\sqrt{M_-}}\right|_{2}^2
+\eps\sum\limits_{\al_0\leq1}\left|\frac{P_\ga\pa_t^{\al_0}R}{\sqrt{M_-}}\right|_{2}^2
\\&+\frac{1}{\eps}\sum\limits_{\al_0\leq1}\left\|\frac{\sqrt{\nu_M}P_1^M\pa_t^{\al_0}R}{\sqrt{M_-}}\right\|_2^2.
\end{split}
\end{equation*}
For later use, we also define a Banach space
$$
\FX_{\tilde{\de}}=\left\{R~|~\CE(R)(t)+\int_0^t\mathcal {D}(R)(s)ds<\tilde{\de},\ \ \tilde{\de}>0\right\},
$$
associated with the norm
$$
\FX_{\tilde{\de}}(f)(t)=\sup\limits_{0\leq s\leq t}\CE(f)(s)+\int_0^t\mathcal {D}(f)(s)ds.
$$

In light of Lemmas \ref{l2es} and \ref{l2-l6}, we have
\begin{equation}\label{eng1}
\begin{split}
\sup\limits_{0\leq s\leq t}&\sum\limits_{\al_0\leq1}\int_{\Om\times \R^3}\frac{(\pa_t^{\al_0}R^{n+1}(s))^2}{M_-}dxdv+\sup\limits_{0\leq s\leq t}\eps \left\|\left[a^{n+1},b^{n+1},c^{n+1}\right](s)\right\|^2_{6}\\&+\eps\int_0^t\left\|\left[a^{n+1},b^{n+1},c^{n+1}\right](s)\right\|_2^2ds
+\frac{1}{\eps}\sum\limits_{\al_0\leq1}\int_0^t\int_{\Om\times \R^3}\frac{\nu_M(P_1^M\pa_t^{\al_0}R^{n+1}(s))^2}{M_-}dxdvds\\&
+\sum\limits_{\al_0\leq1}\int_0^t\int_{\ga_+}\frac{|(I-P_\ga)\pa_t^{\al_0}R^{n+1}|^2}{M_-}d\ga ds\\
\lesssim& \sum\limits_{\al_0\leq1}\int_{\Om\times \R^3}\frac{(\pa_t^{\al_0}R_0)^2}{M_-}dxdv
+\int_{\ga_+}\frac{|(I-P_\ga)R_0|^2}{M_-}d\ga+\eps^{-1}\int_{\Om\times \R^3}\frac{(P_1^MR_0)^2}{M_-}dxdv\\&+\eps\sum\limits_{\al_0\leq1}\int_0^t\int_{\ga_-} \frac{|\pa_t^{\al_0}r|^2}{M_-} d\ga ds+\eps\sum\limits_{\al_0\leq1}\int_0^t\int_{\Om\times \R^3}\frac{\nu_M^{-1}|\pa_t^{\al_0}\CG|^2}{M_-}dxdvds
\\&+\eps\sup\limits_{0\leq s\leq t}\int_{\Om\times \R^3}\frac{\nu_M^{-1}|\CG(s)|^2}{M_-}dxdv+\sup\limits_{0\leq s\leq t}\|\na_x[u,\ta]\|_{H^1}^2,
\end{split}
\end{equation}
where $\CG=\CG_1+\CG_2$ with
$$
\CG_1=\eps^{1/2}Q(R^{n},R^{n}),\ \ \CG_2=Q(R^{n},G)+Q(G,R^{n})+\eps^{-1/2}Q(G,G)-\eps^{-1/2}(\pa_tG+v\cdot \na_xG+H),
$$
and $[a^{n+1},b^{n+1},c^{n+1}]$ denote the corresponding macroscopic quantities of $R^{n+1}$ defined as \eqref{Rdec}.

Moreover, thanks to Proposition \ref{point_dyn}, one has
\begin{equation}\label{point4}
\begin{split}
 \eps^2\left\|w_\ell \frac{R^{n+1}(t)}{\sqrt{M_-}}\right\|^2_{\infty}
  \lesssim&
   \eps^2\left\|  w_\ell \frac{R_{0}}{\sqrt{M_-}} \right\|^2_{\infty} + \eps\sup_{0 \leq s \leq t}\left\|  w_\ell \frac{r(s)}{\sqrt{M_-}} \right\|^2_{\infty} + \eps^{4} \sup_{0 \leq s \leq t} \left\|  w_{\ell-1} \frac{\CG(s)}{\sqrt{M_-}}\right\|^2_{\infty}\\
  &+  \sup_{0\leq s \leq  t}\eps\left\|[a^{n+1},b^{n+1},c^{n+1}]\right\|^2_{6} 
 + \eps^{-1}  \sup_{0 \leq s \leq  t}\left\|\frac{P_1^M R^{n+1}(s)}{\sqrt{M_-}}\right\|^2_2,
    \end{split}
\end{equation}
and
\begin{equation}\label{point5}
\begin{split}
\eps^3\left\|w_\ell \frac{\pa_tR^{n+1}(t)}{\sqrt{M_-}}\right\|^2_{\infty}
\lesssim&
\eps^3\left\|  w_\ell \frac{\pa_tR_{0}}{\sqrt{M_-}} \right\|_{\infty} + \eps^2 \sup_{0 \leq s \leq t}\left\|  w_\ell \frac{\pa_tr(s)}{\sqrt{M_-}} \right\|^2_{\infty} + \eps^{5} \sup_{0 \leq s \leq t} \left\|  w_{\ell-1} \frac{\pa_t\CG(s)}{\sqrt{M_-}}\right\|^2_{\infty}\\
&+\sup_{0 \leq s \leq  t}\left\|\frac{ \pa_tR^{n+1}(s)}{\sqrt{M_-}}\right\|^2_2
+\ka^2_0\eps^3\sup_{0 \leq s \leq t}\left\|  w_\ell \frac{R^{n+1}(s)}{\sqrt{M_-}} \right\|^2_{\infty}.
\end{split}
\end{equation}
Therefore, a suitable linear combination of \eqref{eng1}, \eqref{point4} and \eqref{point5} yields
\begin{equation}\label{eng2}
\begin{split}
\sup\limits_{0\leq s\leq t}&\int_{\Om\times \R^3}\frac{(\pa_t^{\al_0}R^{n+1}(s))^2}{M_-}dxdv+\sup\limits_{0\leq s\leq t}\eps \left\|[a^{n+1},b^{n+1},c^{n+1}](s)\right\|^2_{6}+\eps\int_0^t\left\|[a^{n+1},b^{n+1},c^{n+1}](s)\right\|_2^2ds\\
&+\eps^2\sup\limits_{0\leq s\leq t}\left\|w_\ell \frac{R^{n+1}(s)}{\sqrt{M_-}}\right\|^2_{\infty}+\eps^3\sup\limits_{0\leq s\leq t}\left\|w_\ell \frac{\pa_tR^{n+1}(s)}{\sqrt{M_-}}\right\|^2_{\infty}
\\
&+\frac{1}{\eps}\sum\limits_{\al_0\leq1}\int_0^t\int_{\Om\times \R^3}\frac{\nu_M(P_1^M\pa_t^{\al_0}R^{n+1}(t))^2}{M_-}dxdvds
+\sum\limits_{\al_0\leq1}\int_0^t\int_{\ga_+}\frac{|(I-P_\ga)\pa_t^{\al_0}R^{n+1}|^2}{M_-}d\ga ds\\
\lesssim& \sum\limits_{\al_0\leq1}\int_{\Om\times \R^3}\frac{(\pa_t^{\al_0}R_0)^2}{M_-}dxdv
+\int_{\ga_+}\frac{|(I-P_\ga)R_0|^2}{M_-}d\ga+\eps^{-1}\int_{\Om\times \R^3}\frac{(P_1^MR_0)^2}{M_-}dxdv
\\&+\eps^2\left\|  w_\ell \frac{R_{0}}{\sqrt{M_-}} \right\|^2_{\infty}+\eps^3\left\|  w_\ell \frac{\pa_tR_{0}}{\sqrt{M_-}} \right\|_{\infty}+ \eps\sup_{0 \leq s \leq t}\left\|  w_\ell \frac{r(s)}{\sqrt{M_-}} \right\|^2_{\infty}
+ \eps^{4} \sup_{0 \leq s \leq t} \left\|  w_{\ell-1} \frac{\CG(s)}{\sqrt{M_-}}\right\|^2_{\infty}\\&+ \eps^2 \sup_{0 \leq s \leq t}\left\|  w_\ell \frac{\pa_tr(s)}{\sqrt{M_-}} \right\|^2_{\infty}
+ \eps^{5} \sup_{0 \leq s \leq t} \left\|  w_{\ell-1} \frac{\pa_t\CG(s)}{\sqrt{M_-}}\right\|^2_{\infty} +\sum\limits_{\al_0\leq1}\int_0^t\int_{\ga_-} \frac{|\pa_t^{\al_0}r|^2}{M_-} d\ga ds \\&
+\eps\sup\limits_{0\leq s\leq t}\int_{\Om\times \R^3}\frac{\nu_M^{-1}|\CG(s)|^2}{M_-}dxdv
+\eps\sum\limits_{\al_0\leq1}\int_0^t\int_{\Om\times \R^3}\frac{\nu_M^{-1}|\pa_t^{\al_0}\CG|^2}{M_-}dxdvds+\sup\limits_{0\leq s\leq t}\|\na_x[u,\ta]\|_{H^1}^2.
\end{split}
\end{equation}
In addition, we get from Lemma \ref{l3lem2} that
\begin{equation}\label{l3es4}
\begin{split}
\eps\sum\limits_{\al_0\leq1}&\left\|S_1^{\al_0}(R^{n+1})\right\|^2_{L^2_tL^3_x}\\
\lesssim&
\eps\sum\limits_{\al_0\leq1}\left\|\frac{\pa^{\al_0}_tR_0}{\sqrt{M_-}}\right\|^2_{L^2_{x,v}}
+\eps\sum\limits_{\al_0\leq1}\left\|\frac{v\cdot\na_x\pa^{\al_0}_tR_0}{\sqrt{M_-}}\right\|^2_{L^2_{x,v}}
\\&+
 \eps\sum\limits_{\al_0\leq1}\left\|\langle v\rangle^{-1}\frac{\pa_t^{\al_0}\CG}{\sqrt{M_-}}\right\|^2_{L^2_{t,x,v}}
 +\frac{1}{\eps}\sum\limits_{\al_0\leq1}\left\|\frac{P_1^M\pa_t^{\al_0}R^{n+1}}{\sqrt{M_-}}\right\|^2_{L^2_{t,x,v}}
 +\ka^2_0\eps\sum\limits_{\al_0\leq1}\left\|\frac{\pa_t^{\al_0}R^{n+1}}{\sqrt{M_-}}\right\|^2_{L^2_{t,x,v}}\\
 &+\eps\sum\limits_{\al_0\leq1}\left\|\frac{\pa_t^{\al_0}R^{n+1}}{\sqrt{M_-}}\right\|^2_{L^2(\R_+\times\ga_+)}
 +\sum\limits_{\al_0\leq1}\left\|\frac{\pa_t^{\al_0}r}{\sqrt{M_-}}\right\|^2_{L^2(\R_+\times\ga_-)}
 +\eps\sum\limits_{\al_0\leq1}\left|\frac{\pa^{\al_0}_tR_0}{\sqrt{M_-}}\right|^2_{2}.
\end{split}
\end{equation}
For the boundary term in the right hand side of \eqref{l3es4}, the trace Lemma \ref{traceth} implies
\begin{equation}\label{bdt}
\begin{split}
 \sum\limits_{\al_0\leq1}&\int_0^t\left |  \frac{P_{\gamma}\pa_t^{\al_0}R^{n+1}}{\sqrt{M_-}}\right|_{2,+}^{2}ds\\  \lesssim&   \sum\limits_{\al_0\leq1}\int_0^t\left| \mathbf{1}_{\gamma_{+}^{\delta}} \frac{P_{\gamma} \pa_t^{\al_0}R^{n+1}}{\sqrt{M_-}} \right|_{2,+}^{2}ds
 +\sum\limits_{\al_0\leq1}\int_0^t\left| \frac{(I- P_{\gamma})\pa_t^{\al_0} R^{n+1}}{\sqrt{M_-}} \right|_{2,+}^{2}ds\\
 \lesssim&  \sum\limits_{\al_0\leq1}\int_0^t\left| \mathbf{1}_{\gamma_{+}^{\delta}}  \frac{ \pa_t^{\al_0}R^{n+1}}{\sqrt{M_-}} \right|_{2,+}^{2}ds
 + \sum\limits_{\al_0\leq1}\int_0^t\left| \frac{(I- P_{\gamma})\pa_t^{\al_0} R^{n+1}}{\sqrt{M_-}} \right|_{2,+}^{2}ds
 \\
\lesssim & \sum\limits_{\al_0\leq1}\left\| \frac{\pa_t^{\al_0}R_0}{\sqrt{M_-}} \right\|_{2}^{2}+
\sum\limits_{\al_0\leq1}\int_0^t\left\| \frac{\pa_t^{\al_0}R^{n+1}}{\sqrt{M_-}} \right\|_{2}^{2}ds
 \\&+ \eps^{-1}\sum\limits_{\al_0\leq1}\int_0^t\left \|( L_M \pa_t^{\al_0}P_1^M R^{n+1}) \frac{\pa_t^{\al_0}R^{n+1}}{\sqrt{M_-}} \right\|_{1}ds+\sum\limits_{\al_0\leq1}\int_0^t\left| \frac{(I- P_{\gamma})\pa_t^{\al_0} R^{n+1}}{\sqrt{M_-}} \right|_{2,+}^{2}ds
  \\&+ \eps^{-1}\sum\limits_{\al_0\leq1}\int_0^t\left \| L_{\pa_tM} R^{n+1} \frac{\pa_t^{\al_0}R^{n+1}}{\sqrt{M_-}} \right\|_{1}ds+ \sum\limits_{\al_0\leq1}\int_0^t\left|\left\langle \pa^{\al_0}_tR^{n+1},\frac{\pa_t^{\al_0}\CG}{\sqrt{M_-}}\right\rangle\right| ds
 \\
\lesssim& \sum\limits_{\al_0\leq1}\left\| \frac{\pa_t^{\al_0}R_0}{\sqrt{M_-}} \right\|_{2}^{2}+
\sum\limits_{\al_0\leq1}\int_0^t\left\| \frac{\pa_t^{\al_0}R^{n+1}}{\sqrt{M_-}} \right\|_{2}^{2}ds
+\eps^{-1}\sum\limits_{\al_0\leq1}\int_0^t\left\| \frac{P_1^M\pa_t^{\al_0}R^{n+1}}{\sqrt{M_-}} \right\|_{2}^{2}ds
 \\& +\sum\limits_{\al_0\leq1}\int_0^t\left| \frac{(I- P_{\gamma})\pa_t^{\al_0} R^{n+1}}{\sqrt{M_-}} \right|_{2,+}^{2}ds
  + \eps\sum\limits_{\al_0\leq1}\int_0^t\left\| \frac{\nu_M^{-1/2}\pa_t^{\al_0}\CG}{\sqrt{M_-}}\right\|_{2}^{2}ds.
\end{split}
\end{equation}
Then, on the one hand, combing \eqref{eng2}, \eqref{l3es4} and \eqref{bdt}, we arrive at
\begin{equation}\label{eng3}
\begin{split}
\CE(R^{n+1})(t)&+\int_0^t\mathcal {D}(R^{n+1})(s)ds\\
\lesssim& \sum\limits_{\al_0\leq1}\int_{\Om\times \R^3}\frac{(\pa_t^{\al_0}R_0)^2}{M_-}dxdv
+\int_{\ga_+}\frac{|(I-P_\ga)R_0|^2}{M_-}d\ga+\eps^{-1}\int_{\Om\times \R^3}\frac{(P_1^MR_0)^2}{M_-}dxdv
\\&+\eps^2\left\|  w_\ell \frac{R_{0}}{\sqrt{M_-}} \right\|^2_{\infty}+\eps^3\left\|  w_\ell \frac{\pa_tR_{0}}{\sqrt{M_-}} \right\|_{\infty}+ \eps\sup_{0 \leq s \leq t}\left\|  w_\ell \frac{r(s)}{\sqrt{M_-}} \right\|^2_{\infty}
+ \eps^{4} \sup_{0 \leq s \leq t} \left\|  w_{\ell-1} \frac{\CG(s)}{\sqrt{M_-}}\right\|^2_{\infty}\\&+ \eps^2 \sup_{0 \leq s \leq t}\left\|  w_\ell \frac{\pa_tr(s)}{\sqrt{M_-}} \right\|^2_{\infty}
+ \eps^{5} \sup_{0 \leq s \leq t} \left\|  w_{\ell-1} \frac{\pa_t\CG(s)}{\sqrt{M_-}}\right\|^2_{\infty} +\sum\limits_{\al_0\leq1}\int_0^t\int_{\ga_-} \frac{|\pa_t^{\al_0}r|^2}{M_-} d\ga ds\\&+\eps\sum\limits_{\al_0\leq1}\int_0^t\int_{\Om\times \R^3}\frac{\nu_M^{-1}|\pa_t^{\al_0}\CG|^2}{M_-}dxdvds+\eps\sup\limits_{0\leq s\leq t}\int_{\Om\times \R^3}\frac{\nu_M^{-1}|\CG(s)|^2}{M_-}dxdv
\\&+\sup\limits_{0\leq s\leq t}\|\na_x[u,\ta]\|_{H^1}^2+\eps\sum\limits_{\al_0\leq1}\|\pa_t^{\al_0}[\rho,\na_x[u_0,\ta_0]]\|_{H^1}^2.
\end{split}
\end{equation}
On the other hand, Lemmas \ref{nopp1}, \ref{sob.ine}, Corollary \ref{inv.L.} and Theorem \ref{NSsol} give rise to
\begin{equation}\label{eng4}
\begin{split}
 \eps\sup_{0 \leq s \leq t}&\left\|  w_\ell \frac{r(s)}{\sqrt{M_-}} \right\|^2_{\infty}
+ \eps^2 \sup_{0 \leq s \leq t}\left\|  w_\ell \frac{\pa_tr(s)}{\sqrt{M_-}} \right\|^2_{\infty} +\sum\limits_{\al_0\leq1}\int_0^t\int_{\ga_-} \frac{|\pa_t^{\al_0}r|^2}{M_-} d\ga ds\\
\lesssim& \eps\sum\limits_{\al_0\leq1}\|\pa^{\al_0}_t\na^2_x[u,\ta]\|_{H^1_{co}}\|\pa^{\al_0}_t\na_x[u,\ta]\|_{H^2_{co}}
+\sum\limits_{\al_0\leq1}\int_0^t\|\pa^{\al_0}_t\na_x[u,\ta]\|_{H^1}^2ds\lesssim \ka_0^2\eps+\ka_0^2,
\end{split}
\end{equation}
and
\begin{equation}\label{eng5}
\begin{split}
\eps^{4} \sup_{0 \leq s \leq t} &\left\|  w_{\ell-1} \frac{\CG(s)}{\sqrt{M_-}}\right\|^2_{\infty}
+ \eps^{5} \sup_{0 \leq s \leq t} \left\|  w_{\ell-1} \frac{\pa_t\CG(s)}{\sqrt{M_-}}\right\|^2_{\infty} +\eps\sum\limits_{\al_0\leq1}\int_0^t\int_{\Om\times \R^3}\frac{\nu_M^{-1}|\pa_t^{\al_0}\CG|^2}{M_-}dxdvds\\
&+\eps\sup\limits_{0\leq s\leq t}\int_{\Om\times \R^3}\frac{\nu_M^{-1}|\CG(s)|^2}{M_-}dxdv\\
\lesssim& \CE^2(R^{n}(t))+\CE(R^{n}(t))\int_0^t\mathcal {D}(R^{n})(s)ds\\&+(\eps+\ka_0)\left(\CE(R^{n}(t))+\int_0^t\mathcal {D}(R^{n})(s)ds\right)
+\eps^2+\ka_0^2.
\end{split}
\end{equation}
Noticing that $R_0=-\eps^{-1/2}G_0$, one also can show that by \eqref{sobinep2} in Lemma \eqref{sob.ine} and Corollary \ref{inv.L.}
\begin{equation}\label{eng51}
\begin{split}
\int_{\ga_+}\frac{|(I-P_\ga)R_0|^2}{M_-}d\ga\lesssim \eps^{-1}\|\na_x[u_0,\ta_0]\|^2_{L^2(\pa\Om)}\lesssim\eps^{-1}\|\na^2_x[u_0,\ta_0]\|_2\|\na_x[u_0,\ta_0]\|_2\lesssim\ka_0^2,
\end{split}
\end{equation}
and
\begin{equation}\label{eng52}
\begin{split}
\eps^{-1}\int_{\Om\times \R^3}\frac{(P_1^MR_0)^2}{M_-}dxdv\lesssim\eps^{-2}\|\na_x[u_0,\ta_0]\|^2_2\lesssim\ka_0^2.
\end{split}
\end{equation}
We now get from \eqref{eng3}, \eqref{eng4}, \eqref{eng5}, \eqref{eng51} and \eqref{eng52} that
\begin{equation}\label{eng6}
\begin{split}
\FX_{\tilde{\de}}(R^{n+1})(t)\lesssim \CE(R_0)+\FX^2_{\tilde{\de}}(R^{n})(t)+(\eps+\ka_0)\FX_{\tilde{\de}}(R^{n})(t)+\eps^2+\ka_0^2+\ka_0^2\eps,
\end{split}
\end{equation}
which further implies $\FX_{\tilde{\de}}(R^{n+1})(t)<\de$ provided $R^{n}\in\FX_{\tilde{\de}}$  and $\CE(R_0)$, $\eps$ and $\ka_0$ are suitably small. Indeed $\CE(R_0)$ is small since $R_0=-\eps^{-1/2}G_0=-\eps^{-1/2}G(\rho_0,u_0,\ta_0)$ and
\begin{equation*}
\begin{split}
\eps\left\|w_{\ell}\frac{R_0}{\sqrt{M_-}}\right\|_{\infty}+\eps^{3/2}\left\|w_{\ell}\frac{\pa_tR_0}{\sqrt{M_-}}\right\|_{\infty}
+\sum\limits_{\al_0\leq1}\left\|\frac{\pa_t^{\al_0}R_0}{\sqrt{M_-}}\right\|_2\leq \ka_0\eps,
\end{split}
\end{equation*}
due to Lemma \ref{sob.ine}.

We as follows prove the strong convergence of the iteration sequence $\{R^{n}\}_{n=0}^{\infty}$ constructed above. To see this,
by taking difference of the equations that $R^{n +1}$ and $R^{n }$ satisfy, we deduce that%
\begin{eqnarray*}
\left\{\begin{array}{rll}
\begin{split}
&\partial_{t}[R^{n+1}-R^{n}]+v\cdot \nabla _{x}[R^{n
+1}-R^{n}]+\frac{1}{\eps}L_M[R^{n+1}-R^{n}]\\& \qquad=\eps^{1/2}\left\{Q (R^{n
}-R^{n -1},R^{n})+Q(R^{n -1},R^{n }-R^{n -1})\right\}\\&\qquad\quad+Q(R^{n}-R^{n-1},G)+Q(G,R^{n}-R^{n-1}), \\
&[R^{n +1}-R^{n }]_{-}=P_{\gamma }[R^{n +1}-R^{n }],
\end{split}
\end{array}\right.
\end{eqnarray*}%
with $R^{n +1}-R^{n }=0$ initially. Repeating the same argument as for obtaining \eqref{eng6}, we
obtain
\begin{equation*}
\FX_{\tilde{\de}}(R^{n+1}-R^{n})(t)\leq C\left\{\FX_{\tilde{\de}}(R^n)+\FX_{\tilde{\de}}(R^{n-1})+\eps^2\right\}\FX_{\tilde{\de}}(R^{n}-R^{n-1})(t).
\end{equation*}
This implies that $\{R^{n}\}_{n=0}^{\infty}$ is a Cauchy sequence in $\FX_{\tilde{\de}}$ for $\tilde{\de}$ suitably small. Moreover, take $R$ as the limit of the sequence $\{R^{n}\}_{n=0}^{\infty}$ in $\FX_{\tilde{\de}}$, then $R$ satisfies
\begin{equation*}
\begin{split}
\CE(R)(t)+\int_0^t\mathcal {D}(R)(s)ds
\leq C\CE(R)(0)+C(\eps^2+\ka_0^2).
\end{split}
\end{equation*}
Finally, \eqref{diff} is valid due to $F-M=\eps G+\eps^{3/2}R$ and the uniqueness and positivity of $F$ follows the same argument as the proof of the
the Theorem 3 in \cite[pp.804]{Guo-2010},
this completes the proof of Theorem \ref{mr}.
\end{proof}

\section{Appendix}
In this final appendix, we collect some significant estimates used in the previous sections.

\begin{lemma}\label{est.nonop}
For any $p\in [1,+\infty]$ and $\mathbbm{a}
\in[0,1]$, there exists a positive constant $C>0$ such
that
\begin{equation*}
\begin{split}
\left\|\frac{\nu_{\widetilde{M}}^{-\mathbbm{a}}(v)
Q(f,g)}{\sqrt{\widetilde{M}}}\right\|_{L^p_v}
\leq C\Bigg\{\left\|
\frac{\nu^{1-\mathbbm{a}}_{\widetilde{M}}(v)f}{\sqrt{\widetilde{M}}} \right\|_{L^p_v}
\left\|
\frac{g}{\sqrt{\widetilde{M}}} \right\|_{L^p_v}
+\left\|
\frac{\nu^{1-\mathbbm{a}}_{\widetilde{M}}(v)g}{\sqrt{\widetilde{M}}} \right\|_{L^p_v}
\left\|
\frac{f}{\sqrt{\widetilde{M}}} \right\|_{L^p_v}\Bigg\},
\end{split}
\end{equation*}
where $\widetilde{M}$ is any Maxwellian such that the above
integrals are well defined.
\end{lemma}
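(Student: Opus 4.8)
The plan is to reduce the weighted $L^p_v$ bilinear estimate for $Q(f,g)$ to the classical Grad-type bilinear estimate for the collision operator, exploiting the fact that $\widetilde M$ is \emph{a} Maxwellian (not necessarily normalized) so that all the computations are essentially those of the standard linearized theory conjugated by $\sqrt{\widetilde M}$. First I would write $f = \sqrt{\widetilde M}\,\widetilde f$ and $g=\sqrt{\widetilde M}\,\widetilde g$, so that the quantities on the right-hand side become $\|\nu_{\widetilde M}^{1-\mathbbm a}\widetilde f\|_{L^p_v}$ etc. Using the change of variables in the collision integral together with the conservation identity $|v_\ast'|^2+|v'|^2=|v_\ast|^2+|v|^2$ (hence $\widetilde M(v_\ast')\widetilde M(v')=\widetilde M(v_\ast)\widetilde M(v)$), one gets
\[
\frac{Q(f,g)}{\sqrt{\widetilde M}}(v)=\int_{\R^3\times\S^2}|(v-v_\ast)\cdot\om|\sqrt{\widetilde M(v_\ast)}\Big\{\widetilde f(v_\ast')\widetilde g(v')-\widetilde f(v_\ast)\widetilde g(v)\Big\}\,dv_\ast d\om .
\]
So the weighted norm is the standard "$\Gamma$-type" bilinear operator applied to $\widetilde f,\widetilde g$, and it suffices to prove
\[
\Big\|\nu_{\widetilde M}^{-\mathbbm a}\,\Gamma(\widetilde f,\widetilde g)\Big\|_{L^p_v}\lesssim \|\nu_{\widetilde M}^{1-\mathbbm a}\widetilde f\|_{L^p_v}\|\widetilde g\|_{L^p_v}+\|\nu_{\widetilde M}^{1-\mathbbm a}\widetilde g\|_{L^p_v}\|\widetilde f\|_{L^p_v},
\]
with $\Gamma(\widetilde f,\widetilde g)=\Gamma_{\mathrm{gain}}-\Gamma_{\mathrm{loss}}$ defined by the bracket above.

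Next I would split $\Gamma=\Gamma_{\mathrm{gain}}-\Gamma_{\mathrm{loss}}$. The loss term is the easy one: $\Gamma_{\mathrm{loss}}(\widetilde f,\widetilde g)(v)=\widetilde g(v)\int|(v-v_\ast)\cdot\om|\sqrt{\widetilde M(v_\ast)}\,\widetilde f(v_\ast)\,dv_\ast d\om$, and by Cauchy–Schwarz in $v_\ast$ (pulling out $\sqrt{\widetilde M(v_\ast)}$ and using the $L^p_v$–$L^{p'}_v$ duality carefully, or more simply Hölder with the Maxwellian weight) one bounds the $v_\ast$–integral by $C\,\langle v\rangle\,\|\nu_{\widetilde M}^{1-\mathbbm a}\widetilde f\|_{L^p_v}$ up to harmless constants; since $\nu_{\widetilde M}(v)\sim\langle v\rangle$, multiplying by $\nu_{\widetilde M}^{-\mathbbm a}$ and taking $L^p_v$ gives the loss contribution $\lesssim\|\nu_{\widetilde M}^{1-\mathbbm a}\widetilde f\|_{L^p_v}\|\widetilde g\|_{L^p_v}$ (one can symmetrize to get the $f\leftrightarrow g$ term as well). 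For the gain term, the workhorse is the Grad estimate: the kernel $k(v,v_\ast')$ obtained after the $\om$–integration is integrable against $\sqrt{\widetilde M}$ with a gain of one power of $\langle v\rangle$, so $\big|\nu_{\widetilde M}^{-\mathbbm a}\Gamma_{\mathrm{gain}}(\widetilde f,\widetilde g)(v)\big|$ is controlled by a convolution-type operator acting on $|\widetilde f|$ and $|\widetilde g|$, which by Young/Minkowski's inequality for integral operators maps $L^p_v\times L^p_v\to L^p_v$ with the stated weight distribution. One has to keep track of where the extra $\langle v\rangle$ lands — it should be attached to the factor carrying the $\nu_{\widetilde M}^{1-\mathbbm a}$ weight — and use $\mathbbm a\in[0,1]$ to ensure $\nu_{\widetilde M}^{-\mathbbm a}\cdot\langle v\rangle^{?}$ has the right sign of exponent.

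The main obstacle I anticipate is getting the endpoint cases $p=1$ and $p=\infty$ uniformly from the same argument, together with making the kernel bounds for $\Gamma_{\mathrm{gain}}$ genuinely uniform over the admissible family of Maxwellians $\widetilde M$ (in the application $\widetilde M=M_{[\rho,u,\ta]}$ with $[\rho,u,\ta]$ in a bounded set with $\ta$ bounded above and below, or $\widetilde M=M_-$). Since the exponential decay rate of $\widetilde M$ is comparable to that of a fixed Gaussian on this family, the constants are uniform, but this should be stated. The cleanest route is to prove the estimate first for a fixed reference Maxwellian using the classical Grad/Caflisch kernel estimates, then note that the collision-kernel computations scale covariantly under the affine change $v\mapsto \sqrt{\ta}\,v+u$, reducing the general case to the reference one with constants depending only on the allowed range of $[\rho,u,\ta]$. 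The bilinear $L^p$ estimate then follows by Minkowski's integral inequality (for $1\le p\le\infty$) applied to the kernel representation of each piece.
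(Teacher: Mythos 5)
The paper does not actually prove Lemma \ref{est.nonop}: it is stated in the appendix without proof, as a standard bilinear estimate for the hard-sphere collision operator (of the type appearing in Guo's work and in \cite{Gla}), so there is no in-paper argument to compare against. Your outline is the standard proof and is essentially correct: the conjugation $f=\sqrt{\widetilde M}\,\widetilde f$, $g=\sqrt{\widetilde M}\,\widetilde g$ together with the collision invariance $\widetilde M(v')\widetilde M(v_\ast')=\widetilde M(v)\widetilde M(v_\ast)$ (valid for any Maxwellian, using both momentum and energy conservation) reduces the claim to the stated bound for $\Gamma(\widetilde f,\widetilde g)$, and the loss term is handled exactly as you say. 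Two points in your sketch need to be made precise. First, for the gain term with $1<p<\infty$ the correct mechanism is not Young's inequality for a fixed kernel but H\"older's inequality with respect to the finite measure $|(v-v_\ast)\cdot\omega|\sqrt{\widetilde M(v_\ast)}\,dv_\ast d\omega$ (of total mass $\sim\langle v\rangle$), followed by the measure-preserving pre--postcollisional change of variables $(v,v_\ast)\mapsto(v',v_\ast')$; likewise, Grad's kernel reduction pertains to the linearized operator $K_M$ (one slot occupied by the Maxwellian) and is not the right tool for the bilinear gain term. Second, the weight distribution you allude to should be stated explicitly: since $\langle v\rangle^{2}\le\langle v'\rangle^{2}+\langle v_\ast'\rangle^{2}$, one has $\langle v\rangle^{1-\mathbbm{a}}\le C\big(\langle v'\rangle^{1-\mathbbm{a}}+\langle v_\ast'\rangle^{1-\mathbbm{a}}\big)$ for $\mathbbm{a}\in[0,1]$, and it is precisely this splitting that produces the asymmetric right-hand side $\|\nu^{1-\mathbbm{a}}\widetilde f\|\,\|\widetilde g\|+\|\widetilde f\|\,\|\nu^{1-\mathbbm{a}}\widetilde g\|$ rather than a product with the weight on both factors. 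With these two steps filled in, and with the observation you already make that the constants are uniform over Maxwellians whose parameters range over the compact set relevant to the paper, the argument is complete.
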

Lemma \ref{est.nonop} implies immediately the following significant estimates.
\begin{lemma}\label{nopp1}
It holds that
\begin{equation*}
\begin{split}
\left\|\frac{w_{\ell-1}
Q(f,g)}{\sqrt{\widetilde{M}}}\right\|_{\infty}
\leq C\left\|
\frac{w_\ell f}{\sqrt{\widetilde{M}}} \right\|_{\infty}
\left\|
\frac{w_\ell g}{\sqrt{\widetilde{M}}} \right\|_{\infty},
\end{split}
\end{equation*}
and for $\ell>-3/2$,
\begin{equation*}
\begin{split}
\left\|\frac{\langle v\rangle^{-1/2}
Q(f,g)}{\sqrt{\widetilde{M}}}\right\|_{2}
\leq C\left\{\left\|
\frac{w_{\ell} f}{\sqrt{\widetilde{M}}} \right\|_{\infty}
\left\|
\frac{\langle v\rangle^{1/2}g}{\sqrt{\widetilde{M}}} \right\|_{2}+\left\|
\frac{w_{\ell} g}{\sqrt{\widetilde{M}}} \right\|_{\infty}
\left\|
\frac{\langle v\rangle^{1/2}f}{\sqrt{\widetilde{M}}} \right\|_{2}\right\}.
\end{split}
\end{equation*}
Moreover, let $P_0^M R$ be given as \eqref{P0R} with $\rho, u$ and $\ta$ satisfying \eqref{apes}
then for
$\ta/2<1$, it also holds
\begin{equation*}
\begin{split}
&\left\|\frac{\langle v\rangle^{-1/2}
Q(P_0^M R,P_0^M R)}{\sqrt{M_-}}\right\|_{2}\\&\quad
\leq C\|[a,b,c]\|_6\left\|S_1^{0}(R)\right\|_3+C\|[a,b,c]\|_\infty\left\|\frac{P_1^MR}{\sqrt{M_-}}\right\|_2
+C\|[a,b,c]\|_2\left\|\frac{R_0}{\sqrt{M_-}}\right\|_{\infty},
\end{split}
\end{equation*}
\begin{equation}\label{noppt}
\begin{split}
&\left\|\frac{\langle v\rangle^{-1/2}
Q(\pa_tP_0^M R,P_0^M R)}{\sqrt{M_-}}\right\|_{2}+
\left\|\frac{\langle v\rangle^{-1/2}
Q(P_0^M R,\pa_tP_0^M R)}{\sqrt{M_-}}\right\|_{2}\\&\quad
\leq C\|[a,b,c]\|_6\left\|S_1^{1}(R)\right\|_3+C\|[a,b,c]\|_\infty\left\|\frac{P_1^M\pa_tR}{\sqrt{M_-}}\right\|_2
+C\|[a,b,c]\|_2\left\|\frac{\pa_tR_0}{\sqrt{M_-}}\right\|_{\infty}
\\&\qquad+
+C\|\pa_t[\rho,u,\ta]\|_{\infty}\|[a,b,c]\|_\infty\|[a,b,c]\|_2,
\end{split}
\end{equation}
\begin{equation*}
\begin{split}
&\left\|\frac{\langle v\rangle^{-1/2}
Q(P_0^M R,P_1^M R)}{\sqrt{M_-}}\right\|_{2}+
\left\|\frac{\langle v\rangle^{-1/2}
Q(P_1^M R,P_0^M R)}{\sqrt{M_-}}\right\|_{2}\\&\qquad
\leq C\left\|\frac{\langle v\rangle^{1/2} P_1^M R}{\sqrt{M_-}}\right\|_2\|[a,b,c]\|_\infty,
\end{split}
\end{equation*}
and
\begin{equation*}
\begin{split}
&\left\|\frac{\langle v\rangle^{-1/2}
Q(\pa_tP_0^M R,P_1^M R)}{\sqrt{M_-}}\right\|_{2}+
\left\|\frac{\langle v\rangle^{-1/2}
Q(P_1^M R,\pa_tP_0^M R)}{\sqrt{M_-}}\right\|_{2}\\&\qquad
\leq C\left\|\frac{\langle v\rangle^{1/2} P_1^M R}{\sqrt{M_-}}\right\|_2\|\pa_t[a,b,c]\|_\infty
+C\|\pa_t[\rho,u,\ta]\|_{\infty}\left\|\frac{\langle v\rangle^{1/2} P_1^M R}{\sqrt{M_-}}\right\|_2\|\pa_t[a,b,c]\|_\infty.
\end{split}
\end{equation*}
\end{lemma}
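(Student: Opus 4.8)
The plan is to split the proof of Lemma~\ref{nopp1} into three blocks. The first two displayed inequalities are the weighted $L^\infty$ and $L^2$ versions of Lemma~\ref{est.nonop}. For the $L^\infty$ bound I would take $p=\infty$ in Lemma~\ref{est.nonop}, write $Q=Q_{\mathrm{gain}}-Q_{\mathrm{loss}}$, bound one argument pointwise in $v$ by its weighted $L^\infty$ norm, and transfer the polynomial weight $w_\ell$ using $|(v-v_\ast)\cdot\omega|\lesssim\langle v\rangle\langle v_\ast\rangle$ together with the energy identity $|v'|^2+|v_\ast'|^2=|v|^2+|v_\ast|^2$; since $\nu_{\widetilde M}\sim\langle v\rangle$ for bounded fluid parameters, this costs exactly one power of $\langle v\rangle$, which is the gap between $w_{\ell-1}$ on the left and $w_\ell$ on the right. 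For the $L^2$ bound I would run the mixed $L^\infty_v$--$L^2_v$ argument: estimating the first factor in the weighted $L^\infty_v$ norm reduces $Q(f,\cdot)$ to a linear operator on the second factor whose loss part is multiplication by a function $\lesssim\langle v\rangle$ and whose gain part is the Grad operator, bounded from $\langle v\rangle^{1/2}L^2_v$ into $\langle v\rangle^{-1/2}L^2_v$ by Lemma~\ref{est.nonop} with $\mathbbm a=1/2$; an integration in $x$ then yields the claimed $L^2_{x,v}$ estimate.

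For the self-interaction $Q(P_0^MR,P_0^MR)$, I would use that by \eqref{P0R} the function $P_0^MR$ is, pointwise in $x$, a polynomial in $v$ with coefficients $[a,b,c](t,x)$ times $M=M_{[\rho,u,\ta]}$, so its $Q$-profile is a finite sum of $Q(p_1M,p_2M)$ carrying a factor $[a,b,c]^2$. Because $\ta/2<1$ on $\FX_\eps$ (Theorem~\ref{NSsol}), $M/\sqrt{M_-}$ and hence each $Q(p_1M,p_2M)/\sqrt{M_-}$ still decays like a Gaussian up to polynomials, so the $L^2$ bilinear bound of the first block applied with reference Maxwellian $M_-$, together with $\|w_\ell P_0^MR/\sqrt{M_-}\|_{L^\infty_v}(x)+\|\langle v\rangle^{1/2}P_0^MR/\sqrt{M_-}\|_{L^2_v}(x)\lesssim|[a,b,c](x)|$, gives the pointwise-in-$x$ bound $\|\langle v\rangle^{-1/2}Q(P_0^MR,P_0^MR)/\sqrt{M_-}\|_{L^2_v}(x)\lesssim|[a,b,c](x)|^2$. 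Then I invoke $|[a,b,c]|\lesssim S_1^0(R)+S_2^0(R)+S_3^0(R)$ from Lemma~\ref{l3lem2}, keep one factor of $[a,b,c]$ and replace the other, and Hölder-split the $L^2_x$ norm with the exponent pairs $(6,3)$, $(\infty,2)$, $(2,\infty)$; the bounds $\|S_2^0\|_{L^2_x}\lesssim\|P_1^MR/\sqrt{M_-}\|_2$ and $\|S_3^0\|_{L^\infty_x}\lesssim\|R_0/\sqrt{M_-}\|_\infty$ from \eqref{l3es3} close it. The $\pa_t$-version \eqref{noppt} is identical using \eqref{pamim}: differentiating $M$ adds terms with coefficients $[a,b,c]\,\pa_t[\rho,u,\ta]$ besides $\pa_t[a,b,c]$, and estimating $\pa_t[a,b,c]$ by $S_1^1+S_2^1+S_3^1$ and $\pa_t[\rho,u,\ta]$ in $L^\infty_x$ produces the extra term $\|\pa_t[\rho,u,\ta]\|_\infty\|[a,b,c]\|_\infty\|[a,b,c]\|_2$.

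For the macro--micro products $Q(P_0^MR,P_1^MR)$, $Q(P_1^MR,P_0^MR)$ and their $\pa_t$-analogues, the symmetric bilinear bound is not directly usable since $P_1^MR$ must stay in an $L^2$ slot; instead I would again factor $P_0^MR=[a,b,c]\cdot(\text{fixed Gaussian--polynomial profiles})$, so that the loss term is $[a,b,c]$ times $P_1^MR$ multiplied by a function $\lesssim\langle v\rangle$ and the gain term is $[a,b,c]$ times the Grad operator applied to $P_1^MR$; both map $\langle v\rangle^{1/2}L^2_v$ into $\langle v\rangle^{-1/2}L^2_v$, so after an $x$-integration one obtains $\lesssim\|[a,b,c]\|_\infty\|\langle v\rangle^{1/2}P_1^MR/\sqrt{M_-}\|_2$. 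For the $\pa_t$-variant, \eqref{pamim} bounds the weighted $L^\infty$ size of $\pa_tP_0^MR$ by $\|\pa_t[a,b,c]\|_\infty+\|[a,b,c]\|_\infty\|\pa_t[\rho,u,\ta]\|_\infty$, giving the two terms on the right of the last line of \eqref{noppt}.

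The main obstacle is not any single estimate but the weight bookkeeping in the first block combined with the Maxwellian comparison: every $v$-profile appearing in blocks two and three is built from $M=M_{[\rho,u,\ta]}$ rather than from $M_-$, and one must verify that each such profile, and its $Q$-transforms, is still $\sqrt{M_-}$-integrable with only polynomial loss. This is exactly the point where the hypothesis $\ta/2<1$ enters, and it is why the last three estimates are stated under that assumption. Everything else reduces to Hölder's inequality in $x$ together with the already established \eqref{abcp1} and \eqref{l3es3} of Lemma~\ref{l3lem2}, with all constants uniform in $\eps$ since Theorem~\ref{NSsol} confines the fluid quantities to a fixed compact set.
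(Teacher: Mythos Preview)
Your proposal is correct and follows essentially the same route as the paper. The only cosmetic difference is that for the macro--macro terms the paper applies Lemma~\ref{est.nonop} directly with $p=2$ and $\mathbbm{a}=1/2$ (pointwise in $x$) to obtain
\[
\left\|\frac{\langle v\rangle^{-1/2}Q(\pa_tP_0^MR,P_0^MR)}{\sqrt{M_-}}\right\|_{L^2_v}^2(x)\lesssim \left\|\frac{\langle v\rangle^{1/2}\pa_tP_0^MR}{\sqrt{M_-}}\right\|_{L^2_v}^2\left\|\frac{\langle v\rangle^{1/2}P_0^MR}{\sqrt{M_-}}\right\|_{L^2_v}^2,
\]
whereas you route through the mixed $L^\infty_v$--$L^2_v$ bound you proved in the first block; since both $L^\infty_v$ and $L^2_v$ norms of $P_0^MR/\sqrt{M_-}$ are controlled by $|[a,b,c](x)|$ once $\ta/2<1$, the two paths land on the same pointwise estimate $|[a,b,c]||\pa_t[a,b,c]|+|[a,b,c]|^2|\pa_t[\rho,u,\ta]|$, and from there your use of \eqref{pamim}, \eqref{abcp1}, \eqref{l3es3} and the H\"older splits $(6,3)$, $(\infty,2)$, $(2,\infty)$ is exactly the paper's argument.
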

\begin{proof}
We only prove \eqref{noppt}, the proofs for the others above are similar.
From \eqref{pamim}, Lemma \ref{l3lem2} and Lemma \ref{est.nonop} with $\mathbbm{a}=1/2$, we see that
\begin{equation*}
\begin{split}
&\left\|\frac{\langle v\rangle^{-1/2}
Q(\pa_tP_0^M R,P_0^M R)}{\sqrt{M_-}}\right\|^2_{2}+
\left\|\frac{\langle v\rangle^{-1/2}
Q(P_0^M R,\pa_tP_0^M R)}{\sqrt{M_-}}\right\|^2_{2}\\&\quad
\lesssim \int_{\Om}\left\|\frac{\langle v\rangle^{1/2}\pa_tP_0^M R}{\sqrt{M_-}}\right\|^2_{L^2_v}\left\|\frac{\langle v\rangle^{1/2}P_0^M R}{\sqrt{M_-}}\right\|^2_{L^2_v}dx
\\&\quad
\lesssim \int_{\Om}|[a,b,c]\pa_t[\rho,u,\ta]|^2|[a,b,c]|^2dx+\int_{\Om}|\pa_t[a,b,c]|^2|[a,b,c]|^2dx
\\&\quad
\lesssim \int_{\Om}|[a,b,c]\pa_t[\rho,u,\ta]|^2|[a,b,c]|^2dx+\int_{\Om}\left[S_1^1(R)+S_2^1(R)+S_3^1(R)\right]^2|[a,b,c]|^2dx\\&\quad
\lesssim\|[a,b,c]\|^2_6\|S_1^1(R)\|^2_3
+\|[a,b,c]\|_\infty\left\|\frac{P_1^MR}{\sqrt{M_-}}\right\|^2_2
+\|[a,b,c]\|^2_2\left\|\frac{\pa_tR_0}{\sqrt{M_-}}\right\|^2_{\infty}
\\&\qquad+\|\pa_t[\rho,u,\ta]\|^2_{\infty}\|[a,b,c]\|^2_\infty\|[a,b,c]\|^2_2,
\end{split}
\end{equation*}
in view of $\dis{\int_{\R^3}}\frac{\langle v-u\rangle^\ell M_{[\rho,u,\ta]}^2}{M_-}dv<\infty$ for $\ell\geq0$ and $\ta/2<1.$
Thus the proof of Lemma \ref{nopp1} is completed.

\end{proof}

\begin{lemma}\label{co.est.}\cite[Lemma 4.2, pp.348]{Liu-Yang-Yu-Zhao-2006}
 If $\frac \theta 2<\widetilde{\theta}$, then there exist two positive constants
$\de_0=\de_0(u,\theta;\widetilde{u},\widetilde{\theta})$
and $\eta_0=\eta_0(u,\theta;\widetilde{u},\widetilde{\theta})$ such that
if $|u-\widetilde{u}|+|\theta-\widetilde{\theta}|<\eta_0$, we have for
$h(v)\in {\mathcal{N}}^\bot$,
\begin{eqnarray*}
-\int_{{\R}^3}\frac{ h L_{M} h}{\widetilde{M}}dv \geq
\de_0\int_{{\R}^3}\frac{\nu_{M}(v)h^2}{\widetilde{M}}dv.
\end{eqnarray*}
Here $M\equiv M_{[\rho,u,\theta]}(v)$, $ \widetilde{M}= \widetilde{M}_{[\widetilde{\rho},\widetilde{u},\widetilde{\theta}]}(v)$
and
$$
{\mathcal{N}}^\bot=\left\{ f(v):\ \ \int_{{\R}^3}\frac{\chi_i(v)f(v)}{M}dv=0,\ i=0,1,2,3,4\right\}.
$$
\end{lemma}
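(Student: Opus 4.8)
The plan is to derive this weighted coercivity from the classical spectral‑gap estimate for the linearized Boltzmann operator in its natural weight $1/M$, and to absorb the change of weight from $1/M$ to $1/\widetilde M$ into a perturbation controlled by the smallness of $\eta_0$ together with the integrability hypothesis $\theta/2<\widetilde\theta$ (equivalently $\int_{\R^3}M^2/\widetilde M\,dv<\infty$). Throughout I write $\|\psi\|_{\nu_M}^2=\int_{\R^3}\nu_M\psi^2\,dv$, and I take the orientation of $L_M=\nu_M-K_M$ for which $\int hL_Mh/M\,dv\ge0$ on the orthogonal complement of $\ker L_M$, which is the form actually invoked in the sequel.

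First I would put the form in symmetrized shape. Setting $\phi=h/\sqrt{\widetilde M}$ and using $L_M=\nu_M-K_M$ together with the representation $K_Mf=\sqrt M\,k_M\!\left(f/\sqrt M\right)$, $k_M=k_{2M}-k_{1M}$, one obtains
\begin{equation*}
\int_{\R^3}\frac{h\,L_M h}{\widetilde M}\,dv=\|\phi\|_{\nu_M}^2-\int_{\R^3}\phi\,\widetilde K_M\phi\,dv ,
\end{equation*}
where $\widetilde K_M$ is the integral operator with kernel $\widetilde k_M(v,v')=\sqrt{\tfrac{\widetilde M(v')\,M(v)}{\widetilde M(v)\,M(v')}}\;k_M(v,v')$. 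Recall that the classical symmetrized operator $\mathcal K_M:=k_M$ satisfies the Grad kernel bound $|k_M(v,v')|\lesssim\big(|v-v'|+|v-v'|^{-1}\big)\exp\!\big\{-\tfrac18|v-v'|^2-\tfrac18\tfrac{(|v|^2-|v'|^2)^2}{|v-v'|^2}\big\}$, uniformly for $(\rho,u,\theta)$ ranging in a compact set, and that $\|\psi\|_{\nu_M}^2-\langle\mathcal K_M\psi,\psi\rangle\ge 2\de\,\|\psi\|_{\nu_M}^2$ for every $\psi$ that is $L^2$‑orthogonal to $\ker L_M/\sqrt M=\mathrm{span}\{\sqrt M,v_1\sqrt M,v_2\sqrt M,v_3\sqrt M,|v|^2\sqrt M\}$.

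Second, I would show that $\widetilde K_M$ is bounded from $L^2$ into $L^2_{\nu_M}$ and that $\widetilde K_M-\mathcal K_M\to0$ in that operator norm as $\eta_0\to0$. Since $|u-\widetilde u|+|\theta-\widetilde\theta|<\eta_0$, the conjugating factor obeys $\sqrt{\tfrac{\widetilde M(v')\,M(v)}{\widetilde M(v)\,M(v')}}\le\exp\!\big\{C\eta_0\big(1+|v-v'|\,(|v|+|v'|)\big)\big\}$ and tends to $1$ locally uniformly in $(v,v')$; and the hypothesis $\theta/2<\widetilde\theta$ is exactly what makes the conjugated kernel square‑integrable. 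For $\eta_0$ small this extra exponential is absorbed into the Gaussian in Grad's bound, so $\widetilde k_M$ enjoys an estimate of the same type with slightly worse constants, and splitting $\widetilde k_M$ into a bounded truncated part plus a far‑field remainder yields both the claimed $L^2\to L^2_{\nu_M}$ bound and $\|\nu_M^{-1/2}(\widetilde K_M-\mathcal K_M)\phi\|_2\le\omega(\eta_0)\|\phi\|_2$ with $\omega(\eta_0)\to0$.

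Finally I would close the argument by perturbing the spectral gap. The membership $h\in\mathcal N^\bot$ means precisely that $g:=h/\sqrt M$ is $L^2$‑orthogonal to $\ker L_M/\sqrt M$, while $\phi=g\,\sqrt{M/\widetilde M}$ with $\sqrt{M/\widetilde M}-1$ small in a Gaussian‑weighted norm (again by $\eta_0$ small and $\theta/2<\widetilde\theta$); hence the $L^2$‑projection $P\phi$ of $\phi$ onto $\ker L_M/\sqrt M$ satisfies $\|P\phi\|_{\nu_M}\le\omega(\eta_0)\,\|\phi\|_{\nu_M}$. Combining the identity above with the spectral gap applied to $(I-P)\phi$ and the operator‑norm bound on $\widetilde K_M-\mathcal K_M$,
\begin{align*}
\int_{\R^3}\frac{h\,L_M h}{\widetilde M}\,dv
&=\|\phi\|_{\nu_M}^2-\langle\mathcal K_M\phi,\phi\rangle-\langle(\widetilde K_M-\mathcal K_M)\phi,\phi\rangle\\
&\ge 2\de\,\|(I-P)\phi\|_{\nu_M}^2-C\omega(\eta_0)\,\|\phi\|_{\nu_M}^2
\;\ge\;\big(2\de-C\omega(\eta_0)\big)\,\|\phi\|_{\nu_M}^2
=\big(2\de-C\omega(\eta_0)\big)\!\int_{\R^3}\frac{\nu_M h^2}{\widetilde M}\,dv ,
\end{align*}
so for $\eta_0$ small enough that $C\omega(\eta_0)\le\de$ the estimate follows with $\de_0=\de$. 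The \emph{main obstacle} is the second step — making the conjugation by $\sqrt{\widetilde M/M}$ rigorous, i.e. checking that it keeps $k_M$ simultaneously $\nu_M$‑bounded and close in operator norm — and it is precisely there that both hypotheses are indispensable: $\theta/2<\widetilde\theta$ guarantees integrability of the conjugated kernel, and $|u-\widetilde u|+|\theta-\widetilde\theta|<\eta_0$ guarantees the perturbation is small; everything else is a routine perturbation of the classical spectral‑gap argument.
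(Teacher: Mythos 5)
The paper offers no proof of this lemma at all: it is quoted verbatim from the cited reference \cite{Liu-Yang-Yu-Zhao-2006}, including that reference's sign convention for $L_M$, which is opposite to the one this paper actually uses (compare the definition $-L_Mg=Q(M,g)+Q(g,M)$ and the way the coercivity is invoked in \eqref{col2}); you were right to notice this and normalize the orientation. Your argument is the standard symmetrization-plus-perturbation proof of such weighted coercivity estimates, and it is sound in outline: conjugating by $\sqrt{\widetilde M}$ multiplies Grad's kernel by $\exp\{\tfrac12(\tfrac{1}{2\widetilde\theta}-\tfrac{1}{2\theta})(|v|^2-|v'|^2)+O(\eta_0)|v-v'|\}$, Grad's bound gives decay of order $e^{-\frac{1}{4\theta}\,||v|^2-|v'|^2|}$ by the arithmetic--geometric mean inequality, and $\widetilde\theta>\theta/2$ is precisely the threshold at which the conjugated kernel remains Grad-admissible; the rest is a perturbation of the uniform spectral gap. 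Two points in your sketch deserve tightening. First, in the projection step you should write $\langle\phi,e_i\rangle=\langle\phi\,(1-\sqrt{\widetilde M/M}),e_i\rangle$ and apply Cauchy--Schwarz with $\phi\in L^2$ and $(1-\sqrt{\widetilde M/M})\,e_i\in L^2$, the latter being finite because $(\widetilde M/M)\cdot M=\widetilde M$ is integrable and small by splitting into $|v|\le R$ and $|v|>R$; routing the bound through $\|h/\sqrt M\|_2$, as your phrasing suggests, would require $\widetilde M\lesssim M$ pointwise, which fails when $\widetilde\theta>\theta$. Second, the densities $\rho$ and $\widetilde\rho$ are not assumed close, so $\sqrt{M/\widetilde M}$ tends to $\sqrt{\rho/\widetilde\rho}$ rather than to $1$; this is harmless (a constant multiple of $g$ still has vanishing projection, and the conjugating kernel factor is density-free), but the smallness should be organized around $\sqrt{M/\widetilde M}-\sqrt{\rho/\widetilde\rho}$. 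With those readings the argument closes and reproduces the cited result.
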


\begin{corollary}\label{inv.L.} Under the assumptions in Lemma \ref{co.est.}, we have
for  $h(v)\in {\mathcal{N}}^\bot$,
\begin{eqnarray*}
{\displaystyle\int_{{\R}^3}}\frac{\nu_{M}(v)}{\widetilde{M}}\left|L^{-1}_{M}h\right|^2dv\leq
\de_0^{-2}{\displaystyle\int_{{\R}^3}}\frac{\nu_{M}^{-1}(v)h^2(v)}{\widetilde{M}}dv.
\end{eqnarray*}
\end{corollary}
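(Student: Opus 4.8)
The plan is to set $g=L^{-1}_{M}h$, reduce the claimed bound to the coercivity estimate of Lemma \ref{co.est.} applied to $g$, and close the argument by a weighted Cauchy--Schwarz inequality. First I would record the standard solvability facts for the linearized collision operator about the (local, non-global) Maxwellian $M=M_{[\rho,u,\ta]}$: since $h\in\mathcal{N}^\bot$, the element $g:=L^{-1}_{M}h$ is well defined, again belongs to $\mathcal{N}^\bot$, satisfies $L_{M}g=h$, and lies in the natural energy space, i.e. $\int_{\R^3}\nu_{M}(v)g^2/\widetilde{M}\,dv<\infty$. These are consequences of the Fredholm property and spectral gap of $L_{M}$ on $\mathcal{N}^\bot$ together with the mutual comparability of the weights $M$, $\widetilde{M}$ (and $M_-$) on the relevant velocity range, which is guaranteed by the hypotheses $\theta/2<\widetilde{\theta}$ and $|u-\widetilde{u}|+|\theta-\widetilde{\theta}|<\eta_0$ of Lemma \ref{co.est.}.

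Next I would apply Lemma \ref{co.est.} with $h$ replaced by $g\in\mathcal{N}^\bot$ to obtain
\[
\de_0\int_{\R^3}\frac{\nu_{M}(v)\,g^2}{\widetilde{M}}\,dv\;\le\;-\int_{\R^3}\frac{g\,L_{M}g}{\widetilde{M}}\,dv\;=\;-\int_{\R^3}\frac{g\,h}{\widetilde{M}}\,dv,
\]
and then estimate the right-hand side by inserting $\nu_{M}^{1/2}\nu_{M}^{-1/2}=1$ and using Cauchy--Schwarz in $L^2(\widetilde{M}^{-1}\,dv)$:
\[
\left|\int_{\R^3}\frac{g\,h}{\widetilde{M}}\,dv\right|\le\left(\int_{\R^3}\frac{\nu_{M}\,g^2}{\widetilde{M}}\,dv\right)^{1/2}\left(\int_{\R^3}\frac{\nu_{M}^{-1}\,h^2}{\widetilde{M}}\,dv\right)^{1/2}.
\]
Combining the two displays and dividing by $\big(\int_{\R^3}\nu_{M}g^2/\widetilde{M}\,dv\big)^{1/2}$ --- the case in which this quantity vanishes is trivial, since then $g\equiv0$ and hence $h=L_{M}g\equiv0$ --- yields
\[
\left(\int_{\R^3}\frac{\nu_{M}\,g^2}{\widetilde{M}}\,dv\right)^{1/2}\le\de_0^{-1}\left(\int_{\R^3}\frac{\nu_{M}^{-1}\,h^2}{\widetilde{M}}\,dv\right)^{1/2},
\]
and squaring gives precisely the stated inequality with constant $\de_0^{-2}$, after recalling $g=L^{-1}_{M}h$.

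The only genuinely delicate point --- and the step I would treat most carefully --- is the a priori qualitative information that $g=L^{-1}_{M}h$ belongs to $\mathcal{N}^\bot$ and satisfies $\int_{\R^3}\nu_{M}g^2/\widetilde{M}\,dv<\infty$; this is what legitimizes both the use of Lemma \ref{co.est.} for $g$ and the division in the last step. Everything else is a one-line coercivity estimate followed by Cauchy--Schwarz, so no further work is needed once these mapping properties of $L^{-1}_{M}$ on $\mathcal{N}^\bot$ are in place.
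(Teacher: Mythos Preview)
Your argument is correct and is exactly the standard derivation the paper intends: apply the coercivity of Lemma \ref{co.est.} to $g=L_M^{-1}h\in\mathcal{N}^\bot$, then bound $-\int gh/\widetilde{M}$ by the weighted Cauchy--Schwarz inequality and divide through. The paper gives no separate proof of this corollary, so there is nothing to compare; your write-up, including the remark on the qualitative mapping properties of $L_M^{-1}$ needed to justify the division, is precisely what is required.
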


\begin{lemma}\label{K}
Let $\rho, u$ and $\ta$ satisfy \eqref{apes}, then it holds that
\begin{equation}\label{k.es}
|K_{M,w}h|\leq\frac{C}{1+|v|}\|h\|_{\infty}.
\end{equation}
\end{lemma}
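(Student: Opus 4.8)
The plan is to reduce \eqref{k.es} to a Grad-type integrability estimate for the kernel of $K_{M,w}$. Writing $R=\frac{\sqrt{M_-}}{w_\ell}h$ and unfolding $K_{M,w}h=\frac{w_\ell}{\sqrt{M_-}}K_MR$ together with the identity $K_MR=\sqrt{M}\,k_M\!\big(R/\sqrt M\big)$ and the Grad representation $k_M\phi(v)=\int_{\R^3}\mathbf{k}_M(v,v')\phi(v')\,dv'$ of the operator $k_M=k_{2M}-k_{1M}$ recalled in the excerpt, one obtains the integral form
\begin{equation*}
K_{M,w}h(v)=\int_{\R^3}\mathbf{k}_{M,w}(v,v')\,h(v')\,dv',\qquad
\mathbf{k}_{M,w}(v,v')=\frac{w_\ell(v)}{w_\ell(v')}\,\frac{\sqrt{M(v)}\sqrt{M_-(v')}}{\sqrt{M_-(v)}\sqrt{M(v')}}\,\mathbf{k}_M(v,v').
\end{equation*}
Since $|K_{M,w}h(v)|\le\|h\|_\infty\int_{\R^3}|\mathbf{k}_{M,w}(v,v')|\,dv'$, it suffices to prove that $\sup_{v}(1+|v|)\int_{\R^3}|\mathbf{k}_{M,w}(v,v')|\,dv'\le C$ with $C$ depending only on uniform bounds for $[\rho,u,\ta]$.

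First I would recall the classical Grad bounds for the local Maxwellian $M=M_{[\rho,u,\ta]}$ (see \cite{Gla,Guo-2010,Liu-Yang-Yu-Zhao-2006}): $|\mathbf{k}_{1M}(v,v')|\lesssim|v-v'|\sqrt{M(v)M(v')}$ and
\begin{equation*}
|\mathbf{k}_{2M}(v,v')|\lesssim\frac{1}{|v-v'|}\exp\!\Big(-\frac{|v-v'|^2}{8\ta}-\frac{\big(|v-u|^2-|v'-u|^2\big)^2}{8\ta\,|v-v'|^2}\Big)+|v-v'|\sqrt{M(v)M(v')}.
\end{equation*}
By \eqref{apes}, Lemma~\ref{sob.ine}, and the standing smallness $\|[\rho_0-1,u_0,\ta_0-1]\|_{\FX_\eps}\le\ka_0\eps$, the fluid fields satisfy $\|[\rho-1,u,\ta-1]\|_\infty\lesssim\ka_0\eps$; in particular $\rho,\ta$ have uniform positive lower and upper bounds, $|u|\ll1$, and $\ta<2$ — the last matching the condition $\ta/2<1$ that recurs in the paper (cf.\ Lemma~\ref{co.est.}) and ensuring $\int_{\R^3}M^2/M_-\,dv<\infty$, so that the substitutions below are legitimate.

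The core step is the weight transfer. A direct computation yields
\begin{equation*}
\frac{\sqrt{M(v)}\sqrt{M_-(v')}}{\sqrt{M_-(v)}\sqrt{M(v')}}=\exp\!\Big(\Big(\tfrac14-\tfrac1{4\ta}\Big)\big(|v|^2-|v'|^2\big)+\tfrac{u\cdot(v-v')}{2\ta}\Big),
\end{equation*}
and since $|\tfrac1\ta-1|+|u|\lesssim\ka_0\eps$ the modulus of this exponent is $\lesssim\ka_0\eps\big(|v-v'|^2+(|v|+|v'|)|v-v'|\big)$. The resulting $O(\ka_0\eps)$-rate growth, together with the polynomial ratio $w_\ell(v)/w_\ell(v')\le C\langle v-v'\rangle^{\ell}$ (absorbable into any fixed fraction of a Gaussian), is strictly dominated — in each of the regions $|v'|\le2|v|$ and $|v'|>2|v|$, handled as in Grad's argument — by the $O(1)$-rate Gaussian factors in $\mathbf{k}_M$, provided $\ka_0$ and $\eps$ are small. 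This gives, for some uniform $c>0$,
\begin{equation*}
|\mathbf{k}_{M,w}(v,v')|\lesssim\Big(|v-v'|+\tfrac1{|v-v'|}\Big)\exp\!\Big(-c|v-v'|^2-c\,\frac{\big(|v-u|^2-|v'-u|^2\big)^2}{|v-v'|^2}\Big),
\end{equation*}
and, up to the translation $v\mapsto v-u$, the right side is exactly the kernel handled by Grad: integrating in polar coordinates $v'=v+\eta\omega$ yields $\int_{\R^3}|\mathbf{k}_{M,w}(v,v')|\,dv'\le C/(1+|v|)$, whence $|K_{M,w}h(v)|\le\frac{C}{1+|v|}\|h\|_\infty$, i.e.\ \eqref{k.es}.

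I expect the genuinely delicate point to be not any single inequality but the uniformity bookkeeping: one must verify that the factor coming from replacing the reference Maxwellian $M$ by $M_-$ in the weight — a factor that is honestly exponentially growing in $|v|$ and $|v'|$ once $u\neq0$ or $\ta\neq1$ — is strictly dominated by the fixed Gaussian decay of the Grad kernel $\mathbf{k}_M$, uniformly over the set defined by \eqref{apes}. This is precisely where the smallness $\|[\rho-1,u,\ta-1]\|_{\FX_\eps}\lesssim\ka_0\eps$ furnished by Theorem~\ref{NSsol} (forcing $\ta<2$ and $|u|,|\ta-1|\ll1$) is used; without it the weighted kernel $\mathbf{k}_{M,w}$ need not even be integrable in $v'$.
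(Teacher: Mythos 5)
Your proposal is correct and follows essentially the same route as the paper: unfold $K_{M,w}$ via the Grad kernel representation of $k_M$, bound the kernel by $\{|v-v'|+|v-v'|^{-1}\}$ times the Gaussian factor, absorb the weight ratio $\frac{w_\ell(v)}{w_\ell(v')}\frac{\sqrt{M(v)/M_-(v)}}{\sqrt{M(v')/M_-(v')}}$ (an exponential of rate $O(\ka_0\eps)$ in $(|v|^2-|v'|^2)$ and $u\cdot(v-v')$, exactly the $e^{\pm\varrho(|v|^2-|v'|^2)/(4\ta)}$ factor the paper carries) into the Gaussian, and invoke the Grad/Guo integral estimate $\int(\cdots)\,dv'\le C/(1+|v|)$. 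Your identification of the delicate point — that the exponentially growing weight transfer is controlled only because $|\ta-1|+|u|\lesssim\ka_0\eps$ is smaller than the admissible rate $\varrho$ — is precisely the observation the paper's proof rests on.
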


\begin{proof}
Recall
\begin{equation*}
\begin{split}
K_{M,w}h=&\frac{w_\ell}{\sqrt{M_-}}K_{M}\left(\frac{\sqrt{M_-}h}{w_\ell}\right)
=\frac{w_\ell\sqrt{M}}{\sqrt{M_-}}k_{M}\left(\frac{\sqrt{M_-}h}{\sqrt{M}w_\ell}\right),
\end{split}
\end{equation*}
where
\begin{equation*}
|k_M(v,v^{\prime })|\leq C\{|v-v^{\prime }|+|v-v^{\prime }|^{-1}\}e^{-%
\frac{1}{8\ta}|v-v^{\prime }|^{2}-\frac{1}{8\ta}\frac{||v-u|^{2}-|v^{\prime}-u|^{2}|^{2}}{|v-v^{\prime }|^{2}}}.  
\end{equation*}
Performing the similar calculations as \cite[Lemma 3, pp.727]{Guo-2010}, one can further show that
\begin{equation*}
\int_{\R^3} \{|v-v^{\prime }|+|v-v^{\prime }|^{-1}\}e^{-\frac{1-\varepsilon }{8\ta}%
|v-v^{\prime }|^{2}-\frac{1-\varepsilon }{8\ta}\frac{||v|^{2}-|v^{\prime
}|^{2}|^{2}}{|v-v^{\prime }|^{2}}}\frac{w_\ell(v)e^{\frac{\pm\varrho|v|^2}{4\ta}}}{w_\ell(v^{\prime })e^{\frac{\pm\varrho|v'|^2}{4\ta}}}dv^{\prime}\leq \frac{C}{1+|v|}.  
\end{equation*}
with $\varepsilon\geq0$, $\varrho\geq0$ and small enough.
Then \eqref{k.es} follows from
$$
\frac{\sqrt{M_-}}{\sqrt{M}}=\frac{\rho}{(2\pi)^{3/2}}e^{\frac{-\ta+1}{4\ta}|v|^2-\frac{v\cdot u}{2\ta}+\frac{u^2}{4\ta}},
$$
and $|\ta-1|\lesssim \ka_0\eps<\varrho$. This finishes the proof of Lemma \ref{K}.
\end{proof}

The following lemma is cited from \cite[Lemma 3.2, pp.37]{EGKM-15}.
\begin{lemma} \label{traceth}
Define
\begin{equation*}
\gamma_{\pm}^{\delta} : =
\{ (x,v) \in \gamma_{\pm} :  | n(x)\cdot v | > \delta, \ \ \delta\leq  |v| \leq 1/\de,\ \de>0  \}.
 \end{equation*}
For and $T>0,$ if
$f \in L^{1} ([0, T] \times \Omega \times  \mathbb{R}^{3})$, it holds that
 \begin{eqnarray*}
 \int^{T}_{0} \int_{\gamma_{+}^{\delta }} | f(t,x,v)|d \gamma d t
&\lesssim & \iint_{\Omega \times \mathbb{R}^{3}}  | f(0,x,v)  |  d v d x
+  \int^{T}_{0} \iint_{\Omega \times \mathbb{R}^{3}}   | f(t,x,v) | d v d x d t
\\
& &+    \int^{T}_{0} \iint_{\Omega \times \mathbb{R}^{3}}
 \big| [  \partial_{t} f +  v\cdot \nabla_{x} f ](t,x,v) \big|
d v d x d t .   \nonumber
 \end{eqnarray*}
 \end{lemma}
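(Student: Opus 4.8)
The plan is to combine a renormalization of the free transport equation with the classical change of variables into ``tube'' (flow) coordinates near $\partial\Omega$, using that on the truncated set $\gamma_+^\delta$ the characteristics are uniformly transversal to the boundary. First I would dispose of regularity by a density argument, or equivalently apply the DiPerna--Lions renormalization directly to $\beta_\varsigma(f)=\sqrt{f^2+\varsigma^2}-\varsigma$, which satisfies $(\partial_t+v\cdot\nabla_x)\beta_\varsigma(f)=\frac{f}{\sqrt{f^2+\varsigma^2}}(\partial_t+v\cdot\nabla_x)f$, and let $\varsigma\downarrow 0$. It then suffices to prove the bound for a nonnegative $g$ with $g\ge|f|$ and $(\partial_t+v\cdot\nabla_x)g\le|Hf|$ pointwise, where $Hf:=(\partial_t+v\cdot\nabla_x)f$. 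The second input is geometric: since $\partial\Omega\in C^k$ and on $\gamma_+^\delta$ one has $|n(x)\cdot v|>\delta$ together with $\delta\le|v|\le 1/\delta$, the map $t\mapsto\xi(x-tv)$ decreases at rate $\gtrsim\delta$ at $t=0$ with second derivative $\lesssim\delta^{-2}$; hence there is $c_\delta>0$ with $t_{\mathbf{b}}(x,v)\ge c_\delta$ for every $(x,v)\in\gamma_+^\delta$. Put $\lambda:=\min(c_\delta,T)$.

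Next I would write the Duhamel representation along the free flow, legitimate here because the relevant backward segment of length $\le\lambda\le c_\delta\le t_{\mathbf{b}}(x,v)$ stays inside $\Omega$ on $\gamma_+^\delta$. For $(x,v)\in\gamma_+^\delta$ and $0<t\le\lambda$ I use $g(t,x,v)\le g(0,x-tv,v)+\int_0^t|Hf(\tau,x-(t-\tau)v,v)|\,d\tau$, while for $\lambda<t<T$ I average over the backward shift $s\in(0,\lambda)$ to get $g(t,x,v)\le \lambda^{-1}\int_0^\lambda g(t-s,x-sv,v)\,ds+\int_0^t|Hf(\tau,x-(t-\tau)v,v)|\,d\tau$. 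Integrating against $d\gamma\,dt=|n(x)\cdot v|\,dS_x\,dv\,dt$ and, for each fixed $v$, performing the collar change of variables $(x,s)\mapsto x-sv$ (resp. $(x,\tau)\mapsto x-(t-\tau)v$), whose Jacobian is precisely $|n(x)\cdot v|$, converts each term into an integral over $\Omega$: the first term yields $\iint_{\Omega\times\mathbb{R}^3}|f(0,y,v)|\,dy\,dv$; the averaging term yields $\lambda^{-1}\int_0^T\iint_{\Omega\times\mathbb{R}^3}|f|\,dy\,dv\,dt$ (and is vacuous when $T\le c_\delta$, so the final constant is $T$-independent); and the Duhamel term, after the Fubini swap $\int_0^T\!\int_0^t d\tau\,dt=\int_0^T\!\int_\tau^T dt\,d\tau$ and the substitution $\sigma=t-\tau$, yields $\int_0^T\iint_{\Omega\times\mathbb{R}^3}|Hf|\,dy\,dv\,dt$. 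What makes these changes of variables clean is injectivity: once $s\le t_{\mathbf{b}}(x,v)$ (automatic through the choice of $\lambda$), the arrival point on $\partial\Omega$ is forced to be the \emph{first} exit point of the trajectory issued from the interior point $x-sv$, hence unique, so no multiplicity factor appears; and since each collar is a subset of $\Omega$, upper bounds are preserved when the domain of integration is enlarged to $\Omega$. Summing the three contributions gives the asserted inequality with implied constant $C(\delta,\Omega)$.

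I expect the only genuinely delicate step to be this change-of-variables bookkeeping for a possibly non-convex $\Omega$: one must verify that the restriction $s\le t_{\mathbf{b}}(x,v)$ excludes the ``re-entry'' trajectories that would otherwise destroy injectivity, and that the collar map is a bona fide diffeomorphism onto its image once $\gamma_0$ is removed. The remaining ingredients — the renormalization/density reduction, Kato's inequality, and the tube identity $dS_x\,ds=|n(x)\cdot v|^{-1}\,dy$ — are routine. This is in essence the classical trace theorem for the free transport operator (Ukai's trace theorem); since the statement is exactly \cite[Lemma 3.2]{EGKM-15}, one may also simply quote it.
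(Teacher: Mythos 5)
The paper does not prove this lemma at all: it is stated as a direct citation of \cite[Lemma 3.2]{EGKM-15}, so there is no in-paper argument to compare against. Your proposal supplies the standard proof of the Ukai-type trace estimate, which is essentially the argument behind the cited result, and it is correct in substance: the renormalization reduction, the lower bound $t_{\mathbf{b}}(x,v)\ge c_\delta$ on $\gamma_+^\delta$ (which uses $|\nabla_x\xi|$ bounded below on $\partial\Omega$, implicit in the definition of $n$), the Duhamel bound along characteristics, and the injective tube change of variables with Jacobian $|n(x)\cdot v|$ are exactly the right ingredients, and your injectivity argument (the arrival point is forced to be the first exit point because $s<t_{\mathbf{b}}$) is the correct way to handle non-convex $\Omega$. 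One small imprecision to fix: in the regime $\lambda<t<T$ the source integral should be $\int_{t-\lambda}^{t}|Hf(\tau,x-(t-\tau)v,v)|\,d\tau$ rather than $\int_0^t$, since for $\tau<t-t_{\mathbf{b}}(x,v)$ the point $x-(t-\tau)v$ leaves $\Omega$ and $Hf$ is not defined there; with that restriction the subsequent Fubini and substitution $\sigma=t-\tau$ go through verbatim and still land on $\int_0^T\iint_{\Omega\times\mathbb{R}^3}|Hf|$. With this correction your proof is complete and self-contained, which is arguably more informative than the bare citation the paper gives.
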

The following Lemma is concerned with the classical Sobolev imbedding inequalities.
\begin{lemma}\label{sob.ine.lem}
If $f\in H^1(\Om)$, then it follows for $2\leq p\leq6$
$$\|f\|_{p}\leq C\|f\|_{H^1(\Om)}.$$
\end{lemma}
Next, we
borrow the following anisotropic Sobolev embedding and trace estimates from \cite[Proposition 2.2, pp.316]{Masmoudi-Rousset-2017}.
\begin{lemma}\label{sob.ine}
	Let  $m_1\geq 0,~m_2\geq 0$ be integers, $f\in H^{m_1}_{co}(\Om)$  and   $\nabla f\in H^{m_2}_{co}(\Om)$.\\
Then
{\small	\begin{equation}\label{sobinep1}
	\|f\|^2_{L^\infty}\leq C \|\nabla f\|_{H^{m_2}_{co}}\|f\|_{H^{m_1}_{co}},
	\end{equation}}
	provided $m_1+m_2\geq 3$, and
{\small	\begin{equation}\label{sobinep2}
	|f|^2_{H^{s}(\partial\Om)}\leq C\|\nabla f\|_{H^{m_2}_{co}}\|f\|_{H^{m_1}_{co}},
	\end{equation}}
	for $m_1+m_2\geq 2s\geq 0$.
\end{lemma}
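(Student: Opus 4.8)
The plan is to recall the proof of Proposition 2.2 in \cite{Masmoudi-Rousset-2017}, of which this lemma is a restatement. First I would introduce a partition of unity $\{\chi_0,\chi_1,\dots,\chi_m\}$ subordinate to the covering of $\overline{\Om}$, with $\chi_0$ supported in the interior and each $\chi_i$ ($i\geq1$) supported in the boundary chart $\Om_i=\Om\cap B^0(x_i^0,r_i)$, and write $f=\sum_i\chi_i f$. On the interior piece $\chi_0 f$ the conormal norms are equivalent to the ordinary Sobolev norms, so there $\|\chi_0 f\|_{L^\infty}^2\lesssim\|\chi_0 f\|_{H^{s_1}}\|\chi_0 f\|_{H^{s_2}}$ with $s_1+s_2>3$ is the usual Gagliardo--Nirenberg inequality in $\R^3$, and the trace contribution of $\chi_0 f$ on $\pa\Om$ vanishes. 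Each boundary piece $\chi_i f$ is transported by the map $\Phi$ to the half-space $\R^2_y\times\R_+^z$, in which $Z_i=\pa_{y^i}$ for $i=1,2$, $Z_3=\varphi(z)\pa_z$, and $\pa_z$ commutes with $Z_1,Z_2$.

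In the half-space model the key is an anisotropic splitting of the embedding: a two-dimensional Sobolev inequality in the tangential variable combined with a one-dimensional Agmon-type inequality in the normal variable. For fixed $z$ one has $\|g(\cdot,z)\|_{L^\infty_y}^2\lesssim\|g(\cdot,z)\|_{H^\sigma_y}^2$ for any $\sigma>1$, while for $h=h(z)$ on $\R_+$ one has $\|h\|_{L^\infty_z}^2\lesssim\|h\|_{L^2_z}\big(\|h\|_{L^2_z}+\|\pa_z h\|_{L^2_z}\big)$, which comes from $|h(z)|^2=-\int_z^\infty\pa_{z'}|h(z')|^2\,dz'$. Applying the second inequality to tangential $L^2_y$-norms of $f$---legitimate since $\pa_z$ commutes with $Z_1,Z_2$---and noting that $\pa_z f$ is one component of $\na f$ after the change of variables, I expect to obtain $\|f\|_{L^\infty}^2\lesssim\big(\|\na f\|_{H^{m_2}_{co}}+\|f\|_{H^{m_2}_{co}}\big)\|f\|_{H^{m_1}_{co}}$ once the available tangential derivatives are distributed as $m_1$ on one factor and $m_2$ on the other with $m_1+m_2\geq3$; the case of unequal $m_1,m_2$ follows by interpolating the two-dimensional embedding. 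This would give \eqref{sobinep1}.

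For the trace estimate \eqref{sobinep2} I would run a parallel argument: in the half-space, $|f(y,0)|^2=-\int_0^\infty\pa_z|f(y,z)|^2\,dz\leq2\int_0^\infty|f||\pa_z f|\,dz$, so integrating in $y$ and pulling back gives $|f|_{L^2(\pa\Om)}^2\lesssim\|f\|_{L^2}\|\na f\|_{L^2}$. Acting first with tangential conormal derivatives $Z_1^{a}Z_2^{b}$, which restrict on $\{z=0\}$ to genuine tangential derivatives on $\pa\Om$, yields the integer-order bounds $|f|_{H^k(\pa\Om)}^2\lesssim\big(\|\na f\|_{H^{m_2}_{co}}+\|f\|_{H^{m_2}_{co}}\big)\|f\|_{H^{m_1}_{co}}$ with $m_1+m_2\geq2k$, and the fractional values $s\in(k,k+1)$ follow by interpolation between consecutive integers (together with the standard trace interpolation $H^{s}(\Om)\to H^{s-1/2}(\pa\Om)$), keeping $m_1+m_2\geq2s$.

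The step I expect to be the main obstacle is the bookkeeping of the commutator terms produced by the change of variables near a curved boundary: the contributions of $\pa_i\phi\,\pa_z$ hidden in $Z_i=\pa_i-\pa_i\phi\,\pa_z$, the degeneracy $\varphi(0)=0$ of $Z_3$, and the non-commutation $\pa_zZ_3\neq Z_3\pa_z$ all generate terms of lower conormal order that must be absorbed into the right-hand side. Since $\phi\in C^k$ with $k\geq6$ and $\pa\Om$ is compact, each such term carries bounded coefficients and at least one spare unit of conormal regularity, so these contributions are harmless; the fractional interpolation needed for the trace bound is likewise routine. In this way the lemma reduces to the two elementary one- and two-dimensional inequalities above, exactly as in \cite{Masmoudi-Rousset-2017}.
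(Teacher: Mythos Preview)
The paper does not give its own proof of this lemma; it simply states the result and attributes it to \cite[Proposition 2.2, pp.~316]{Masmoudi-Rousset-2017}. Your proposal correctly identifies this and sketches the argument from that reference (partition of unity, reduction to the half-space, tangential two-dimensional Sobolev embedding combined with a one-dimensional Agmon inequality in the normal variable, and the analogous trace identity), so your approach is exactly in line with what the paper relies on.
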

We now give a Helmoholtz decomposition result cf. \cite[Theorem 2, pp.71]{SB}, which is essentially used in the study of the compressible Navier-Stokes system with Dirichlet boundary condition.
\begin{lemma}[Imposing a boundary condition for divergence free] \label{Hdec}
Let
$\Om\in\R^3$
be a simply connected bounded Lipschitz domain with a smooth boundary. For every vector
field $f\in L^2(\Om;\R^3)$, there exist $\varphi:\Om\rightarrow\R$ and $\psi:\Om\rightarrow\R^3$ such that
$$
f=\na_x\varphi+\na_x\times\psi,\ \varphi\in H^1(\Om;\R),\ \na_x\cdot\psi=0,\ n\cdot(\na_x\times\psi)|_{\pa\Om}=0,
$$
moreover, there exists a constant $C_H > 0$ such that
\begin{align*}
\|\varphi\|_{H^1}+\|\psi\|_{H^1}\leq C_H\|f\|_{2}.
\end{align*}
\end{lemma}
Based on the proof of Lemma \ref{Hdec}, we can show the following $H^{k}_{co}$ boundedness of the divergence free projection operator $P_2$, cf. \cite{BJP}[Lemmas 1, 2, pp.343].
\begin{lemma}\label{P1lem}
Let $k>0$ be any integer,
for every vector
field $f\in H^{k+1}(\Om;\R^3)$, then it holds that
\begin{align}\label{P1bd}
\sum\limits_{|\al|\leq k}\| Z^\al P_2f\|_{H^1}\lesssim
\sum\limits_{|\al|\leq k}\|Z^{\al} f\|_{H^1},
\end{align}
where $P_2$ is the divergence free projection operator defined as in Section \ref{sec2}.
\end{lemma}
\begin{proof}
By Lemma \ref{Hdec}, we have $P_1f=\na_x\varphi$, where $\varphi$ satisfies the Neumann problem
\begin{eqnarray}\label{vp.eq}
\left\{
\begin{array}{rll}
\Delta_x\varphi=&\na_x\cdot f, \ x\in\Om,\\[2mm]
\frac{\pa \varphi}{\pa n}=&f\cdot n, \ x\in\pa\Om.
\end{array}\right.
\end{eqnarray}
Acting $Z^\al$ with $|\al|=1,2,\cdots$ to \eqref{vp.eq}, one has
\begin{eqnarray*}\label{Zvp.eq}
\left\{
\begin{array}{rll}
\Delta_xZ^\al\varphi=&\na_x\cdot Z^\al f+[Z^\al,\na_x\cdot]f-[Z^\al,\Delta_x]\varphi, \ x\in\Om,\\[2mm]
\frac{\pa Z^\al\varphi}{\pa n}=&Z^\al(f\cdot n)-\sum\limits_{|\al'|\leq |\al|-1}C_{\al}^{\al'}Z^{\al'}\na_x\varphi\cdot Z^{\al-\al'}n, \ x\in\pa\Om.
\end{array}\right.
\end{eqnarray*}
Then it follows from \cite{BJP}[Lemmas 1, pp.343] and the trace estimates \eqref{sobinep2} that
\begin{align}\label{Hdecbd}
\|\na_xZ^\al\varphi\|_{H^1}\leq& C(\|\na_x\cdot Z^\al f\|_2+\|[Z^\al,\na_x\cdot]f\|_2+\|[Z^\al,\Delta_x]\varphi\|_2)
\notag\\&+C(|Z^\al f\cdot n|_{H^{\frac{1}{2}}}+{\bf 1}_{\al>0}|Z^{\al'}\na_x\varphi\cdot Z^{\al-\al'}n|_{H^{\frac{1}{2}}})\notag\\
\leq& C\sum\limits_{\al'\leq \al}\|Z^{\al'}f\|_{H^1}+C{\bf 1}_{\al>0}\sum\limits_{\al'<\al}\|\na_xZ^\al\varphi\|_{H^1}.
\end{align}
Next, taking a linear combination of \eqref{Hdecbd} for $|\al|=1,2,\cdots, k$, we obtain
\begin{align*}
\sum\limits_{|\al|\leq k}\|\na_x Z^\al \varphi\|_{H^1}\lesssim
\sum\limits_{|\al|\leq k}\|Z^{\al} f\|_{H^1},
\end{align*}
on the other hand, since
\begin{align*}
\sum\limits_{|\al|\leq k}\| Z^\al\na_x\varphi\|_{H^1}\lesssim& \sum\limits_{|\al|\leq k}\|\na_x Z^\al \varphi\|_{H^1}+\sum\limits_{|\al|\leq k}\| [Z^\al,\na_x]\varphi\|_{H^1}\notag\\
\lesssim& \sum\limits_{|\al|\leq k}\|\na_x Z^\al \varphi\|_{H^1}+\sum\limits_{|\al'|\leq k-1}\| Z^{\al'}\na_x\varphi\|_{H^1}
\lesssim \sum\limits_{|\al|\leq k}\|Z^{\al} f\|_{H^1},
\end{align*}
this together with $Z^\al f=Z^\al \na_x\varphi+Z^\al\na_x\times\psi$ implies \eqref{P1bd}, thus the proof of Lemma \ref{P1lem} is finished.
\end{proof}



\medskip

\noindent {\bf Acknowledgements:}
RJD was supported by the General Research Fund (Project No.~14301719) from RGC of Hong Kong and a direct grant from CUHK. SQL was
supported by grants from the National Natural Science Foundation of China (contracts: 11971201 and 11731008). SQL would thank Prof. Yan Guo for the discussion on the topic. The authors also thank the anonymous referee for improving the presentation of the manuscript, in particular on the proof related to the Helmholtz decomposition.

\end{document}